\def\rg{\hbox to 30pt{\rightarrowfill}}
\def\lg{\hbox to 30pt{\leftarrowfill}}
          \newtheorem{theorem}{Theorem}[section]
      \newtheorem{proposition}[theorem]{Proposition}
      \newtheorem{corollary}[theorem]{Corollary}
      \newtheorem{lemma}[theorem]{Lemma}
      \newtheorem{remark}[theorem]{Remark}
      \newcommand{\BB}{{\mathbb B}}
      \newcommand{\CC}{{\mathbb C}}
      \newcommand{\NN}{{\mathbb N}}
      \newcommand{\ZZ}{{\mathbb Z}}
      \newcommand{\DD}{{\mathbb D}}
      \newcommand{\RR}{{\mathbb R}}
      \newcommand{\FF}{{\mathbb F}}
      \newcommand{\cA}{{\mathcal A}}
      \newcommand{\cB}{{\mathcal B}}
      \newcommand{\cC}{{\mathcal C}}
      \newcommand{\cD}{{\mathcal D}}
      \newcommand{\cE}{{\mathcal E}}
      \newcommand{\cG}{{\mathcal G}}
      \newcommand{\cH}{{\mathcal H}}
      \newcommand{\cK}{{\mathcal K}}
      \newcommand{\cL}{{\mathcal L}}
      \newcommand{\cM}{{\mathcal M}}
      \newcommand{\cN}{{\mathcal N}}
      \newcommand{\cQ}{{\mathcal Q}}
      \newcommand{\cP}{{\mathcal P}}
      \newcommand{\cR}{{\mathcal R}}
      \newcommand{\cS}{{\mathcal S}}
      \newcommand{\cT}{{\mathcal T}}
      \newcommand{\cV}{{\mathcal V}}
      \newcommand{\supp}{\hbox{\rm{supp}}\,}
      \newcommand{\rank}{\hbox{\rm{rank}}\,}
      \newdimen\expt
      \def\boxit#1{\setbox0\hbox{$\displaystyle{#1}$}
            \hbox{\lower.4\expt
       \hbox{\lower3\expt\hbox{\lower\dp0
            \hbox{\vbox{\hrule height.4\expt
       \hbox{\vrule width.4\expt\hskip3\expt
            \vbox{\vskip3\expt\box0\vskip2\expt}%
       \hskip3\expt\vrule width.4\expt}\hrule height.4\expt}}}}}}
\begin{document}
       \pagestyle{myheadings}
      \markboth{ Gelu Popescu}{  Pluriharmonic functions on noncommutative polyballs  }

      \title [ Hyperbolic geometry on noncommutative  polyballs ]
      { Hyperbolic geometry on noncommutative  polyballs }
        \author{Gelu Popescu}
\date{October 30, 2016}
      \thanks{Research supported in part by  NSF grant DMS 1500922}
      \subjclass[2000]{Primary:  47A56; 51M10   Secondary: 46L52; 47A13.}
      \keywords{Noncommutative polyball;  Hyperbolic geometry;  Poincar\' e distance;  Berezin transform; Poisson kernel;  Fock space;    Pluriharmonic function; Free holomorphic function; Schwarz-Pick lemma.
}

      \address{Department of Mathematics, The University of Texas
      at San Antonio \\ San Antonio, TX 78249, USA}
      \email{\tt gelu.popescu@utsa.edu}

\begin{abstract} This paper is an introduction to the hyperbolic geometry of noncommutative polyballs ${\bf B}_{\bf n}(\cH)$ in $B(\cH)^{n_1+\cdots +n_k}$, where  ${\bf n}=(n_1,\ldots, n_k)\in \NN^k$ and $B(\cH)$ is  the algebra
  of all bounded linear operators on a Hilbert space $\cH$.
We use the theory of free pluriharmonic functions on polyballs
   and noncommutative Poisson kernels on tensor products of full Fock spaces to define hyperbolic type metrics on ${\bf B}_{\bf n}(\cH)$, study their properties, and obtain hyperbolic versions of Schwarz-Pick lemma for free holomorphic functions on polyballs. As a consequence, the  polyballs can be viewed as noncommutative hyperbolic spaces. When specialized to the regular polydisk ${\bf D}^k(\cH)$ (which corresponds to the case $n_1=\cdots =n_k=1$), our hyperbolic metric $\delta_H$ is  complete  and invariant under the group $Aut({\bf D}^k)$ of all free holomorphic automorphisms  of ${\bf D}^k(\cH)$, and the $\delta_H$-topology induced on
${\bf D}^k(\cH)$ is the usual operator norm topology. The restriction of $\delta_H$ to the scalar polydisk $\DD^k$ is equivalent to the Kobayashi distance on $\DD^k$.
Most of the results of this paper are presented in the more general setting of Harnack (resp.~Poisson) parts of the closed polyball ${\bf B}_{\bf n}(\cH)^-$.

\end{abstract}

      \maketitle

\section*{Contents}
{\it

\quad Introduction

\begin{enumerate}
   \item[1.]     Harnack and Poisson equivalence relations  on the closed  polyball
\item[2.]  Hyperbolic metric on the Harnack parts of the closed  polyball
\item[3.] A metric   on the  Poisson parts of  the closed polyball
\item[4.] Hyperbolic metric on the regular  polydisk

\end{enumerate}

\quad References

}

\section*{Introduction}

A  theory of free holomorphic functions on noncommutative  polydomains which admit universal operator models has been  developed  in \cite{Po-holomorphic}, \cite{Po-pluriharmonic}, \cite{Po-automorphism}, \cite{Po-Berezin3}, \cite{Po-Berezin-poly}, and \cite{Po-automorphisms-polyball}.  These results played a crucial role in   our work on
  the curvature invariant \cite{Po-curvature-polyball}, the Euler characteristic  \cite{Po-Euler-charact}, and the  group of  free holomorphic automorphisms  on  noncommutative regular polyballs \cite{Po-automorphisms-polyball}. The regular  polyball ${\bf B_n}$, ${\bf n}=(n_1,\ldots, n_k)\in \NN^k$,  is a noncommutative analogue of the scalar polyball \, $(\CC^{n_1})_1\times\cdots \times  (\CC^{n_k})_1$   and  has a universal model $ {\bf S}:=\{{\bf S}_{i,j}\}$ consisting of   left creation
operators acting on the  tensor product $F^2(H_{n_1})\otimes \cdots \otimes F^2(H_{n_k})$ of full Fock spaces.  As a consequence, the theory of free holomorphic functions on noncommutative  polydomains   is related, via noncommutative Berezin transforms, to the study of the operator algebras generated by the universal models associated with the polydomains,  as well as to the theory of functions in several complex variable(\cite{Kr2}, \cite{Ru1}, \cite{Ru2}).  We remark that, in general, one can  view the free holomorphic functions on noncommutative polydomains  as  noncommutative functions in the sense of \cite{KV}.

 Recently \cite{Po-pluriharmonic-polyball}, we  obtained structure theorems characterizing   the  bounded (resp.~positive) free $k$-pluriharmonic functions on regular polyballs. These results will play an important role in the present paper which is  an introduction to the hyperbolic geometry of noncommutative polyballs. The main goal is to
  introduce  hyperbolic type
metrics  on these polyballs, study their basic  properties,  and provide an analogue of  Schwarz-Pick lemma in this setting. As a consequence, the regular polyballs can be viewed as noncommutative hyperbolic spaces.

Poincar\' e's  discovery of a conformally invariant metric on the
open unit disc $\DD:=\{z\in \CC: \ |z|<1\}$  of the complex plane
is at the heart of geometric  function theory. The
hyperbolic (Poincar\' e)  distance is defined on  $\DD$ by
$$
\delta_P(z,w):=\frac{1}{2}\ln
\frac{1+|\varphi_z(w)|}{1-|\varphi_z(w)|},\qquad
z,w\in \DD,
$$
where $\varphi_z$ is the automorphism of $\DD$ given by $\varphi_z(w)=\frac{w-z}{1-\bar z w}$,
 and  it  is invariant under the conformal
automorphisms of $\DD$, i.e.
$$
\delta_P(\varphi(z), \varphi(w))=\delta_P(z,w),\qquad z,w\in \DD,
$$
for all $\varphi\in Aut(\DD)$.
Moreover,
$(\DD, \delta_P)$ is a complete metric space and the $\delta_P$-topology induced on the open disk is the usual
planar topology.  Schwarz-Pick lemma asserts that
any analytic function $f:\DD\to \DD$ is distance-decreasing with respect to $\delta_P$,
i.e.
$$
\delta_P(f(z), f(w))\leq \delta_P(z,w),\qquad z,w\in \DD.
$$
This result has had profound implications in the development of geometric function theory. It has been generalized to higher dimensional complex spaces in various ways (see \cite{Ko1}, \cite{Ko2}).
Bergman (see \cite{Be}) introduced an analogue of the Poincar\' e
distance for the open unit ball
$\BB_m:=\{{\bf z} \in \CC^m:\ \|{\bf z}\|_2<1\}$,
   which has
 properties similar to those of $\delta_P$.  There is
a large literature concerning invariant metrics, hyperbolic
manifolds, and  the geometric viewpoint of complex function theory
(see \cite{Ko1}, \cite{Ko2}, \cite{JP}, \cite{Zhu}, and \cite{Kr1} and the
references there in).
There are  several extensions  of the   Poincar\' e-Bergman distance
and related topics to more general domains.
 We mention the work of  by L. Harris
(\cite{Ha1}, \cite{Ha2}, \cite{Ha3}) in the setting of
$JB^*$-algebras (see also the book by H.~Upmeier \cite{Up}), and the work of I.~Suciu
(\cite{Su2}, \cite{Su3}), Foia\c s (\cite{Fo}), and And\^
o-Suciu-Timotin (\cite{AST}) on Harnack parts   of contractions and
Harnack  type distances between two contractions on Hilbert spaces.
In \cite{Po-hyperbolic} (see also \cite{Po-hyperbolic3}, \cite{Po-hyperbolic2}),  we  introduced a hyperbolic
metric on the noncommutative ball
$$
[B(\cH)^m]_1:=\left\{(X_1,\ldots, X_m)\in B(\cH)^m:\ \|X_1
X_1^*+\cdots +X_mX_m^* \|^{1/2} <1\right\},\qquad m\in \NN,
$$
where $B(\cH)$ denotes  the algebra
  of all bounded linear operators on a Hilbert space $\cH$, which  is a noncommutative extension  of the Poincar\'
  e-Bergman metric on the open unit ball of $\CC^m$.
  We also  obtained a Schwarz-Pick lemma for free
holomorphic functions on $[B(\cH)^m]_1$   with respect to the
hyperbolic metric.

To present our  results, we introduce the  regular polyballs.
  We denote by  $B(\cH)^{n_1}\times_c\cdots \times_c B(\cH)^{n_k}$, where $n_i \in\NN:=\{1,2,\ldots\}$,
   the set of all tuples  ${\bf X}:=({ X}_1,\ldots, { X}_k)$ in $B(\cH)^{n_1}\times\cdots \times B(\cH)^{n_k}$
     with the property that the entries of ${X}_s:=(X_{s,1},\ldots, X_{s,n_s})$  are commuting with the entries of
      ${X}_t:=(X_{t,1},\ldots, X_{t,n_t})$  for any $s,t\in \{1,\ldots, k\}$, $s\neq t$.
  Note that  the operators $X_{s,1},\ldots, X_{s,n_s}$ are not necessarily commuting.
  Let ${\bf n}:=(n_1,\ldots, n_k)$ and define  the polyball
  $${\bf P_n}(\cH):=[B(\cH)^{n_1}]_1\times_c \cdots \times_c [B(\cH)^{n_k}]_1.
  $$
    If $A$ is  a positive invertible operator, we write $A>0$. The {\it regular polyball} on the Hilbert space $\cH$  is defined by
$$
{\bf B_n}(\cH):=\left\{ {\bf X}\in {\bf P_n}(\cH) : \ {\bf \Delta_{X}}(I)> 0  \right\},
$$
where
 the {\it defect mapping} ${\bf \Delta_{X}}:B(\cH)\to  B(\cH)$ is given by
$$
{\bf \Delta_{X}}:=\left(id -\Phi_{X_1}\right)\circ \cdots \circ\left(id -\Phi_{ X_k}\right),
$$
 and
$\Phi_{X_i}:B(\cH)\to B(\cH)$  is the completely positive linear map defined by
$$\Phi_{X_i}(Y):=\sum_{j=1}^{n_i}   X_{i,j} Y X_{i,j} ^*, \qquad Y\in B(\cH).
$$
 Note that if $k=1$, then ${\bf B_n}(\cH)$ coincides with the noncommutative unit ball $[B(\cH)^{n_1}]_1$.
   We remark that the scalar representation of
  the  ({\it abstract})
 {\it regular polyball} ${\bf B}_{\bf n}:=\{{\bf B_n}(\cH):\ \cH \text{\ is a Hilbert space} \}$ is
   ${\bf B_n}(\CC)={\bf P_n}(\CC)= (\CC^{n_1})_1\times \cdots \times (\CC^{n_k})_1$.
Extending Poincar\' e's result \cite{Kr2} that the open ball of $\CC^m$ is not biholomorphic equivalent to the polydisk $\DD^k$, $k\geq 2$, we proved in \cite{Po-automorphisms-polyball} that the noncommutative ball $[B(\cH)^m]_1$ is not free biholomorphic equivalent to the regular polyball ${\bf B_n}(\cH)$, where ${\bf n}=(n_1,\ldots, n_k)$ and $k\geq 2$. As in the classical case (\cite{Ru1}, \cite{Ru2}, \cite{Kr2}), one expects significant differences between the hyperbolic geometry of the ball $[B(\cH)^m]_1$ and that of the regular polyball ${\bf B_n}(\cH)$, and  differences  regarding the theory of free holomorphic (resp. pluriharmonic) functions on these noncommutative domains.

If ${\bf A},{\bf B}\in {\bf B_n}(\cH)^-$, we say that ${\bf A}$ and ${\bf B}$  are {\it Harnack
equivalent} (and denote ${\bf A}\overset{H}{\sim}\, {\bf B}$)  if
there exists $c>  1$ such that
\begin{equation*}
\frac{1}{c^2}F(r{\bf B})\leq  F(r{\bf A})\leq c^2 F(r{\bf B}), \qquad r\in [0,1),
\end{equation*}
 for any positive
free $k$-pluriharmonic function $F:{\bf B}_{\bf n}(\cH)\to B(\cE)\otimes_{min} B(\cH)$, where $\cE$ is a separable Hilbert space, in the sense of \cite{Po-pluriharmonic-polyball}. In this case, we write ${\bf A}\overset{H}{{\underset{c}\sim}}\, {\bf B}$.  The equivalence classes with respect to the equivalence relation
$\overset{H}\sim$ are called Harnack parts of ${\bf B_n}(\cH)^-$.
In Section 1, we prove a Harnack type inequality for
 positive
free $k$-pluriharmonic functions on regular polyballs and use it  to show that the Harnack part containing the zero element coincides with the open polyball ${\bf B_n}(\cH)$.

Given a Harnack part $\Delta$ of ${\bf B_n}(\cH)^-$, we define the map
$\delta_H:\Delta\times \Delta\to \RR^+$ by setting
$$
\delta_H({\bf A}, {\bf B}):=\ln \inf\left\{ c>1:\ {\bf A}\overset{H}{{\underset{c}\sim}}\, {\bf B}\right\}.
$$
In Section 2, we prove that $\delta_H$ is a metric on $\Delta$ and provide a Schwarz-Pick type result for free holomorphic functions on regular polybals with respect to $\delta_H$. We show  that if
$\Phi=(\Phi_1,\ldots, \Phi_m):{\bf B}_{\bf n} (\cH)\to   [B(\cH)^m]_1^-$  is a  free holomorphic function on the regular polyball and
 ${\bf X}, {\bf Y}\in {\bf B_n}(\cH)$,
then $\Phi({\bf X})\,\overset{H}{{ \sim}}\, \Phi({\bf Y})$ and
$$
\delta_H(\Phi({\bf X}), \Phi({\bf Y}))\leq \delta_H({\bf X}, {\bf Y}),
$$
where $\delta_H$ is the hyperbolic  metric  defined on  the Harnack parts of $[B(\cH)^m]_1^-$ and  on the polyball ${\bf B_n}(\cH)$, respectively.
Using the description of the group $Aut({\bf B}_{\bf n})$ of all free holomorphic automorphisms of ${\bf B}_{\bf n}$ (see \cite{Po-automorphisms-polyball}), we prove that
 $$
\delta_H({\bf A}, {\bf B})=\delta_H(\boldsymbol\Psi({\bf A}), \boldsymbol\Psi({\bf B})), \qquad \boldsymbol\Psi\in
Aut({\bf B_n}),
$$
for any
 ${\bf A}, {\bf B} \in {\bf B_n}(\cH)^-$ such that ${\bf A}\overset{H}{\sim}\, {\bf B}$.
In particular, the hyperbolic distance $\delta_H$ on the open polyball is invariant under the automorphism group $Aut({\bf B_n})$.

If ${\bf A}$ and ${\bf B}$  are  in
${\bf B_n}(\cH)^-$, we say that they are {\it Poisson
equivalent} (and denote ${\bf A}\overset{P}{\sim}\, {\bf B}$)  if
there exists $c>  1$ such that
$$\frac{1}{c^2}\boldsymbol{\cP}({\bf R}, r{\bf B})\leq  \boldsymbol{\cP}({\bf R}, r{\bf A})\leq c^2 \boldsymbol{\cP}({\bf R}, r{\bf B}), \qquad r\in [0,1), $$
where ${\bf X}\mapsto \boldsymbol{\cP}({\bf R}, {\bf X})$ is the {\it free pluriharmonic
Poisson kernel} on the regular polyball (see Section 1).
   In this case we write ${\bf A}\overset{P}{{\underset{c}\sim}}\, {\bf B}$.  The equivalence classes with respect to equivalence relation
$\overset{H}\sim$ are called Poisson parts of ${\bf B_n}(\cH)^-$.
We prove in Section 1 that  the Poisson part containing the zero element coincides with the open polyball ${\bf B_n}(\cH)$.

Given a Poisson part $\Delta$ of ${\bf B_n}(\cH)^-$, we define the map
$\delta_\cP:\Delta\times \Delta\to \RR^+$ by setting
$$
\delta_\cP({\bf A}, {\bf B}):=\ln \inf\left\{ c>1:\ {\bf A}\overset{P}{{\underset{c}\sim}}\, {\bf B}\right\}.
$$
In Section 3, we prove that $\delta_\cP$ is a metric on $\Delta$ and obtain an explicit  formula for it (see Theorem \ref{dp})  in terms of certain noncommutative Cauchy kernels acting on tensor products of full Fock spaces.
Moreover, we prove that $\delta_\cP$ is a complete metric on ${\bf B_n}(\cH)$ and that the $\delta_\cP$-topology coincides with the operator norm topology on ${\bf B_n}(\cH)$.

In Section 4, we consider the regular polydisk ${\bf D}^k(\cH):={\bf B}_{(1,\ldots, 1)}(\cH)$, which consists of all tuples ${\bf X}=(X_1,\ldots, X_k)$ of commuting strict contractions such that ${\bf \Delta_{X}}(I)>0$. We remark that in this case we have
$$
{\bf \Delta_{X}}(I)=\sum_{p_1,\ldots, p_k\in \{0,1\}} (-1)^{p_1+\cdots +p_k} T_1^{p_1}\cdots T_k^{p_k} (T_k^*)^{p_k}\cdots( T_1^*)^{p_1}
$$
and  ${\bf D}^k(\CC)=\DD^k$. Using a characterization of positive free $k$-pluriharmonic functions on regular polydisks (see Theorem \ref{cp}) and the results of the previous sections, we prove that    $ {\bf A}\overset{H}{{\underset{c}\sim}}\, {\bf B}$ if and only if $ {\bf A}\overset{P}{{\underset{c}\sim}}\, {\bf B}$,  for any
 ${\bf A}, {\bf B} \in {\bf D}^k(\cH)^-$. Consequently, the metrics $\delta_H$ and $\delta_\cP$ coincide on the Harnack (resp.~Poisson) parts of $ {\bf D}^k(\cH)^-$.   We show that the hyperbolic metric $\delta_H$ is complete on
 the regular polydisk and the $\delta_H$-topology coincides with the operator norm topology on ${\bf D}^k(\cH)$. As a consequence, $({\bf D}^k(\cH), \delta_H)$ is a complete hyperbolic space. Moreover,  $\delta_H$ is invariant under the automorphism group $Aut({\bf D}^k)$ and
  $$
\delta_H(f({\bf X}), f({\bf Y}))\leq \delta_H({\bf X}, {\bf Y}),\qquad {\bf X}, {\bf Y}\in {\bf D}^k(\cH)
$$
for any   free holomorphic function $f:{\bf D}^k(\cH)\to [B(\cH)^m]_1$.
Therefore, the hyperbolic metric $\delta_H$ on ${\bf D}^k(\cH)$ has similar properties  to those of the Poincar\' e distance on the open unit disc $\DD$.
 In addition, we prove that if
  ${\bf A}$ and ${\bf B}$ are   in
 ${\bf D}^k(\cH)$, then
$$
\delta_H({\bf A}, {\bf B})=\ln \max \left\{\left\|C_{\bf A}({\bf R})C_{\bf B}({\bf R})^{-1}\right\|, \left\|C_{\bf B}({\bf R})C_{\bf A}({\bf R})^{-1}\right\|\right\},
$$
where
$$C_{\bf X}({\bf R}):=(I\otimes \boldsymbol\Delta_{\bf X}(I)^{1/2})\prod_{i=1}^k(I-R_{i}\otimes
X_{i}^*), \qquad {\bf X}=(X_1,\ldots, X_k)\in {\bf D}^k(\cH), $$
and $R_1,\ldots R_k$ are the shift operators on the Hardy space $H^2(\DD^k)$.
 In particular, we show that
$\delta_H|_{\DD^k\times \DD^k}$ is equivalent to the  Kobayashi distance on the polydisk $\DD^k$  (see \cite{JP}) and
$$\delta_H({\bf z}, {\bf w})=\frac{1}{2} \ln \frac{\prod_{i=1}^k\left(1+|\psi_{z_i}(w_i)|\right)}
{\prod_{i=1}^k\left(1-|\psi_{z_i}(w_i)|\right)}
$$
for any ${\bf z}=(z_1,\ldots, z_k)$ and ${\bf w}=(w_1,\ldots, w_k)$ in $\DD^k$, where $\psi_{\bf z}:=(\psi_{z_1},\ldots, \psi_{z_n})$ is the involutive automorphisms of $\DD^k$ such that
$\psi_{z_i}(0)=z_i$ and $\psi_{z_i}(z_i)=0$.

We remark that, according to  the results of the present paper and  those from \cite{Po-hyperbolic}, the metrics $\delta_H$ and $\delta_\cP$ coincide   for the regular polydisk ${\bf D}^k(\cH)$ and  for  the noncommutative  ball $[B(\cH)^{n_1}]_1$. It remains an open problem whether the same result is true for any regular polyball.
On the other hand, it will be interesting to see to what extent these results extend to more general polydomains in $B(\cH)^m$ such as those studied in  \cite{Po-Berezin3} and \cite{Po-Berezin-poly}.

\bigskip
\section{Harnack and Poisson  equivalences on the closed  polyball}

In this section, we recall some basic facts concerning the noncommutative Berezin transforms on polyballs and introduce the Harnack  and the  Poisson equivalence relations on the closed polyball ${\bf B}_{\bf n}(\cH)^-$. We provide a Harnack type inequality for positive free $k$-pluriharmonic functions  and use it to show that the Harnack (resp.~Poisson) equivalence class containing the zero element  coincides with the open polyball  ${\bf B}_{\bf n}(\cH)$.

Let $H_{n_i}$ be
an $n_i$-dimensional complex  Hilbert space with orthonormal basis $e^i_1,\ldots, e^i_{n_i}$.
  We consider the {\it full Fock space}  of $H_{n_i}$ defined by
$F^2(H_{n_i}):=\CC 1 \oplus\bigoplus_{p\geq 1} H_{n_i}^{\otimes p},$
where  $H_{n_i}^{\otimes p}$ is the
(Hilbert) tensor product of $p$ copies of $H_{n_i}$. Let $\FF_{n_i}^+$ be the unital free semigroup on $n_i$ generators
$g_{1}^i,\ldots, g_{n_i}^i$ and the identity $g_{0}^i$.
  Set $e_\alpha^i :=
e^i_{j_1}\otimes \cdots \otimes e^i_{j_p}$ if
$\alpha=g^i_{j_1}\cdots g^i_{j_p}\in \FF_{n_i}^+$
 and $e^i_{g^i_0}:= 1\in \CC$.
  The length of $\alpha\in
\FF_{n_i}^+$ is defined by $|\alpha|:=0$ if $\alpha=g_0^i$  and
$|\alpha|:=p$ if
 $\alpha=g_{j_1}^i\cdots g_{j_p}^i$, where $j_1,\ldots, j_p\in \{1,\ldots, n_i\}$.
 We  define
 the {\it left creation  operator} $S_{i,j}$ acting on the  Fock space $F^2(H_{n_i})$  by setting
$
S_{i,j} e_\alpha^i:=  e^i_j\otimes e^i_{ \alpha}$, $\alpha\in \FF_{n_i}^+,
$
 and
  the operator  ${\bf S}_{i,j}$ acting on the Hilbert tensor  product
$F^2(H_{n_1})\otimes\cdots\otimes F^2(H_{n_k})$ by setting
$${\bf S}_{i,j}:=\underbrace{I\otimes\cdots\otimes I}_{\text{${i-1}$
times}}\otimes \,S_{i,j}\otimes \underbrace{I\otimes\cdots\otimes
I}_{\text{${k-i}$ times}},
$$
where  $i\in\{1,\ldots,k\}$ and  $j\in\{1,\ldots,n_i\}$.  We denote ${\bf S}:=({\bf S}_1,\ldots, {\bf S}_k)$, where  ${\bf S}_i:=({\bf S}_{i,1},\ldots,{\bf S}_{i,n_i})$, or ${\bf S}:=\{{\bf S}_{i,j}\}$.   The noncommutative Hardy algebra ${\bf F}_{\bf n}^\infty$ (resp.~the polyball algebra $\boldsymbol\cA_{\bf n}$) is the weakly closed (resp.~norm closed) non-selfadjoint  algebra generated by $\{{\bf S}_{i,j}\}$ and the identity.
Similarly, we   define
 the {\it right creation  operator} $R_{i,j}:F^2(H_{n_i})\to
F^2(H_{n_i})$     by setting
 $
R_{i,j} e_\alpha^i:=  e^i_ {\alpha }\otimes e^i_j$ for $ \alpha\in \FF_{n_i}^+,
$
 and
  the operator  ${\bf R}_{i,j}$ acting on the Hilbert tensor  product
$F^2(H_{n_1})\otimes\cdots\otimes F^2(H_{n_k})$ by setting
$${\bf R}_{i,j}:=\underbrace{I\otimes\cdots\otimes I}_{\text{${i-1}$
times}}\otimes \,R_{i,j}\otimes \underbrace{I\otimes\cdots\otimes
I}_{\text{${k-i}$ times}}.
$$
  The polyball algebra $\boldsymbol\cR_{\bf n}$ is the norm closed non-selfadjoint  algebra generated by $\{{\bf R}_{i,j}\}$ and the identity.

We recall (see \cite{Po-poisson}, \cite{Po-Berezin-poly}) some basic properties for  the   noncommutative Berezin     transforms associated  with regular polyballs.
 Let  ${\bf X}=({ X}_1,\ldots, { X}_k)\in {\bf B_n}(\cH)^-$ with $X_i:=(X_{i,1},\ldots, X_{i,n_i})$.
We  use the notation
$X_{i,\alpha_i}:=X_{i,j_1}\cdots X_{i,j_p}$
  if  $\alpha_i=g_{j_1}^i\cdots g_{j_p}^i\in \FF_{n_i}^+$ and
   $X_{i,g_0^i}:=I$.
The {\it noncommutative Berezin kernel} associated with any element
   ${\bf X}$ in the noncommutative polyball ${\bf B_n}(\cH)^-$ is the operator
   $${\bf K_{X}}: \cH \to F^2(H_{n_1})\otimes \cdots \otimes  F^2(H_{n_k}) \otimes  \overline{{\bf \Delta_{X}}(I) (\cH)}$$
   defined by
   $$
   {\bf K_{X}}h:=\sum_{\beta_i\in \FF_{n_i}^+, i=1,\ldots,k}
   e^1_{\beta_1}\otimes \cdots \otimes  e^k_{\beta_k}\otimes {\bf \Delta_{X}}(I)^{1/2} X_{1,\beta_1}^*\cdots X_{k,\beta_k}^*h, \qquad h\in \cH,
   $$
   where $ {\bf \Delta_{X}}(I)$ is the defect operator.
A very  important property of the Berezin kernel is that
     ${\bf K_{X}} { X}^*_{i,j}= ({\bf S}_{i,j}^*\otimes I)  {\bf K_{X}}$ for any  $i\in \{1,\ldots, k\}$ and $ j\in \{1,\ldots, n_i\}.
    $
    The {\it Berezin transform at} ${\bf X}\in {\bf B_n}(\cH)$ is the map $ \boldsymbol{\cB_{\bf X}}: B(\otimes_{i=1}^k F^2(H_{n_i}))\to B(\cH)$
 defined by
\begin{equation*}
 {\boldsymbol\cB_{\bf X}}[g]:= {\bf K^*_{\bf X}} (g\otimes I_\cH) {\bf K_{\bf X}},
 \qquad g\in B(\otimes_{i=1}^k F^2(H_{n_i})).
 \end{equation*}
  If $g$ is in   the $C^*$-algebra $C^*({\bf S})$ generated by ${\bf S}_{i,1},\ldots,{\bf S}_{i,n_i}$, where $i\in \{1,\ldots, k\}$, we  define the Berezin transform at  $X\in {\bf B_n}(\cH)^-$  by
  $${\boldsymbol\cB_{\bf X}}[g]:=\lim_{r\to 1} {\bf K^*_{r\bf X}} (g\otimes I_\cH) {\bf K_{r\bf X}},
 \qquad g\in  C^*({\bf S}),
 $$
 where the limit is in the operator norm topology.
In this case, the Berezin transform at $X$ is a unital  completely positive linear  map such that
 $${\boldsymbol\cB_{\bf X}}({\bf S}_{\boldsymbol\alpha} {\bf S}_{\boldsymbol\beta}^*)={\bf X}_{\boldsymbol\alpha} {\bf X}_{\boldsymbol\beta}^*, \qquad \boldsymbol\alpha, \boldsymbol\beta \in \FF_{n_1}^+\times \cdots \times\FF_{n_k}^+,
 $$
 where  ${\bf S}_{\boldsymbol\alpha}:= {\bf S}_{1,\alpha_1}\cdots {\bf S}_{k,\alpha_k}$ if  $\boldsymbol\alpha:=(\alpha_1,\ldots, \alpha_k)\in \FF_{n_1}^+\times \cdots \times\FF_{n_k}^+$.

The  Berezin transforms will play an important role in this paper.
   More properties  concerning  noncommutative Berezin transforms and multivariable operator theory on noncommutative balls and  polydomains, can be found in \cite{Po-poisson}      and \cite{Po-Berezin-poly}.
For basic results on completely positive (resp.~bounded)  maps  we refer the reader to \cite{Pa-book} and \cite{Pi-book} .

For each $m\in \ZZ$, we set $m^+:=\max\{m, 0\}$ and $m^-:=\max\{-m, 0\}$.
 A function $F$ with operator-valued coefficients in $B(\cE)$, where $\cE$ is separable Hilbert space, is called {\it free $k$-pluriharmonic} on the abstract polyball ${\bf B_n}$    if it has the form
$$
F({\bf X})= \sum_{m_1\in \ZZ}\cdots \sum_{m_k\in \ZZ} \sum_{{\alpha_i,\beta_i\in \FF_{n_i}^+, i\in \{1,\ldots, k\}}\atop{|\alpha_i|=m_i^-, |\beta_i|=m_i^+}}A_{(\alpha_1,\ldots,\alpha_k;\beta_1,\ldots, \beta_k)}
\otimes {\bf X}_{1,\alpha_1}\cdots {\bf X}_{k,\alpha_k}{\bf X}_{1,\beta_1}^*\cdots {\bf X}_{k,\beta_k}^*,
$$
where the multi-series converge in the operator norm topology for any  ${\bf X}=(X_1,\ldots, X_k)\in {\bf B_n}(\cH)$, with $X_i:=(X_{i,1},\ldots, X_{i,n_i})$, and any Hilbert space $\cH$. Without loss of generality, we can assume throughout this paper that $\cH$ is a separable infinite dimensional  Hilbert space.
According to \cite{Po-pluriharmonic-polyball}, the order of the series in the definition above is irrelevant. Note that any free holomorphic function on ${\bf B_n}$ is $k$-pluriharmonic. Indeed, according to \cite{Po-automorphisms-polyball}, any free holomorphic function on the polyball ${\bf B_n}$  with coefficients in $B(\cE)$ has the form
$$
f({\bf X})= \sum_{m_1\in \NN}\cdots \sum_{m_k\in \NN} \sum_{{\alpha_i\in \FF_{n_i}^+, i\in \{1,\ldots, k\}}\atop{|\alpha_i|=m_i}}C_{(\alpha_1,\ldots,\alpha_k)}
\otimes {\bf X}_{1,\alpha_1}\cdots {\bf X}_{k,\alpha_k},\qquad {\bf X}\in {\bf B_n}(\cH),
$$
where the multi-series converge in the operator norm topology.

Now, we introduce  a preorder relation
$\overset{H}{\prec} $ on the closed ball ${\bf B_n}(\cH)^-$.
If ${\bf A}$ and ${\bf B}$ are in
${\bf B_n}(\cH)^-$, we say that ${\bf A}$ is {\it Harnack dominated} by ${\bf B}$, and
denote ${\bf A}\overset{H}{\prec}\, {\bf B}$, if there exists $c>0$ such that
$$F(r{\bf A})\leq c^2 F(r{\bf B}) $$
 for any positive
free $k$-pluriharmonic function $F$  with operator valued coefficients
and any $r\in [0,1)$.
 When we
want to emphasize the constant $c$, we write
${\bf A}\overset{H}{{\underset{c}\prec}}\, {\bf B}$.
Since $\overset{H}{\prec} $ is a preorder
relation on ${\bf B_n}(\cH)^-$, it induces an equivalence relation
$\overset{H}\sim$ on ${\bf B_n}(\cH)^-$, which we call {\it Harnack
equivalence}. The equivalence classes with respect to
$\overset{H}\sim$ are called {\it Harnack parts} of ${\bf B_n}(\cH)^-$. Let
 ${\bf A}$ and ${\bf B}$ are in
${\bf B_n}(\cH)^-$. It is easy to see that ${\bf A}$ and ${\bf B}$ are Harnack
equivalent (we denote ${\bf A}\overset{H}{\sim}\, {\bf B}$)  if and only if
there exists $c\geq  1$ such that

\begin{equation}
\label{def-H}
\frac{1}{c^2}F(r{\bf B})\leq  F(r{\bf A})\leq c^2 F(r{\bf B})
\end{equation}
 for any positive
free $k$-pluriharmonic function $F$  with operator-valued coefficients
and any $r\in [0,1)$. We also use the
notation ${\bf A}\overset{H}{{\underset{c}\sim}}\, {\bf B}$ if
${\bf A}\overset{H}{{\underset{c}\prec}}\, {\bf B}$ and
${\bf B}\overset{H}{{\underset{c}\prec}}\, {\bf A}$.

We denote by $C^*({\bf S})$ the $C^*$-algebra generated by ${\bf S}_{i,j}$, where $i\in \{1,\ldots, k\}$ and $j\in \{1,\ldots, n_i\}$.
 A completely positive  (c.p.) linear map  $\mu_{\bf X}:C^*({\bf S})\to
 B(\cH)$ is called  {\it representing c.p. map} for the point
 ${\bf X}\in {\bf B_n}(\cH)^-$ if
 $$
 \mu_{\bf X}({\bf S}_{1,\alpha_1}\cdots {\bf S}_{k,\alpha_k}{\bf S}_{1,\beta_1}^*\cdots {\bf S}_{k,\beta_k}^*)={\bf X}_{1,\alpha_1}\cdots {\bf X}_{k,\alpha_k}{\bf X}_{1,\beta_1}^*\cdots {\bf X}_{k,\beta_k}^*
 $$
for any
$  (\alpha_1,\ldots, \alpha_k)$ and $(\beta_1,\ldots, \beta_k)$ in
 $\FF_{n_1}^+\times \cdots \times \FF_{n_k}^+$ with  $\alpha_i,\beta_i\in \FF_{n_i}^+$, $|\alpha_i|=m_i^-, |\beta_i|=m_i^+$, and $m_i\in \ZZ$.

  Next, we obtain   characterizations for the Harnack equivalence on the closed regular polyball
${\bf B_n}(\cH)^-$.

\begin{proposition}
\label{equivalent2} Let
 ${\bf A}$ and ${\bf B}$ are in
${\bf B_n}(\cH)^-$ and let $c>1$. Then the following
statements are equivalent:
\begin{enumerate}
\item[(i)]
${\bf A}\overset{H}{{\underset{c}\sim}}\, {\bf B}$;

\item[(ii)]
$  c^2\boldsymbol\cB_{\bf B}-\boldsymbol\cB_{\bf A}$  and  $  c^2\boldsymbol\cB_{\bf A}-\boldsymbol\cB_{\bf B}$ are completely positive
linear map on the operator space $\text{\rm span}\{ \boldsymbol\cA_{\bf n}^* \boldsymbol\cA_{\bf n}\}^{-\|\cdot\|}$, where $\boldsymbol\cB_{\bf X}$ is the noncommutative Berezin
transform at ${\bf X}\in {\bf B_n}(\cH)^-$;
\item[(iii)] there are   representing c.p. maps
$\mu_{\bf A}$ and $\mu_{\bf B}$ for ${\bf A}$ and ${\bf B}$, respectively, such that
$$
\frac{1}{c^2} \mu_{\bf B}\leq \mu_{\bf A}\leq c^2 \mu_{\bf B}.
$$
\end{enumerate}
\end{proposition}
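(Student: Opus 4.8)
The plan is to prove the two equivalences (i)$\Leftrightarrow$(ii) and (ii)$\Leftrightarrow$(iii) separately. Throughout I write $\cM:=\text{\rm span}\{\boldsymbol\cA_{\bf n}^*\boldsymbol\cA_{\bf n}\}^{-\|\cdot\|}$ for the operator system in (ii). The whole argument rests on the dictionary between positive free $k$-pluriharmonic functions and the Berezin transform on $\cM$, supplied by \cite{Po-pluriharmonic-polyball}: a free $k$-pluriharmonic function $F$ with coefficients in $B(\cE)$ carries a model $g:=F({\bf S})\in B(\cE)\otimes\cM$, one has the reproducing identity
$$
F(r{\bf X})=(id\otimes\boldsymbol\cB_{r{\bf X}})[g],\qquad {\bf X}\in {\bf B_n}(\cH)^-,\ r\in[0,1),
$$
and $F$ is positive precisely when $g\geq 0$. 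The second tool I would record is the factorization $\boldsymbol\cB_{r{\bf X}}=\boldsymbol\cB_{\bf X}\circ\Lambda_r$ on $\cM$, where $\Lambda_r:=\boldsymbol\cB_{r{\bf S}}|_\cM$ sends ${\bf S}_{\boldsymbol\alpha}{\bf S}_{\boldsymbol\beta}^*$ to $r^{|\boldsymbol\alpha|+|\boldsymbol\beta|}{\bf S}_{\boldsymbol\alpha}{\bf S}_{\boldsymbol\beta}^*$; this is a single completely positive map of $\cM$ into itself, independent of ${\bf X}$, and it is verified on the generators ${\bf S}_{\boldsymbol\alpha}{\bf S}_{\boldsymbol\beta}^*$ and extended by linearity and continuity.

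For (ii)$\Rightarrow$(i), take a positive free $k$-pluriharmonic $F$ with positive model $g\geq 0$. Since applying $\Lambda_r$ bounds the symbol (the radial factor $r^{|\boldsymbol\alpha|+|\boldsymbol\beta|}$ forces norm convergence), I would write
$$
c^2F(r{\bf B})-F(r{\bf A})=\big(id\otimes((c^2\boldsymbol\cB_{\bf B}-\boldsymbol\cB_{\bf A})\circ\Lambda_r)\big)[g]\geq 0,
$$
because $(c^2\boldsymbol\cB_{\bf B}-\boldsymbol\cB_{\bf A})\circ\Lambda_r$ is a composition of completely positive maps and $g\geq 0$; the companion inequality is identical. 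This is exactly (i). For (i)$\Rightarrow$(ii), I would run the correspondence backwards: for any separable $\cE$ and any $g\geq 0$ in $B(\cE)\otimes\cM$, the function $F({\bf X}):=(id\otimes\boldsymbol\cB_{\bf X})[g]$ is positive and free $k$-pluriharmonic, so (i) gives $c^2F(r{\bf B})-F(r{\bf A})\geq 0$ for all $r<1$. Letting $r\to 1$ and using the norm convergence $\boldsymbol\cB_{r{\bf X}}\to\boldsymbol\cB_{\bf X}$ on $\cM$ yields $(id\otimes(c^2\boldsymbol\cB_{\bf B}-\boldsymbol\cB_{\bf A}))[g]\geq 0$; as $\cE$ and $g\geq 0$ are arbitrary, $c^2\boldsymbol\cB_{\bf B}-\boldsymbol\cB_{\bf A}$ is completely positive on $\cM$, and symmetrically for the other map, which is (ii).

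For (iii)$\Rightarrow$(ii), note that every representing c.p.\ map agrees with the Berezin transform on $\cM$ (both send ${\bf S}_{\boldsymbol\alpha}{\bf S}_{\boldsymbol\beta}^*$ to ${\bf X}_{\boldsymbol\alpha}{\bf X}_{\boldsymbol\beta}^*$), so restricting $\tfrac1{c^2}\mu_{\bf B}\leq\mu_{\bf A}\leq c^2\mu_{\bf B}$ to $\cM$ gives precisely the complete positivity of $c^2\boldsymbol\cB_{\bf B}-\boldsymbol\cB_{\bf A}$ and $c^2\boldsymbol\cB_{\bf A}-\boldsymbol\cB_{\bf B}$. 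The reverse direction (ii)$\Rightarrow$(iii) is the one genuinely constructive step. Set $\varphi:=c^2\boldsymbol\cB_{\bf B}-\boldsymbol\cB_{\bf A}$ and $\psi:=c^2\boldsymbol\cB_{\bf A}-\boldsymbol\cB_{\bf B}$, completely positive on $\cM$ by (ii), and extend them by Arveson's extension theorem to completely positive maps $\tilde\varphi,\tilde\psi$ on $C^*({\bf S})$. Using $c>1$, so that $c^4-1>0$, define
$$
\mu_{\bf A}:=\frac{1}{c^4-1}\left(\tilde\varphi+c^2\tilde\psi\right),\qquad
\mu_{\bf B}:=\frac{1}{c^4-1}\left(c^2\tilde\varphi+\tilde\psi\right).
$$
Both are completely positive, and a direct computation on $\cM$ gives $\mu_{\bf A}|_\cM=\boldsymbol\cB_{\bf A}|_\cM$ and $\mu_{\bf B}|_\cM=\boldsymbol\cB_{\bf B}|_\cM$, so they are representing c.p.\ maps for ${\bf A}$ and ${\bf B}$; meanwhile $c^2\mu_{\bf B}-\mu_{\bf A}=\tilde\varphi\geq 0$ and $c^2\mu_{\bf A}-\mu_{\bf B}=\tilde\psi\geq 0$ on all of $C^*({\bf S})$, which is (iii).

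The algebraic core, namely the averaging identity in (ii)$\Rightarrow$(iii), is short once Arveson's theorem is available, so I expect the main obstacle to be the analytic interface with \cite{Po-pluriharmonic-polyball}: one must pin down exactly that positivity of a free $k$-pluriharmonic function is equivalent to positivity of its model $g$ in $B(\cE)\otimes\cM$, and that the reproducing identity together with the factorization through $\Lambda_r$ remains valid at boundary points of ${\bf B_n}(\cH)^-$, where $\boldsymbol\cB_{\bf X}$ is defined only as a norm limit. Handling possibly unbounded positive pluriharmonic functions by passing through the radius $r<1$ (where $(id\otimes\Lambda_r)g$ is bounded and positive) is the point that requires the most care; everything else is bookkeeping with completely positive maps.
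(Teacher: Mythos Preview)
Your argument is correct and follows the same route as the paper. Two small comments are worth recording.

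First, the factorization $\boldsymbol\cB_{r{\bf X}}=\boldsymbol\cB_{\bf X}\circ\Lambda_r$ is valid but inessential for (ii)$\Rightarrow$(i): the paper simply observes that for any positive free $k$-pluriharmonic $F$ and any $r\in[0,1)$ the element $F(r{\bf S})$ is a genuine positive operator in $B(\cE)\otimes_{min}\cM$ (norm-convergent series), and that $F(r{\bf A})=(id\otimes\boldsymbol\cB_{\bf A})[F(r{\bf S})]$. Applying the completely positive map $id\otimes(c^2\boldsymbol\cB_{\bf B}-\boldsymbol\cB_{\bf A})$ to $F(r{\bf S})\geq 0$ gives the Harnack inequality at radius $r$ directly, with no formal ``model'' $g=F({\bf S})$ ever needed. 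This sidesteps the unboundedness concern you flag in your last paragraph: one never leaves the bounded setting.

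Second, for (iii)$\Rightarrow$(ii) you assert that a representing c.p.\ map agrees with the Berezin transform on all of $\cM$. This is true, but it requires the observation that the Cuntz-type relations $S_{i,s}^*S_{i,t}=\delta_{st}I$ force every product ${\bf S}_{\boldsymbol\gamma}^*{\bf S}_{\boldsymbol\delta}$ to be either $0$ or of the special form ${\bf S}_{\boldsymbol\alpha}{\bf S}_{\boldsymbol\beta}^*$ with, for each $i$, one of $\alpha_i,\beta_i$ the identity word; hence these special products span $\cM$ densely. The paper instead closes the cycle by proving (iii)$\Rightarrow$(i) directly: since $F(r{\bf S})$ is itself a series in the special products, one may evaluate it under $id\otimes\mu_{\bf A}$ and $id\otimes\mu_{\bf B}$ and use only the defining property of a representing map. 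Either closure works; yours just needs that reduction spelled out. Your formulas for $\mu_{\bf A},\mu_{\bf B}$ in (ii)$\Rightarrow$(iii) are correct.
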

\begin{proof}
 Let ${\bf A}$ and ${\bf B}$ be elements in the regular closed polyball
${\bf B_n}(\cH)^-$ and let $c>1$. First we prove that  ${\bf A}\overset{H}{{\underset{c}\prec}}\, {\bf B}$ if and only if
$  c^2\boldsymbol\cB_{\bf B}-\boldsymbol\cB_{\bf A}$   is a completely positive linear map on the
operator space $\text{\rm span}\{ \boldsymbol\cA_{\bf n}^* \boldsymbol\cA_{\bf n}\}^{-\|\cdot\|}$. Assume that  ${\bf A}\overset{H}{{\underset{c}\prec}}\, {\bf B}$ and let $g\in \boldsymbol\cP:=  \text{\rm span}\, \{\boldsymbol\cA_{\bf n}^*\boldsymbol\cA_{\bf n}\}^{-\|\cdot \|}$ be a positive operator. According to Theorem 2.4 from \cite{Po-pluriharmonic-polyball}, the map
$$
F({\bf X})=\boldsymbol{\cB}_{\bf X} [g]:=  {\bf K}^*_{\bf X}[g\otimes I_\cH] {\bf K}_{\bf X} , \qquad {\bf X}\in {\bf B}_{\bf n}(\cH),
  $$
 is a positive free $k$-pluriharmonic function on ${\bf B}_{\bf n}(\cH)$ which
 has a continuous extension  (in the operator norm topology) to
the closed ball ${\bf B}_{\bf n}(\cH)^-$. Since ${\bf A}\overset{H}{{\underset{c}\prec}}\, {\bf B}$, we have
$F(r{\bf A})\leq c^2F(r{\bf B})$ for any $r\in [0,1)$, which is equivalent to
$$
\left(c^2\boldsymbol{\cB}_{{
r\bf B}}-\boldsymbol{\cB}_{{
r\bf A}}\right)[g]\geq 0, \qquad r\in [0,1).
$$
Since $\boldsymbol{\cB}_{{
\bf B}}[g]:=\lim_{r\to 1} \boldsymbol{\cB}_{{
r\bf B}}[g]=F({\bf B})$ exists in the operator norm topology, and a similar result holds if we replace ${\bf B}$ with ${\bf A}$, we deduce that
$c^2\boldsymbol{\cB}_{{
\bf B}}-\boldsymbol{\cB}_{{
\bf A}}$ is a positive linear map on  $\text{\rm span}\{ \boldsymbol\cA_{\bf n}^* \boldsymbol\cA_{\bf n}\}^{-\|\cdot\|}$. Similarly, passing to matrices one ca prove that $c^2\boldsymbol{\cB}_{{
\bf B}}-\boldsymbol{\cB}_{{
\bf A}}$ is completely positive. Hence,  we deduce that  $ c^2\boldsymbol\cB_{\bf B}-\boldsymbol\cB_{\bf A}$ is completely positive  on the operator space $\text{\rm span}\{ \boldsymbol\cA_{\bf n}^* \boldsymbol\cA_{\bf n}\}^{-\|\cdot\|}$.

Conversely, assume that  $ c^2\boldsymbol\cB_{\bf B}-\boldsymbol\cB_{\bf A}$ is completely positive  on the operator space $\text{\rm span}\{ \boldsymbol\cA_{\bf n}^* \boldsymbol\cA_{\bf n}\}^{-\|\cdot\|}$. Then $c^2\boldsymbol{\cB}^{ext}_{{
\bf B}}-\boldsymbol{\cB}^{ext}_{{
\bf A}}$ is   positive on  $\boldsymbol\cP_\cE:=B(\cE)\otimes_{min} \text{\rm span}\, \{\boldsymbol\cA_{\bf n}^*\boldsymbol\cA_{\bf n}\}^{-\|\cdot \|}$, where
$$
\boldsymbol{\cB}_{\bf X}^{ext}[g]:=(I_\cE\otimes {\bf K}^*_{\bf X})[g\otimes I_\cH](I_\cE\otimes  {\bf K}_{\bf X}), \qquad {\bf X}\in {\bf B}_{\bf n}(\cH).
$$
 Let $F:{\bf B}_{\bf n}(\cH)\to B(\cE)\otimes_{min} B(\cH)$ be a positive free $k$-pluriharmonic function on ${\bf B}_{\bf n}(\cH)$. Then, for each $r\in [0,1)$, we have
 $$
 F(r{\bf X})=\boldsymbol{\cB}^{ext}_{\bf X}[F(r{\bf S})]\geq 0
 $$
 and $F(r{\bf S})\in \boldsymbol\cP_\cE$. Consequently, we have
 $$
 c^2F(r{\bf B})-F(r{\bf A})=\left(c^2\boldsymbol{\cB}^{ext}_{{
\bf B}}-\boldsymbol{\cB}^{ext}_{{
\bf A}}\right)[F(r{\bf S})]\geq 0
 $$
 for any $r\in [0,1)$, which proves our assertion. Now, the equivalence of (i) and (ii) is  clear.

It remains to prove that (ii) is equivalent to (iii). To this end,  assume that item (ii) holds. According to Arveson's extension theorem \cite{Arv-acta}, there are completely positive maps $\varphi, \psi:C^*({\bf S})\to B(\cH)$ such that
$$
\varphi(g)=c^2\boldsymbol\cB_{\bf A}[g]-\boldsymbol\cB_{\bf B}[g]\quad \text{ and } \quad \psi(g)=c^2\boldsymbol\cB_{\bf B}[g]-\boldsymbol\cB_{\bf A}[g]
$$
for any $g\in \text{\rm span}\{ \boldsymbol\cA_{\bf n}^* \boldsymbol\cA_{\bf n}\}^{-\|\cdot\|}$. Hence, we deduce that
$$
\boldsymbol\cB_{\bf A}[g]=\frac{c^2}{c^4-1}(c^2\varphi(g)+\psi(g))\quad \text{and } \quad \boldsymbol\cB_{\bf B}[g]=\frac{c^2}{c^4-1}(c^2\psi(g)+\varphi(g)).
$$
 Now, we define $\mu_{\bf A}:C^*({\bf S})\to B(\cH)$ and $\mu_{\bf B}:C^*({\bf S})\to B(\cH)$ by setting
 \begin{equation}\label{muab}
 \mu_{\bf A}:=\frac{c^2}{c^4-1}(c^2\varphi+\psi)\quad \text{ and }\quad
 \mu_{\bf B}:=\frac{c^2}{c^4-1}(c^2\psi+\varphi)
 \end{equation}
 and note that $\boldsymbol\cB_{\bf A}[g]=\mu_{\bf A}(g)$ and
 $\boldsymbol\cB_{\bf B}[g]=\mu_{\bf B}(g)$ for any $g\in \text{\rm span}\{ \boldsymbol\cA_{\bf n}^* \boldsymbol\cA_{\bf n}\}^{-\|\cdot\|}$.
 Due to the properties of the noncommutative  Berezin transform, it is clear that 
$\mu_{\bf A}$ and $\mu_{\bf B}$ are representing c.p. maps for ${\bf A}$ and ${\bf B}$, respectively. The inequalities $
\frac{1}{c^2} \mu_{\bf B}\leq \mu_{\bf A}\leq c^2 \mu_{\bf B}
$
are simple consequences of  relation \eqref{muab} and the fact that $c>1$.
To complete the proof, it is enough to  prove that (iii)$\implies$(i). To this end, assume that (iii) holds for some $c>1$ and let $F:{\bf B}_{\bf n}(\cH)\to B(\cE)\otimes_{min} B(\cH)$ be a positive free $k$-pluriharmonic function on ${\bf B}_{\bf n}(\cH)$. Then $F(r{\bf S})\geq 0$ for any $r\in [0,1)$.
Since $F(r{\bf S})\in B(\cE)\otimes_{min} \text{\rm span}\, \{\boldsymbol\cA_{\bf n}^*\boldsymbol\cA_{\bf n}\}^{-\|\cdot \|}$, we deduce that
$$
\frac{1}{c^2}(id\otimes  \mu_{\bf B})[F(r{\bf S})]\leq (id\otimes  \mu_{\bf A})[F(r{\bf S})]\leq c^2 (id\otimes  \mu_{\bf B})[F(r{\bf S})]
$$
for any $r\in [0,1)$, which implies ${\bf A}\overset{H}{{\underset{c}\sim}}\, {\bf B}$ and completes the proof.
\end{proof}

 A bounded linear operator $A\in B(\cE\otimes \bigotimes_{i=1}^k F^2(H_{n_i}))$ is called {\it $k$-multi-Toeplitz} with respect to the universal model
 ${\bf R}:=({\bf R}_1,\ldots, {\bf R}_k)$,  where  ${\bf R}_i:=({\bf R}_{i,1},\ldots,{\bf R}_{i,n_i})$, if
 $$
 (I_\cE\otimes {\bf R}_{i,s}^*)A(I_\cE\otimes {\bf R}_{i,t})=\delta_{st}A,\qquad s,t\in \{1,\ldots, n_i\},
 $$
  for every $i\in\{1,\ldots, k\}$.
 Let     $\boldsymbol{\cT_{\bf n}}$ be the set of all    $k$-multi-Toeplitz operators  on $\cE\otimes\bigotimes_{i=1}^k F^2(H_{n_i})$.
 In \cite{Po-pluriharmonic-polyball}, we proved that
\begin{equation*}\begin{split}
\boldsymbol{\cT_{\bf n}}&=\text{\rm span} \{f^*g:\ f, g\in B(\cE)\otimes_{min} \boldsymbol\cA_{\bf n}\}^{- \text{\rm SOT}}\\
&=\text{\rm span} \{f^*g:\ f, g\in B(\cE)\otimes_{min} \boldsymbol\cA_{\bf n}\}^{- \text{\rm WOT}},
\end{split}
\end{equation*}
where $\boldsymbol\cA_{\bf n}$ is the polyball algebra.
In what follows, we provide a Harnack type inequality for positive free $k$-pluriharmonic function on the regular polyballs.

 \begin{theorem} \label{H-ineq}
 Let $F$ be a positive free $k$-pluriharmonic function on the regular polyball ${\bf B}_{\bf n}$, with operator coefficients in $B(\cE)$ and let $0\leq r<1$. Then
 $$
 F(0)\left(\frac{1-r}{1+r}\right)^k\leq F({\bf X})\leq F(0)\left(\frac{1+r}{1-r}\right)^k
 $$
 for any ${\bf X}\in r{\bf B}_{\bf n}(\cH)^-$.
 \end{theorem}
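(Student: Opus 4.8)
The plan is to pass to the universal model and reduce the two-sided estimate to a comparison between the noncommutative Berezin transform at ${\bf X}$ and the vacuum (the Berezin transform at $0$). Recall from the proof of Proposition \ref{equivalent2} that for ${\bf X}\in {\bf B}_{\bf n}(\cH)$ and $r\in[0,1)$ one has $F(r{\bf X})=\boldsymbol{\cB}^{ext}_{\bf X}[F(r{\bf S})]$ with $F(r{\bf S})\geq 0$, while $\boldsymbol{\cB}^{ext}_0[F(r{\bf S})]=F(0)$ since the Berezin transform at $0$ is the vacuum expectation and annihilates every nonconstant term. First I would note that a point ${\bf X}\in r{\bf B}_{\bf n}(\cH)^-$ already lies in the open polyball (scaling a closed–polyball element by $r<1$ forces ${\bf \Delta_{X}}(I)>0$), so $F({\bf X})$ is defined and $F({\bf X})=\lim_{s\to1}\boldsymbol{\cB}^{ext}_{\bf X}[F(s{\bf S})]$ by norm–continuity of $F$ on the open ball; moreover each block satisfies $\|X_i\|\leq r$. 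Hence it suffices to prove the completely positive comparison
$$
\Big(\tfrac{1-r}{1+r}\Big)^{k}\,\boldsymbol{\cB}_0\ \leq\ \boldsymbol{\cB}_{\bf X}\ \leq\ \Big(\tfrac{1+r}{1-r}\Big)^{k}\,\boldsymbol{\cB}_0
\qquad\text{on}\ \text{\rm span}\{\boldsymbol\cA_{\bf n}^*\boldsymbol\cA_{\bf n}\}^{-\|\cdot\|};
$$
applying this (tensored with $id_{B(\cE)}$) to the positive operators $F(s{\bf S})$ and letting $s\to1$ then yields the theorem, since each difference is completely positive and $F(s{\bf S})\geq 0$ for all $s$.

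The essential point is the multiplicative (product) structure, which is what produces the exponent $k$. Because the defect mapping factors as ${\bf \Delta_{X}}=(id-\Phi_{X_1})\circ\cdots\circ(id-\Phi_{X_k})$ and, by the polyball commutation relations, the entries of distinct blocks commute (so the factors $I-\sum_j {\bf R}_{i,j}\otimes X_{i,j}^*$ commute across $i$), the Berezin transform $\boldsymbol{\cB}_{\bf X}$ likewise factors as a composition $\boldsymbol{\cB}^{(1)}_{X_1}\circ\cdots\circ\boldsymbol{\cB}^{(k)}_{X_k}$ of the single–block Berezin transforms, with $\boldsymbol{\cB}_0=E_1\circ\cdots\circ E_k$ the corresponding composition of vacuum expectations. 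I would then invoke the single–variable Harnack estimate on each ball $[B(\cH)^{n_i}]_1$: for $\|X_i\|\leq r$ one has $\tfrac{1-r}{1+r}E_i\leq \boldsymbol{\cB}^{(i)}_{X_i}\leq \tfrac{1+r}{1-r}E_i$ as completely positive maps. Since composition of completely positive maps preserves the order (both $C\circ(\cdot)$ and $(\cdot)\circ C$ send c.p.\ maps to c.p.\ maps), I would peel off one factor at a time, replacing $\boldsymbol{\cB}^{(i)}_{X_i}$ by $\tfrac{1\pm r}{1\mp r}E_i$ and accumulating one factor $\tfrac{1\pm r}{1\mp r}$ per block; after $k$ steps this gives exactly the displayed comparison with the constants $\big(\tfrac{1\mp r}{1\pm r}\big)^{k}$.

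The analytic heart, and the step I expect to be the main obstacle, is the sharp single–block bound $\tfrac{1-r}{1+r}I\leq \boldsymbol{\cP}(R_i,X_i)\leq \tfrac{1+r}{1-r}I$ for a strict row contraction with $\|X_i\|\leq r$: the naive resolvent estimate $\|(I-\sum_j R_{i,j}\otimes X_{i,j}^*)^{-1}\|\leq (1-r)^{-1}$ only delivers the weaker bound $(1-r)^{-2}$, so getting the sharp constant $\tfrac{1+r}{1-r}$ requires exploiting the exact interplay between the resolvent and the defect $I-\sum_j X_{i,j}X_{i,j}^*$ (this is the operator analogue of the classical computation $\tfrac{1-r}{1+r}\leq \tfrac{1-|z|^2}{|1-\bar z e^{i\theta}|^2}\leq \tfrac{1+r}{1-r}$, as in the ball case of \cite{Po-hyperbolic}). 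The two remaining technical points are comparatively routine: justifying the factorization of $\boldsymbol{\cB}_{\bf X}$ into single–block transforms, which is precisely where the commutativity between distinct blocks is used and which mirrors the factorization of ${\bf \Delta_{X}}$, and the passage $s\to1$, which is legitimate because each map $\big(\tfrac{1+r}{1-r}\big)^{k}\boldsymbol{\cB}_0-\boldsymbol{\cB}_{\bf X}$ (and its companion) is completely positive on $\text{\rm span}\{\boldsymbol\cA_{\bf n}^*\boldsymbol\cA_{\bf n}\}^{-\|\cdot\|}$ and $F(s{\bf S})\geq 0$ throughout.
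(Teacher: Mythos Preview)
Your overall strategy---iterate the single-ball Harnack inequality from \cite{Po-hyperbolic} across the $k$ blocks and then transfer to ${\bf X}$ via the Berezin transform---is exactly the paper's approach. However, the device you use to implement the iteration has a genuine gap: the claimed factorization $\boldsymbol{\cB}_{\bf X}=\boldsymbol{\cB}^{(1)}_{X_1}\circ\cdots\circ\boldsymbol{\cB}^{(k)}_{X_k}$ does not make sense as written. Each single-block transform $\boldsymbol{\cB}^{(i)}_{X_i}$ maps $B(F^2(H_{n_i}))\to B(\cH)$, so the codomain of one is not the domain of the next; even reading ``composition'' as ``tensor product'' fails, since $\boldsymbol{\cB}^{(1)}_{X_1}\otimes\cdots\otimes\boldsymbol{\cB}^{(k)}_{X_k}$ lands in $B(\cH^{\otimes k})$, not $B(\cH)$ (and $\boldsymbol{\cB}_0=E_1\otimes\cdots\otimes E_k$, not a composition). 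The issue is that the $X_i$'s all act on the \emph{same} $\cH$, so you cannot decouple the blocks into independent c.p.\ maps that then multiply; the multiplication map $B(\cH)\otimes B(\cH)\to B(\cH)$ is not completely positive, so your ``peel off one factor at a time'' argument, as a statement about c.p.\ orderings of maps, does not go through.

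The paper sidesteps this by iterating at the level of the \emph{function} rather than the transform. One observes that $F(r{\bf S})$ is a positive $1$-multi-Toeplitz operator with respect to $R_k$, with coefficients in $B(\cE)\otimes_{min}B(F^2(H_{n_1}))\otimes_{min}\cdots\otimes_{min}B(F^2(H_{n_{k-1}}))$; equivalently, $X_k\mapsto F(rS_1,\ldots,rS_{k-1},X_k)$ is a positive free pluriharmonic function on the single ball $[B(\cH)^{n_k}]_1$ with these operator coefficients. The single-ball Harnack from \cite{Po-hyperbolic} applied directly to this function yields
\[
F(rS_1,\ldots,rS_{k-1},0)\,\tfrac{1-r}{1+r}\ \le\ F(rS_1,\ldots,rS_k)\ \le\ F(rS_1,\ldots,rS_{k-1},0)\,\tfrac{1+r}{1-r},
\]
and one repeats for the remaining blocks. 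After $k$ steps one has the two-sided bound at the universal model, and a single application of the (full) Berezin transform at ${\bf X}\in r{\bf B}_{\bf n}(\cH)^-$ finishes. This uses exactly the same ``analytic heart'' you identified, but packages the iteration in a way that avoids the ill-posed factorization of $\boldsymbol{\cB}_{\bf X}$.
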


 \begin{proof} If $F$ is a  positive free $k$-pluriharmonic function on the regular polyball ${\bf B}_{\bf n}$, with operator coefficients in $B(\cE)$,  then
  there exist   coefficients $A_{(\alpha_1,\ldots,\alpha_k;\beta_1,\ldots, \beta_k)}\in B(\cE)$ with  $\alpha_i,\beta_i\in \FF_{n_i}^+$, $|\alpha_i|=m_i^-, |\beta_i|=m_i^+$
such that, for any $r\in [0,1)$,
$$
F(r{\bf S})= \sum_{m_1\in \ZZ}\cdots \sum_{m_k\in \ZZ} \sum_{{\alpha_i,\beta_i\in \FF_{n_i}^+, i\in \{1,\ldots, k\}}\atop{|\alpha_i|=m_i^-, |\beta_i|=m_i^+}}A_{(\alpha_1,\ldots,\alpha_k;\beta_1,\ldots, \beta_k)}
\otimes r^{\sum_{i=1}^k(|\alpha_i|+|\beta_i|)}
{S}_{1,\alpha_1}{S}_{1,\beta_1}^*\otimes \cdots \otimes {S}_{k,\alpha_k}{ S}_{k,\beta_k}^*,
$$
where the multi-series is convergent in the operator norm topology and the  sum does not depend on the order of the series. In this case,   $F(r{\bf S})$  is a positive $k$-multi-Toeplitz operator and
$$
F(r{\bf S})=F(rS_1,\ldots, rS_k)=\sum_{m_k\in \ZZ}\sum_{|\alpha|=m_k^-, |\beta_k|=m_k^+}
C(\alpha_{k};\beta_k)\otimes r^{|\alpha_k|+|\beta_k|} S_{k,\alpha_k} S_{k, \beta_k}^*.
$$
Note that $F(r{\bf S})$ is also a positive $1$-multi-Topeplitz operator with respect to $R_k=(R_{k,1},\ldots, R_{k, n_k})$, with coefficients  in $B(\cE)\otimes_{min} B(F^2(H_{n_1}))\otimes_{min} \cdots \otimes_{min} B(F^2(H_{n_{k-1}}))$.
Applying the noncommutative Berezin transform at $X_k=(X_{k,1},\ldots, X_{k,n_k})$, we deduce that
$$
G(X_k):=  \sum_{m_k\in \ZZ}\sum_{|\alpha|=m_k^-, |\beta_k|=m_k^+}
C(\alpha_{k};\beta_k)\otimes r^{|\alpha_k|+|\beta_k|} X_{k,\alpha_k} X_{k, \beta_k}^*, \qquad X_k\in [B(\cH)^{n_k}]_1,
$$
is a positive  pluriharmonic function on the unit ball $[B(\cH)^{n_k}]_1$.
Using  the Harnack type inequality from \cite{Po-hyperbolic}, we deduce that
\begin{equation*}
G(0)\frac{1-r}{1+r}\leq G(rS_k)\leq G(0)\frac{1+r}{1-r}.
\end{equation*}
Note that $G(rS_k)=F(rS_1,\ldots, rS_k)$ and $G(0)=F(rS_1,\ldots, rS_{k-1}, 0)$ is a positive $(k-1)$-multi-Toeplitz operator. As above, we obtain
\begin{equation*}
F(rS_1,\ldots, rS_{k-2}, 0,0)\frac{1-r}{1+r}\leq F(rS_1,\ldots, rS_{k-1}, 0)\leq F(rS_1,\ldots, rS_{k-2}, 0,0)\frac{1+r}{1-r}.
\end{equation*}
Continuing this process, we obtain
$$
F(0,\ldots, 0)\frac{1-r}{1+r}\leq F(rS_1,0,\ldots,   0)\leq F(0,\ldots, 0)\frac{1+r}{1-r}.
$$
Now, combining all these inequalities, we deduce that
$$
 F(0)\left(\frac{1-r}{1+r}\right)^k\leq F(rS_1,\ldots, rS_k)\leq F(0)\left(\frac{1+r}{1-r}\right)^k
 $$
 for any $r\in [0,1)$.
 Applying the Berezin transform at  ${\bf X}\in r{\bf B}_{\bf n}(\cH)^-$ to the latter inequalities, we complete the proof.
 \end{proof}

 We define  the {\it free pluriharmonic Poisson kernel}   on the regular polyball  ${\bf B_n}$  by setting
\begin{equation*}
 \boldsymbol{\cP}({\bf R}, {\bf X}):= \sum_{m_1\in \ZZ}\cdots \sum_{m_k\in \ZZ} \sum_{{\alpha_i,\beta_i\in \FF_{n_i}^+, i\in \{1,\ldots, k\}}\atop{|\alpha_i|=m_i^-, |\beta_i|=m_i^+}}{\bf R}_{1,\widetilde\alpha_1}^*\cdots {\bf R}_{k,\widetilde\alpha_k}^*{\bf R}_{1,\widetilde\beta_1}\cdots {\bf R}_{k,\widetilde\beta_k}
\otimes { X}_{1,\alpha_1}\cdots {X}_{k,\alpha_k}{X}_{1,\beta_1}^*\cdots {X}_{k,\beta_k}^*
\end{equation*}
for any ${\bf X}\in {\bf B_n}(\cH)$,
where the convergence is in the operator norm topology, and
  $\widetilde \alpha_i= g^i_{i_k}\cdots g^i_{i_1}$ if
   $\alpha_i=g^i_{i_1}\cdots g^i_{i_k}\in\FF_{n_i}^+$. According to \cite{Po-pluriharmonic-polyball}, the map  ${\bf X}\mapsto
\boldsymbol{\cP}({\bf R}, {\bf X})$ is a positive free $k$-pluriharmonic function
on ${\bf B_n}(\cH)$. In this case, Theorem \ref{H-ineq} implies
 $$
 \left(\frac{1-r}{1+r}\right)^k I\leq \boldsymbol{\cP}({\bf R}, {\bf X})\leq \left(\frac{1+r}{1-r}\right)^k I
 $$
 for any ${\bf X}\in r{\bf B}_{\bf n}(\cH)^-$.

Now, we introduce  a preorder relation
$\overset{P}{\prec} $ on the closed ball ${\bf B_n}(\cH)^-$.
If ${\bf A}$ and ${\bf B}$ are in
${\bf B_n}(\cH)^-$, we say that ${\bf A}$ is {\it Poisson dominated} by ${\bf B}$, and
denote ${\bf A}\overset{P}{\prec}\, {\bf B}$, if there exists $c>0$ such that
$$\boldsymbol{\cP}({\bf R}, r{\bf A})\leq c^2 \boldsymbol{\cP}({\bf R}, r{\bf B}) $$
 for any $r\in [0,1)$.
 When we
want to emphasize the constant $c$, we write
${\bf A}\overset{P}{{\underset{c}\prec}}\, {\bf B}$.
Since $\overset{P}{\prec} $ is a preorder
relation on ${\bf B_n}(\cH)^-$, it induces an equivalence relation
$\overset{P}\sim$ on ${\bf B_n}(\cH)^-$, which we call {\it Poisson
equivalence}. The equivalence classes with respect to
$\overset{P}\sim$ are called {\it Poisson parts} of ${\bf B_n}(\cH)^-$. Let
 ${\bf A}$ and ${\bf B}$ are in
${\bf B_n}(\cH)^-$. It is easy to see that ${\bf A}$ and ${\bf B}$ are Poisson
equivalent (we denote ${\bf A}\overset{P}{\sim}\, {\bf B}$)  if and only if
there exists $c\geq  1$ such that

$$\frac{1}{c^2}\boldsymbol{\cP}({\bf R}, r{\bf B})\leq  \boldsymbol{\cP}({\bf R}, r{\bf A})\leq c^2 \boldsymbol{\cP}({\bf R}, r{\bf B}) $$
 for any $r\in [0,1)$. We also use the
notation ${\bf A}\overset{P}{{\underset{c}\sim}}\, {\bf B}$ if
${\bf A}\overset{P}{{\underset{c}\prec}}\, {\bf B}$ and
${\bf B}\overset{P}{{\underset{c}\prec}}\, {\bf A}$.
We remark that in the particular case when $k=1$, the Poisson equivalence coincides with the Harnack equivalence  (see \cite{Po-hyperbolic}).

We recall that the spectral radius of an  $n_i$-tuple   $A_i:=(A_{i,1},\ldots, A_{i, n_i})$ of operators is given by $$r(A_i):=\lim_{p\to \infty} \|\sum_{\beta_i\in \FF_{n_i}^+, |\beta_i|=p}A_{i,\beta_i} A_{i,\beta_i}^*\|^{1/2p}.
$$
When $n_i=1$ we find again the usual spectral radius of an operator.

\begin{lemma}\label{les}
 Let ${\bf A}=(A_1, \ldots, A_k)\in {\bf B}_{\bf n}(\cH)^-$.
 Then
${\bf A}\overset{P}{\prec}\, {0}$ if and only if  the joint  spectral radius $r(A_i)<1$ for any $i\in \{1,\ldots, k\}$.
\end{lemma}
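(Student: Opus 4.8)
The plan is to reduce the equivalence to a uniform norm bound and then prove the two implications separately: the ``if'' direction by a direct summation of the Poisson series, and the ``only if'' direction by compressing to a single coordinate. Since $\boldsymbol{\cP}({\bf R},0)=I$ (every term of the defining multi-series in which some $\alpha_i$ or $\beta_i$ is nonempty contains a factor $X_{i,j}$ or $X_{i,j}^*$, hence vanishes at $0$), the relation ${\bf A}\overset{P}{\prec}0$ means exactly that there is $c>0$ with $\boldsymbol{\cP}({\bf R},r{\bf A})\le c^2I$ for every $r\in[0,1)$, i.e. $\sup_{r\in[0,1)}\|\boldsymbol{\cP}({\bf R},r{\bf A})\|<\infty$. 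The one computation I would isolate at the outset is the identity $\big\|\sum_{|\beta_i|=p}{\bf R}_{i,\widetilde\beta_i}\otimes A_{i,\beta_i}^*\big\|=\|\Phi_{A_i}^p(I)\|^{1/2}$, which follows from ${\bf R}_{i,\widetilde\alpha_i}^*{\bf R}_{i,\widetilde\beta_i}=\delta_{\alpha_i\beta_i}I$ for $|\alpha_i|=|\beta_i|$ (the operators ${\bf R}_{i,1},\dots,{\bf R}_{i,n_i}$ have orthogonal ranges). Since $r(A_i)=\lim_p\|\Phi_{A_i}^p(I)\|^{1/2p}$, the root test shows that $r(A_i)<1$ if and only if $M_i:=\sum_{p\ge0}\|\Phi_{A_i}^p(I)\|^{1/2}<\infty$.

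\emph{Sufficiency.} Assume $r(A_i)<1$ for every $i$. I would estimate the Poisson series termwise. Grouping it as $\boldsymbol{\cP}({\bf R},r{\bf A})=\sum_{{\bf m}\in\ZZ^k}T_{\bf m}$, where ${\bf m}=(m_1,\dots,m_k)$ records the ranges $|\alpha_i|=m_i^-$, $|\beta_i|=m_i^+$, each block factors as $T_{\bf m}=U_{\bf m}^*V_{\bf m}$, with $U_{\bf m}$ gathering the coordinates having $m_i<0$ and $V_{\bf m}$ those having $m_i>0$. This factorization uses only that right creation operators on distinct Fock factors commute and that the entries of $A_i$ commute with those of $A_j$ for $i\ne j$; no commutation with adjoints is required. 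Applying the norm identity above coordinatewise, together with $\Phi_{A_i}^p(Y)\le\|Y\|\,\Phi_{A_i}^p(I)$ for $Y\ge0$ to handle the resulting compositions, gives $\|T_{\bf m}\|\le\prod_{i=1}^k r^{|m_i|}\|\Phi_{A_i}^{|m_i|}(I)\|^{1/2}$. Summing over ${\bf m}$ and using $\sum_{m_i\in\ZZ}r^{|m_i|}\|\Phi_{A_i}^{|m_i|}(I)\|^{1/2}\le 2M_i-1$ yields $\|\boldsymbol{\cP}({\bf R},r{\bf A})\|\le\sum_{\bf m}\|T_{\bf m}\|\le\prod_{i=1}^k(2M_i-1)<\infty$, a bound independent of $r$; hence ${\bf A}\overset{P}{\prec}0$.

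\emph{Necessity.} Conversely, suppose $\boldsymbol{\cP}({\bf R},r{\bf A})\le c^2I$ for all $r\in[0,1)$, and fix $i$. I would compress to the subspace $\cM_i$ of $\big(\bigotimes_{j}F^2(H_{n_j})\big)\otimes\cH$ spanned by the vacuum in every Fock factor except the $i$-th. Because $P_{1_j}{\bf R}_{j,\widetilde\alpha_j}^*{\bf R}_{j,\widetilde\beta_j}P_{1_j}=\delta_{\alpha_j\beta_j}P_{1_j}$, and the length constraints $|\alpha_j|=m_j^-$, $|\beta_j|=m_j^+$ (together with $m_j^-m_j^+=0$) force $\alpha_j=\beta_j=g_0^j$ for every $j\ne i$, the compression of $\boldsymbol{\cP}({\bf R},r{\bf A})$ to $\cM_i\cong F^2(H_{n_i})\otimes\cH$ is exactly the one-variable Poisson kernel $\sum_{m_i\in\ZZ}\sum_{|\alpha_i|=m_i^-,\,|\beta_i|=m_i^+}r^{|\alpha_i|+|\beta_i|}R_{i,\widetilde\alpha_i}^*R_{i,\widetilde\beta_i}\otimes A_{i,\alpha_i}A_{i,\beta_i}^*$ at the point $rA_i$. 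It is therefore dominated by $c^2I$ for all $r$, which is precisely the assertion that $A_i\overset{P}{\prec}0$ in the ball $[B(\cH)^{n_i}]_1$; the case $k=1$ (for which the Poisson and Harnack equivalences coincide, \cite{Po-hyperbolic}) then gives $r(A_i)<1$.

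The hard part is the necessity direction. The polyball Poisson kernel does \emph{not} split as a product of one-variable kernels, since for a general point the entries of $A_i$ need not commute with the adjoints of the entries of $A_j$; consequently the individual spectral radii cannot be read off from a factorization, and the argument has to route the information about a single coordinate through the compression to $\cM_i$ and the one-variable theorem of \cite{Po-hyperbolic}. On the sufficiency side the only subtle point is the estimate $\|T_{\bf m}\|\le\prod_i r^{|m_i|}\|\Phi_{A_i}^{|m_i|}(I)\|^{1/2}$, which is exactly what renders the crude triangle-inequality bound summable.
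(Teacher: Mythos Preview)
Your proof is correct. The necessity direction is essentially the same as the paper's: both compress $\boldsymbol{\cP}({\bf R},r{\bf A})$ to the subspace obtained by placing the vacuum in every Fock slot except the $i$-th, recover the single-ball Poisson kernel $P(R_i,rA_i)$, and then invoke the case $k=1$ from \cite{Po-hyperbolic} to conclude $r(A_i)<1$.

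The sufficiency direction differs. The paper uses the closed-form factorization
\[
\boldsymbol{\cP}({\bf R},r{\bf A})=\Bigl[\prod_{i=1}^k(I-r\Lambda_i)^{-1}\Bigr]\bigl(I\otimes\boldsymbol\Delta_{r{\bf A}}(I)\bigr)\Bigl[\prod_{i=1}^k(I-r\Lambda_i^*)^{-1}\Bigr],
\qquad \Lambda_i=\sum_{j}{\bf R}_{i,j}^*\otimes A_{i,j},
\]
from \cite{Po-pluriharmonic-polyball}, notes that $r(\Lambda_i)=r(A_i)<1$, and bounds $\sup_{r\in[0,1)}\|(I-r\Lambda_i)^{-1}\|$ by continuity of the resolvent on the compact set $\overline{\DD}\subset\rho(\Lambda_i)$; this gives $\boldsymbol{\cP}({\bf R},r{\bf A})\le M_1^2\cdots M_k^2\,I$. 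Your argument instead estimates the defining multi-series directly: after the $U_{\bf m}^*V_{\bf m}$ factorization, the identity $(\sum_{|\beta_i|=p}{\bf R}_{i,\widetilde\beta_i}\otimes A_{i,\beta_i}^*)^*(\cdots)=I\otimes\Phi_{A_i}^p(I)$ and the monotonicity $\Phi_{A_i}^p(Y)\le\|Y\|\Phi_{A_i}^p(I)$ turn the nested compositions into the product bound $\|T_{\bf m}\|\le\prod_i r^{|m_i|}\|\Phi_{A_i}^{|m_i|}(I)\|^{1/2}$, and summability follows from $r(A_i)<1$. This is more elementary---it avoids quoting the Cauchy-kernel factorization---at the price of a coarser constant; the paper's route is shorter once that factorization is available and yields a cleaner bound that is reused later (e.g.\ in Theorems \ref{foias2} and \ref{dp}). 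Your aside that $r(A_i)<1\Leftrightarrow\sum_p\|\Phi_{A_i}^p(I)\|^{1/2}<\infty$ is in fact correct both ways, by submultiplicativity of $p\mapsto\|\Phi_{A_i}^p(I)\|$ and Fekete's lemma.
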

\begin{proof}
Assume that ${\bf A}\overset{P}{\prec}\, {0}$. Then there is $c>0$ such that
$ \boldsymbol{\cP}({\bf R}, r{\bf A})\leq c^2 I$ for any $r\in [0,1)$.
Set $w:=\sum_{\alpha\in \FF_{n_i}^+} e_\alpha^i\otimes h_\alpha\in F^2(H_{n_i})\otimes \cH$, where $h_\alpha\in \cH$ and $\sum_{\alpha\in \FF_{n_i}^+} \|h_\alpha\|^2<\infty$, and let
$$
\tilde{w}:=\sum_{\alpha\in \FF_{n_i}^+} (\underbrace{1\otimes\cdots \otimes  1}_{\text{${i-1}$
times}}  \otimes\,  e_\alpha^i\otimes 1\otimes \cdots \otimes 1)\otimes h_\alpha
$$
be in $F^2(H_{n_1})\otimes \cdots \otimes F^2(H_{n_k})$. Note that
$$
\left< P(R_i, rA_i)w,w\right>=\left<\boldsymbol{\cP}({\bf R}, r{\bf A})\tilde{w}, \tilde{w}\right>\leq c^2\|\tilde{w}\|^2=c^2\|w\|^2
$$
for any $w\in F^2(H_{n_i})\otimes \cH$, where $P(R_i, rA_i)$ is the Poisson kernel associate with the row contraction $rA_i$.
Applying Theorem 1.2 from \cite{Po-hyperbolic}, we deduce that ${A}_i\overset{P}{\prec}\, {0}$  and, consequently,  $r(A_i)<1$.

Conversely, let ${\bf A}=(A_1, \ldots, A_k)\in {\bf B}_{\bf n}(\cH)^-$ and assume that $r(A_i)<1$ for any $i\in \{1,\ldots, k\}$.
Due to the fact that, for each $i\in \{1,\ldots, k\}$,  ${\bf R}_{i,1},\ldots, {\bf R}_{i,n_i}$ are isometries with orthogonal ranges, we have
  $$
  r(A_i)=r({\bf R}_{i,1}\otimes A_{i,1}^*+\cdots +{\bf R}_{i,n_i}\otimes A_{i,n_i}^*).
  $$
Setting  $\Lambda_i:={\bf R}_{i,1}^*\otimes A_{i,1}+\cdots +{\bf R}_{i,n_i}^*\otimes A_{i,n_i}$ we have  $r(\Lambda_i)=r(A_i)<1$ and  deduce that      the spectrum of
$\Lambda_i$ is included in $ \DD:=\{z\in \CC:\ |z|<1\}$. Since $\lambda\mapsto (\lambda I-\Lambda_i)^{-1}$ is continuous on the resolvent $\rho(\Lambda_i)$, one can easily see that
$M_i:=\sup_{r\in [0,1)} \|(I-r\Lambda_i)^{-1}\|<\infty$.
According to \cite{Po-pluriharmonic-polyball}, $\boldsymbol{\cP}({\bf R}, r{\bf S})$ is equal to
\begin{equation*}
\left[\prod_{i=1}^k(I-{\bf R}_{i,1}^*\otimes r{\bf S}_{i,1}-\cdots
-{\bf R}_{i,n_i}^*\otimes r{\bf S}_{i,n_i})^{-1}\right] \left(I\otimes \boldsymbol\Delta_{r{\bf S}}(I)\right)
\prod_{i=1}^k(I-{\bf R}_{i,1}\otimes r{\bf S}^*_{i,1}-\cdots
-{\bf R}_{i,n_i}\otimes r{\bf S}^*_{i,n_i})^{-1}
\end{equation*}
for any $r\in [0,1)$.
Since the noncommutative Berezin transform
$\boldsymbol\cB_{ {\bf A}}$ is continuous in the operator norm and completely positive, so is $id\otimes \boldsymbol\cB_{ {\bf A}}$. Using the map  $id\otimes \boldsymbol\cB_{ {\bf A}}$,
  we deduce that
\begin{equation*}
\begin{split}
\boldsymbol{\cP}({\bf R}, r{\bf A})
&=\left(id\otimes \boldsymbol\cB_{ {\bf A}}\right)\left[\boldsymbol{\cP}({\bf R}, r{\bf S})\right]\\
&=\prod_{i=1}^k
 \left(I-r\Lambda_i\right)^{-1}\left(I\otimes \boldsymbol\Delta_{r{\bf A}}(I)\right)
 \prod_{i=1}^k
 \left(I-r\Lambda_i^*\right)^{-1}\\
 &\leq M_1^2\cdots M_k^2 I
\end{split}
\end{equation*}
for any $r\in [0,1)$.  This shows that ${\bf A}\overset{P}{\prec}\, {0}$ and completes the proof.
\end{proof}

Now, we show that the Harnack (resp.~Poisson) equivalence class containing the zero element  coincides with the open polyball  ${\bf B}_{\bf n}(\cH)$.

\begin{theorem}
\label{foias2}   Let ${\bf A}=(A_1, \ldots, A_k)\in {\bf B}_{\bf n}(\cH)^-$.
 Then the following statements are equivalent.
 \begin{enumerate}
 \item[(i)]
${\bf A}\overset{H}{\sim}\, 0$;
\item[(ii)] $r(A_i)<1$ for any $i\in \{1,\ldots, k\}$ and there exists $a>0$ such that
    $$\boldsymbol{\cP}({\bf R}, r{\bf A})\geq aI, \qquad r\in [0,1);
    $$
 \item[(iii)]${\bf A}\in {\bf B_n}(\cH)$;
 \item[(iv)]
${\bf A}\overset{P}{\sim}\, 0$.
 \end{enumerate}
 \end{theorem}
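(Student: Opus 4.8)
The plan is to establish the cycle (i) $\Rightarrow$ (ii) $\Rightarrow$ (iii) $\Rightarrow$ (i), together with the equivalence (ii) $\Leftrightarrow$ (iv), which is immediate from Lemma \ref{les}. A preliminary observation I would record first, since it is used twice below, is that for every ${\bf A}\in{\bf B_n}(\cH)^-$ and every $T\subseteq\{1,\ldots,k\}$ the partial defect $\boldsymbol\Delta_{\bf A}^T(I):=\prod_{i\in T}(id-\Phi_{A_i})(I)$ is positive. Indeed, by the intertwining property of the Berezin transform one has $\boldsymbol\Delta_{\bf A}^T(I)=\boldsymbol\cB_{\bf A}[\boldsymbol\Delta_{\bf S}^T(I)]$, and $\boldsymbol\Delta_{\bf S}^T(I)=\prod_{i\in T}(I-\sum_{j}{\bf S}_{i,j}{\bf S}_{i,j}^*)$ is a product of commuting orthogonal projections, hence positive; positivity is then preserved by the completely positive map $\boldsymbol\cB_{\bf A}$. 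In particular $\boldsymbol\Delta_{r{\bf A}}(I)=\sum_{T}(1-r^2)^{k-|T|}r^{2|T|}\boldsymbol\Delta_{\bf A}^T(I)\geq 0$ for $r\in[0,1]$, so the closed polyball is star-shaped about $0$.

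For the Poisson implications I would use that $\boldsymbol{\cP}({\bf R},0)=I$. If (ii) holds, the lower bound gives $0\overset{P}{\prec}{\bf A}$, while $r(A_i)<1$ gives ${\bf A}\overset{P}{\prec}0$ by Lemma \ref{les}; together they yield ${\bf A}\overset{P}{\sim}0$, i.e.\ (iv). Conversely (iv) gives $c\geq 1$ with $\frac1{c^2}I\leq\boldsymbol{\cP}({\bf R},r{\bf A})\leq c^2 I$; the right-hand inequality is ${\bf A}\overset{P}{\prec}0$, whence $r(A_i)<1$ by Lemma \ref{les}, and the left-hand inequality is the lower bound in (ii). Finally, since ${\bf X}\mapsto\boldsymbol{\cP}({\bf R},{\bf X})$ is itself a positive free $k$-pluriharmonic function with coefficients in $B(\bigotimes_i F^2(H_{n_i}))$, it is admissible in the definition \eqref{def-H} of $\overset{H}\sim$; applying ${\bf A}\overset{H}{\sim}0$ to this $F$ yields $\frac1{c^2}I\leq\boldsymbol{\cP}({\bf R},r{\bf A})\leq c^2 I$ and hence (ii). This proves (i) $\Rightarrow$ (ii).

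For (iii) $\Rightarrow$ (i), suppose ${\bf A}\in{\bf B_n}(\cH)$. Since each $A_i$ is a strict row contraction and $\boldsymbol\Delta_{r_0^{-1}{\bf A}}(I)\to\boldsymbol\Delta_{\bf A}(I)>0$ as $r_0\to1^-$, continuity yields an $r_0\in(0,1)$ with $r_0^{-1}{\bf A}\in{\bf B_n}(\cH)$, that is ${\bf A}\in r_0{\bf B_n}(\cH)^-$; by star-shapedness $r{\bf A}\in r_0{\bf B_n}(\cH)^-$ for every $r\in[0,1)$. Applying Theorem \ref{H-ineq} with parameter $r_0$ to an arbitrary positive free $k$-pluriharmonic $F$ gives $\big(\tfrac{1-r_0}{1+r_0}\big)^k F(0)\leq F(r{\bf A})\leq\big(\tfrac{1+r_0}{1-r_0}\big)^k F(0)$ for all $r\in[0,1)$, which is exactly ${\bf A}\overset{H}{\sim}0$ with $c^2=\big(\tfrac{1+r_0}{1-r_0}\big)^k$.

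The core is (ii) $\Rightarrow$ (iii). As in the proof of Lemma \ref{les}, $r(\Lambda_i)=r(A_i)<1$ for $\Lambda_i=\sum_j{\bf R}_{i,j}^*\otimes A_{i,j}$, so $M:=\prod_i\sup_{r\in[0,1)}\|(I-r\Lambda_i)^{-1}\|<\infty$ and $\boldsymbol{\cP}({\bf R},r{\bf A})=P_r(I\otimes\boldsymbol\Delta_{r{\bf A}}(I))P_r^*$ with $P_r:=\prod_i(I-r\Lambda_i)^{-1}$, $\|P_r\|\leq M$. Inverting this identity, the hypothesis $\boldsymbol{\cP}({\bf R},r{\bf A})\geq aI$ gives $I\otimes\boldsymbol\Delta_{r{\bf A}}(I)=P_r^{-1}\boldsymbol{\cP}({\bf R},r{\bf A})(P_r^*)^{-1}\geq a(P_r^*P_r)^{-1}\geq aM^{-2}I$, so $\boldsymbol\Delta_{r{\bf A}}(I)\geq (a/M^2)I$ for all $r$, and letting $r\to1$ yields $\boldsymbol\Delta_{\bf A}(I)\geq(a/M^2)I>0$. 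To upgrade this to strict contractivity of each block, fix $i_0$ and apply the commuting factors $(id-\Phi_{A_i})$, $i\neq i_0$, one at a time to $I-\Phi_{A_{i_0}}(I)$; by the preliminary, every intermediate operator is a positive partial defect, and each factor subtracts a positive operator, so $\boldsymbol\Delta_{\bf A}(I)\leq I-\Phi_{A_{i_0}}(I)$. Combined with $\boldsymbol\Delta_{\bf A}(I)\geq(a/M^2)I$ this forces $\Phi_{A_{i_0}}(I)\leq(1-a/M^2)I$, hence $A_{i_0}\in[B(\cH)^{n_{i_0}}]_1$. Thus ${\bf A}\in{\bf P_n}(\cH)$ with $\boldsymbol\Delta_{\bf A}(I)>0$, i.e.\ ${\bf A}\in{\bf B_n}(\cH)$. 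I expect the main obstacle to be precisely this last passage, from the lower bound on the full defect to strict contractivity of the individual blocks $A_{i_0}$: the monotonicity $\boldsymbol\Delta_{\bf A}(I)\leq I-\Phi_{A_{i_0}}(I)$ is not formal and rests on the positivity of all intermediate partial defects, which is exactly why the completely positive Berezin-transform argument in the preliminary is essential.
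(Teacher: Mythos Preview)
Your proof is correct and follows the same logical cycle (i)$\Rightarrow$(ii)$\Rightarrow$(iii)$\Rightarrow$(i) together with (ii)$\Leftrightarrow$(iv) as the paper, with the same three key ingredients: the Harnack inequality (Theorem~\ref{H-ineq}) for (iii)$\Rightarrow$(i), the factorisation of $\boldsymbol{\cP}({\bf R},r{\bf A})$ through $(I-r\Lambda_i)^{-1}$ for (ii)$\Rightarrow$(iii), and Lemma~\ref{les} for the Poisson side.

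The differences are in the connective tissue, and in each case you are more self-contained than the paper. For (i)$\Rightarrow$(ii) the paper first reduces to the single-ball case $A_i\overset{H}{\sim}0$ and invokes \cite{Po-hyperbolic} to get $A_i\in[B(\cH)^{n_i}]_1$, hence $r(A_i)<1$; you instead apply $\overset{H}{\sim}$ directly to the Poisson kernel and read off $r(A_i)<1$ from Lemma~\ref{les}, avoiding the external reference. For (iii)$\Rightarrow$(i) the paper cites the Minkowski gauge result from \cite{Po-pluriharmonic-polyball} to obtain $r_0\in(0,1)$ with ${\bf A}\in r_0{\bf B_n}(\cH)^-$; your norm-continuity argument for $\boldsymbol\Delta_{r_0^{-1}{\bf A}}(I)$ accomplishes the same thing by hand. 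Finally, in (ii)$\Rightarrow$(iii) the paper stops at $\boldsymbol\Delta_{\bf A}(I)\geq (a/M^2)I$ and declares ${\bf A}\in{\bf B_n}(\cH)$, tacitly using that this forces each $A_{i_0}$ to be a strict row contraction; you make this step explicit via the monotonicity $\boldsymbol\Delta_{\bf A}(I)\leq I-\Phi_{A_{i_0}}(I)$, which indeed requires the positivity of the partial defects established in your preliminary. So the approaches coincide in substance; your version trades two external citations for short direct arguments and fills a detail the paper leaves implicit.
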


\begin{proof} Note that if $F:{\bf B}_{\bf n}(\cH)\to B(\cE)\otimes_{min} B(\cH)$ is a free $k$-pluriharmonic function then, for each $i\in \{1,\ldots, k\}$, the map
$$
{\bf X}_i\mapsto F(\underbrace{0,\ldots, 0}_{\text{${i-1}$
times}}, {\bf X}_i, 0,\ldots, 0), \qquad {\bf X}_i\in [B(\cH)^{n_i}]_1,
$$
is a pluriharmonic function on $[B(\cH)^{n_i}]_1$. Conversely, if $F_i:[B(\cH)^{n_i}]_1\to B(\cE)\otimes_{min} B(\cH)$ is a free pluriharmonic function on the open unit ball $[B(\cH)^{n_i}]_1$, then the map ${\bf X}=(X_1,\ldots, X_k)\mapsto F_i(X_i)$ is a free $k$-pluriharmonic function on ${\bf B_n}(\cH)$.  Consequently, if ${\bf A}=( A_1,\ldots,A_k)$ and $ {\bf B}=( B_1,\ldots,B_k)$ are in the closed  regular polyball ${\bf B}_{\bf n}(\cH)^-$ and
${\bf A} \overset{H}{\sim}\, {\bf B}$, then $A_i\overset{H}{\sim}\, B_i$  for any $i\in \{1,\ldots, k\}$.

Now, assume that ${\bf A}\overset{H}{\sim}\, 0$. Due to the  remark above, we must have ${A}_i\overset{H}{\sim}\, 0$ for any $i\in \{1,\ldots, k\}$. Applying   Theorem 1.6 from \cite{Po-hyperbolic} to $A_i$, we deduce that  $A_i\in [B(\cH)^{n_i}]_1$. In particular,   the joint spectral radius $r(A_i)<1$.
Since ${\bf A}\overset{H}{\sim}\, 0$ and the map  ${\bf X}\mapsto
\boldsymbol{\cP}({\bf R}, {\bf X})$ is a positive free $k$-pluriharmonic function
on ${\bf B_n}(\cH)$, there is $a>0$ such that
$\boldsymbol{\cP}({\bf R}, r{\bf A})\geq aI$ for any $r\in [0,1)$.  Therefore, item (ii) holds.

Now, we prove that (ii)$\implies$(iii). As in the proof of Lemma \ref{les}, we have

\begin{equation*}
\begin{split}
\boldsymbol{\cP}({\bf R}, r{\bf A})=\prod_{i=1}^k
 \left(I-r\Lambda_i\right)^{-1}\left(I\otimes \boldsymbol\Delta_{r{\bf A}}(I)\right)
 \prod_{i=1}^k
 \left(I-r\Lambda_i^*\right)^{-1},
\end{split}
\end{equation*}
where $\Lambda_i:={\bf R}_{i,1}^*\otimes A_{i,1}+\cdots +{\bf R}_{i,n_i}^*\otimes A_{i,n_i}$. Moreover,  the spectral radius of the operator  $\Lambda_i $ coincides with $r(A_i)<1$ and
$$
0<M:=\sup_{r\in [0,1)}\|\prod_{i=1}^k(I-r\Lambda_i)^{-1}\|<\infty.
$$
 Hence and  using the fact that $\boldsymbol{\cP}({\bf R}, r{\bf A})\geq aI$ for any $r\in [0,1)$, we obtain
 $$
I\otimes \boldsymbol\Delta_{r{\bf A}}(I)\geq \frac{c}{M^2} I\otimes I, \qquad r\in [0,1).
$$
Hence $\boldsymbol\Delta_{{\bf A}}(I)\geq \frac{c}{M^2}I$, which shows that ${\bf A}\in {\bf B}_{\bf n}(\cH)$. Therefore, item (iii) holds.
To prove the  implication (iii)$\implies$(i), assume that  ${\bf A}\in {\bf B_n}(\cH)$.
 Define
 $$
 \gamma_{\bf A}=m_{{\bf B}_{\bf n}}({\bf A}):=\inf \{ t>0:\ {\bf A}\in t{\bf B}_{\bf n}(\cH)\}.
 $$
 Due to Proposition 2.6 from \cite{Po-pluriharmonic-polyball}, we have $m_{{\bf B}_{\bf n}}({\bf A})<1$ and, consequently,
   $r{\bf A}\in \gamma_{\bf A}{\bf B_n}(\cH)^-$ for any $r\in [0,1)$. Applying the Harnack type inequality of Theorem \ref{H-ineq}, we obtain the inequalities
$$
 F(0)\left(\frac{1-\gamma_{\bf A}}{1+\gamma_{\bf A}}\right)^k\leq F(r{\bf A})\leq F(0)\left(\frac{1+\gamma_{\bf A}}{1-\gamma_{\bf A}}\right)^k
 $$
 for any  positive free $k$-pluriharmonic function $F$
on ${\bf B_n}(\cH)$  and any $r\in [0,1)$.
Consequently, ${\bf A}\overset{H}{\sim}\, 0$, which completes the proof of the implication (iii)$\implies$(i). Note that if ${\bf A}\overset{H}{\sim}\, 0$, then we automatically have ${\bf A}\overset{P}{\sim}\, 0$. On the other hand,  taking into account Lemma \ref{les}, one can easily see that (iv)$\implies$(ii). This completes the proof.
\end{proof}

\bigskip

\section{Hyperbolic metric on the Harnack parts of the closed  polyball}

In this section, we introduce a hyperbolic  type metric on the Harnack parts of the closed regular polyball and  show that it is invariant under the automorphism group of all free biholomorphic functions of the polyball. We provide a Schwarz-Pick type result for free holomorphic functions on regular polybals with respect to the hyperbolic metric.

Given ${\bf A}, {\bf B}\in {\bf B_n}(\cH)^-$ in the same Harnack part, i.e.
${\bf A}\,\overset{H}{\sim}\, {\bf B}$, we introduce
\begin{equation*}
 \omega_H({\bf A}, {\bf B}):=\inf\left\{ c > 1: \
{\bf A}\,\overset{H}{{\underset{c}\sim}}\, {\bf B}   \right\}.
\end{equation*}

\begin{lemma}\label{beta}
Let $\Delta$ be a Harnack part of ${\bf B_n}(\cH)^-$ and let ${\bf A}, {\bf B}, {\bf C}\in
\Delta$.
Then the following properties hold:
\begin{enumerate}
\item[(i)] $\omega_H({\bf A}, {\bf B})\geq 1$;
\item[(ii)] $\omega_H({\bf A}, {\bf B})=1$ if and only if ${\bf A}={\bf B}$;
\item[(iii)] $\omega_H({\bf A}, {\bf B})=\omega_H({\bf B}, {\bf A})$;
\item[(iv)] $\omega_H({\bf A}, {\bf C})\leq \omega_H({\bf A}, {\bf B}) \omega_H({\bf B}, {\bf C})$.
\end{enumerate}
\end{lemma}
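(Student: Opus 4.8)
The plan is to handle the four items in order of increasing difficulty, converting the Harnack relation into a statement about Berezin transforms via Proposition~\ref{equivalent2} whenever convenient. Item (i) is immediate, since the infimum defining $\omega_H({\bf A}, {\bf B})$ ranges over constants $c>1$. Item (iii) follows at once from the symmetry built into the definition of $\overset{H}{{\underset{c}\sim}}$: the condition ``${\bf A}\overset{H}{{\underset{c}\prec}}\, {\bf B}$ and ${\bf B}\overset{H}{{\underset{c}\prec}}\, {\bf A}$'' is unchanged when ${\bf A}$ and ${\bf B}$ are interchanged, so the two admissible sets of constants coincide and the infima agree.

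For item (iv) I would first record the monotonicity that makes the infimum well behaved: the admissible set $\{c>1:\ {\bf A}\overset{H}{{\underset{c}\sim}}\, {\bf B}\}$ is upward closed, because $F(r{\bf A})\leq c^2 F(r{\bf B})$ and $F\geq 0$ give the same inequality for every $c'\geq c$. Then, given $\varepsilon>0$, I would pick $c_1<\omega_H({\bf A},{\bf B})+\varepsilon$ with ${\bf A}\overset{H}{{\underset{c_1}\sim}}\, {\bf B}$ and $c_2<\omega_H({\bf B},{\bf C})+\varepsilon$ with ${\bf B}\overset{H}{{\underset{c_2}\sim}}\, {\bf C}$, and compose the defining inequalities: for every positive free $k$-pluriharmonic $F$ and every $r\in[0,1)$,
$$
F(r{\bf A})\leq c_1^2 F(r{\bf B})\leq c_1^2 c_2^2 F(r{\bf C}),
$$
together with the reversed chain, so that ${\bf A}\overset{H}{{\underset{c_1 c_2}\sim}}\, {\bf C}$. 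Hence $\omega_H({\bf A},{\bf C})\leq c_1 c_2$, and letting $\varepsilon\to 0$ yields the submultiplicativity in (iv).

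The substance of the lemma is item (ii). The forward implication is routine: if ${\bf A}={\bf B}$ then $F(r{\bf A})=F(r{\bf B})$ for every $F$, so ${\bf A}\overset{H}{{\underset{c}\sim}}\, {\bf B}$ for all $c>1$ and the infimum equals $1$. For the converse, which I expect to be the main obstacle, suppose $\omega_H({\bf A},{\bf B})=1$, so that ${\bf A}\overset{H}{{\underset{c}\sim}}\, {\bf B}$ for every $c>1$. By Proposition~\ref{equivalent2}, for each such $c$ both $c^2\boldsymbol\cB_{\bf B}-\boldsymbol\cB_{\bf A}$ and $c^2\boldsymbol\cB_{\bf A}-\boldsymbol\cB_{\bf B}$ are completely positive on $\text{\rm span}\{\boldsymbol\cA_{\bf n}^*\boldsymbol\cA_{\bf n}\}^{-\|\cdot\|}$. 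Taking the pointwise limit $c\to 1^+$ and using that the positive cone is closed, I conclude that $\boldsymbol\cB_{\bf B}-\boldsymbol\cB_{\bf A}$ and its negative are both positive, whence this map annihilates every positive element of the operator system and therefore vanishes; thus $\boldsymbol\cB_{\bf A}=\boldsymbol\cB_{\bf B}$ on the whole space.

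To finish, I would evaluate this identity on the generators. Since ${\bf S}_{i,j}=I^*{\bf S}_{i,j}\in \boldsymbol\cA_{\bf n}^*\boldsymbol\cA_{\bf n}$ and the Berezin transform satisfies $\boldsymbol\cB_{\bf X}({\bf S}_{i,j})=X_{i,j}$, the equality $\boldsymbol\cB_{\bf A}=\boldsymbol\cB_{\bf B}$ forces $A_{i,j}=B_{i,j}$ for all $i,j$, i.e. ${\bf A}={\bf B}$. The only delicate point is the passage $c\to 1^+$ and the closedness of the positive cone; should one wish to bypass Proposition~\ref{equivalent2} here, an equivalent route is to observe directly that $\frac{1}{c^2}F(r{\bf B})\leq F(r{\bf A})\leq c^2 F(r{\bf B})$ for all $c>1$ forces $F(r{\bf A})=F(r{\bf B})$, and then to specialize $F$ to the free pluriharmonic Poisson kernel to read off the coordinates of ${\bf A}$ and ${\bf B}$.
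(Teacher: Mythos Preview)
Your proof is correct. Items (i), (iii), and (iv) match the paper's argument essentially verbatim; your $\varepsilon$-argument in (iv) is a slight variant of the paper's approach, which instead observes directly that the infimum $\omega_H({\bf A},{\bf B})$ itself satisfies the defining inequalities (this is legitimate since the operator inequalities are closed conditions).

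For (ii) you take a genuinely different primary route. The paper does not invoke Proposition~\ref{equivalent2}; it specializes immediately to the free pluriharmonic Poisson kernel $\boldsymbol{\cP}({\bf R},\cdot)$, obtains $\boldsymbol{\cP}({\bf R}, r{\bf A})=\boldsymbol{\cP}({\bf R}, r{\bf B})$ for all $r\in[0,1)$, and then extracts $A_{i,j}=B_{i,j}$ by pairing against carefully chosen Fock-space vectors. Your approach through the Berezin transforms is cleaner at the extraction step (reading off $A_{i,j}=\boldsymbol\cB_{\bf A}({\bf S}_{i,j})$ is immediate), at the cost of importing the heavier Proposition~\ref{equivalent2}. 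The alternative route you sketch at the end---pass to the Poisson kernel directly---is precisely the paper's argument.
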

\begin{proof} First, note that (i) and (iii) are consequences of the definition of $\omega_H({\bf A}, {\bf B})$ and relation \eqref{def-H}. If $\omega_H({\bf A}, {\bf B})=1$, then there is a sequence $c_n>1$ with $c_n\to 1$ such that
\begin{equation*}
 \frac{1}{c_n^2}F(r{\bf B})\leq  F(r{\bf A})\leq c_n^2 F(r{\bf B})
\end{equation*}
 for any positive
free $k$-pluriharmonic function $F$  with operator-valued coefficients
and any $r\in [0,1)$.
In particular, since  the map  ${\bf X}\mapsto
\boldsymbol{\cP}({\bf R}, {\bf X})$ is a positive free $k$-pluriharmonic function
on ${\bf B_n}(\cH)$,  we have
$$
\frac{1}{c_n^2} \boldsymbol{\cP}({\bf R}, r{\bf B})\leq \boldsymbol{\cP}({\bf R}, r{\bf A})\leq c_n^2 \boldsymbol{\cP}({\bf R}, r{\bf B})
$$
for any $n\in \NN$ and $ r\in [0,1)$. Since $c_n\to 1$ we deduce that
$\boldsymbol{\cP}({\bf R}, r{\bf A})=\boldsymbol{\cP}({\bf R}, r{\bf A})$ for any $r\in [0,1)$.
For each $i\in \{1,\ldots, k\}$ and $j\in \{1,\ldots, n_i\}$, let $x:=\underbrace{1\otimes\cdots \otimes  1}_{\text{${i-1}$
times}}  \otimes\,  e_j^i\otimes 1\otimes \cdots \otimes 1$ and $y:=1\otimes \cdots \otimes 1$ be in $F^2(H_{n_1})\otimes\cdots \otimes  F^2(H_{n_k})$.
If $\alpha_i, \beta_i\in \FF_{n_i}^+$ with $|\alpha_i|=m_i^-$ , $|\beta_i|=m_i^+$, we have
$\left<{\bf R}_{1,\widetilde\alpha_1}^*\cdots {\bf R}_{k,\widetilde\alpha_k}^*{\bf R}_{1,\widetilde\beta_1}\cdots {\bf R}_{k,\widetilde\beta_k}x,y\right>
=1$  if $ \beta_s=g_0^s$    for $ s\in \{1,\ldots, k\}$,   and $\alpha_s=g_0^s$   for $ s\in \{1,\ldots, k\}\backslash\{i\}$   and  $\alpha_i=g_j^i$, while
$\left<{\bf R}_{1,\widetilde\alpha_1}^*\cdots {\bf R}_{k,\widetilde\alpha_k}^*{\bf R}_{1,\widetilde\beta_1}\cdots {\bf R}_{k,\widetilde\beta_k}x,y\right>
=0$ otherwise.
Consequently, we have
$$
\left<\boldsymbol{\cP}({\bf R}, {\bf X})(x\otimes h), y\otimes \ell\right>=\left<X_{i,j}h,\ell\right>
$$
for any ${\bf X}:=(X_1,\ldots, X_k)\in {\bf B}_{\bf n}(\cH)$ with $X_i=(X_{i,1},\ldots, X_{i,n_i})$ and any $h,\ell\in \cH$. Now, it is clear that relation
 $\boldsymbol{\cP}({\bf R}, r{\bf A})= \boldsymbol{\cP}({\bf R}, r{\bf B})$ implies ${\bf A}={\bf B}$, which shows that relation (ii) holds.

 Due to the definition \eqref{def-H}, for any positive free $k$-pluriharmonic function $F$ with operator-valued coefficients and any $r\in [0,1)$, we have
 \begin{equation*}
 \frac{1}{\omega_H({\bf A}, {\bf B})^2}F(r{\bf B})\leq  F(r{\bf A})\leq \omega_H({\bf A}, {\bf B})^2 F(r{\bf B})
\end{equation*}
and
\begin{equation*}
 \frac{1}{\omega_H({\bf B}, {\bf C})^2}F(r{\bf C})\leq  F(r{\bf B})\leq \omega_H({\bf B}, {\bf C})^2 F(r{\bf C}).
\end{equation*}
Combining these inequalities, we deduce that
\begin{equation*}
 \frac{1}{\omega_H({\bf A}, {\bf B})^2\omega_H({\bf B}, {\bf C})^2}F(r{\bf C})\leq  F(r{\bf A})\leq \omega_H({\bf A}, {\bf B})^2\omega_H({\bf B}, {\bf C})^2 F(r{\bf C}).
\end{equation*}
Hence, we obtain that $\omega_H({\bf A}, {\bf C})\leq \omega_H({\bf A}, {\bf B}) \omega_H({\bf B}, {\bf C})$, which completes the proof.
\end{proof}

Now, we can introduce  a  hyperbolic   type
metric
  on the Harnack parts of ${\bf B_n}(\cH)^-$.

\begin{proposition}\label{delta}
Let $\Delta$ be a Harnack part of ${\bf B_n}(\cH)^-$ and define
$\delta_H:\Delta\times \Delta \to \RR^+$ by setting
\begin{equation*}
 \delta_H({\bf A},{\bf B}):=\ln \omega_H({\bf A}, {\bf B}),\quad {\bf A}, {\bf B}\in \Delta.
\end{equation*}
Then $\delta_H$ is a metric on  $\Delta$.
\end{proposition}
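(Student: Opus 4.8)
The plan is to deduce the four metric axioms for $\delta_H$ directly from the four properties of $\omega_H$ recorded in Lemma \ref{beta}, using only that the logarithm is strictly increasing on $(0,\infty)$ and converts products into sums. Since all the analytic content—positivity, the identity of indiscernibles, symmetry, and submultiplicativity of $\omega_H$—has already been established there, Proposition \ref{delta} reduces to a purely formal verification.

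Concretely, first I would check non-negativity: by Lemma \ref{beta}(i) we have $\omega_H({\bf A},{\bf B})\geq 1$, hence $\delta_H({\bf A},{\bf B})=\ln\omega_H({\bf A},{\bf B})\geq 0$ for all ${\bf A},{\bf B}\in\Delta$. Next, the identity of indiscernibles follows from Lemma \ref{beta}(ii): since $\ln t=0$ precisely when $t=1$, we get $\delta_H({\bf A},{\bf B})=0$ if and only if $\omega_H({\bf A},{\bf B})=1$, that is, if and only if ${\bf A}={\bf B}$. Symmetry is immediate from Lemma \ref{beta}(iii), as $\delta_H({\bf A},{\bf B})=\ln\omega_H({\bf A},{\bf B})=\ln\omega_H({\bf B},{\bf A})=\delta_H({\bf B},{\bf A})$.

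Finally, for the triangle inequality I would invoke Lemma \ref{beta}(iv) together with the monotonicity of the logarithm:
$$
\delta_H({\bf A},{\bf C})=\ln\omega_H({\bf A},{\bf C})\leq \ln\bigl(\omega_H({\bf A},{\bf B})\,\omega_H({\bf B},{\bf C})\bigr)=\delta_H({\bf A},{\bf B})+\delta_H({\bf B},{\bf C}).
$$
There is no genuine obstacle remaining at this stage; the substantive work was done in Lemma \ref{beta}, namely the separation argument via the Poisson kernel $\boldsymbol{\cP}({\bf R},\cdot)$ yielding item (ii) and the submultiplicativity yielding item (iv). The only point worth emphasizing is that $\delta_H$ is well defined and finite on $\Delta\times\Delta$ precisely because ${\bf A}$ and ${\bf B}$ are assumed to lie in the same Harnack part, so that $\omega_H({\bf A},{\bf B})<\infty$; this is exactly what confines the metric to a single Harnack part rather than all of ${\bf B_n}(\cH)^-$.
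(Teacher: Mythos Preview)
Your proof is correct and follows the same approach as the paper, which simply states that the result follows from Lemma \ref{beta}. You have merely spelled out explicitly how each metric axiom is obtained from the corresponding property of $\omega_H$, which is precisely what the paper leaves implicit.
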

\begin{proof} The result follows from  Lemma \ref{beta}.
\end{proof}

 \begin{theorem} \label{compo} Let $F$ be a  free $k$-pluriharmonic function on the regular polyball ${\bf B}_{\bf n}$ with operator coefficients in $B(\cE)$ and let
 $G=(G_1,\ldots, G_k):{\bf B}_{\bf n}\to {\bf B}_{\bf n}$ be a free holomorphic function  such that each
$G_{i}:[B(\cH)^{n_i}]_1\to [B(\cH)^{n_i}]_1$ is a free holomorphic function. Then $F\circ G$ is a free $k$-pluriharmonic function on  ${\bf B}_{\bf n}$.
 \end{theorem}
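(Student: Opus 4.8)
The plan is to pass to the universal model $\bf S$ and exploit the characterization of free $k$-pluriharmonic functions via $k$-multi-Toeplitz operators and the noncommutative Berezin transform from \cite{Po-pluriharmonic-polyball}. Write $F$ in its series form with coefficients $A_{(\alpha_1,\ldots,\alpha_k;\beta_1,\ldots,\beta_k)}\in B(\cE)$. Composing with $G$ replaces each factor ${\bf X}_{i,\gamma_i}$ by $G_{i,\gamma_i}(X_i)=G_{i,j_1}(X_i)\cdots G_{i,j_p}(X_i)$, a product of free holomorphic functions of $X_i$, hence itself free holomorphic. Setting $\Phi_{\boldsymbol\alpha}:=\prod_{i=1}^k G_{i,\alpha_i}$ and $\Psi_{\boldsymbol\beta}:=\prod_{i=1}^k G_{i,\beta_i}$, each of which is a product of free holomorphic functions and therefore free holomorphic on ${\bf B}_{\bf n}$, and using that in every monomial the holomorphic factors precede the co-holomorphic ones, each composed monomial factors as $\Phi_{\boldsymbol\alpha}({\bf X})\,\Psi_{\boldsymbol\beta}({\bf X})^*$, i.e.\ a free holomorphic function times the adjoint of one.

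First I would settle convergence at the model. For $t\in[0,1)$ the tuple $t{\bf S}$ lies in the open polyball (since $\boldsymbol\Delta_{t{\bf S}}(I)>0$), so $G(t{\bf S})\in{\bf B}_{\bf n}$ because $G$ maps the polyball into itself; hence the defining series of $F$ converges in the operator norm at the point $G(t{\bf S})$ and
\[
(F\circ G)(t{\bf S})=\sum_{\boldsymbol\alpha,\boldsymbol\beta}A_{(\alpha_1,\ldots,\alpha_k;\beta_1,\ldots,\beta_k)}\otimes \Phi_{\boldsymbol\alpha}(t{\bf S})\,\Psi_{\boldsymbol\beta}(t{\bf S})^*
\]
is a norm-convergent sum. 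Each $\Phi_{\boldsymbol\alpha}(t{\bf S})$ and $\Psi_{\boldsymbol\beta}(t{\bf S})$ belongs to $B(\cE)\otimes_{min}\boldsymbol\cA_{\bf n}$, so each summand is an element of the polyball algebra times the adjoint of one, exactly as for $F(t{\bf S})$ itself; by the description of $\boldsymbol{\cT_{\bf n}}$ recalled above, each summand is $k$-multi-Toeplitz, and by norm convergence together with the closedness of $\boldsymbol{\cT_{\bf n}}$ the whole operator $(F\circ G)(t{\bf S})$ is $k$-multi-Toeplitz.

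It then remains to extract the canonical $k$-pluriharmonic series and transfer it from the model to an arbitrary point. Expanding each $\Phi_{\boldsymbol\alpha}(t{\bf S})$ and $\Psi_{\boldsymbol\beta}(t{\bf S})$ in the generators $\bf S$ and regrouping by the degrees of the resulting $\bf S$-monomials produces coefficients $B_{\boldsymbol\mu,\boldsymbol\nu}\in B(\cE)$—norm-convergent combinations of the $A$'s and the Taylor coefficients of the $G_i$—so that $(F\circ G)(t{\bf S})$ takes the canonical form $\sum_{\boldsymbol\mu,\boldsymbol\nu}B_{\boldsymbol\mu,\boldsymbol\nu}\otimes {\bf S}_{1,\mu_1}\cdots{\bf S}_{k,\mu_k}{\bf S}_{1,\nu_1}^*\cdots{\bf S}_{k,\nu_k}^*$. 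Applying the noncommutative Berezin transform $\mathrm{id}\otimes\boldsymbol\cB_{\bf X}$ at ${\bf X}\in{\bf B}_{\bf n}(\cH)$, which is completely positive, norm-continuous, and sends ${\bf S}_{\boldsymbol\mu}{\bf S}_{\boldsymbol\nu}^*$ to ${\bf X}_{\boldsymbol\mu}{\bf X}_{\boldsymbol\nu}^*$, reproduces $(F\circ G)({\bf X})$ with the very same coefficients $B_{\boldsymbol\mu,\boldsymbol\nu}$ (using the intertwining of free holomorphic functions with the Berezin kernels), which identifies $F\circ G$ as a free $k$-pluriharmonic function.

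The main obstacle is the third step: justifying that the formal regrouping of the doubly-indexed series into the canonical form is legitimate—that the rearranged series converges in norm and represents the same operator both at $t{\bf S}$ and at ${\bf X}$—and that the family $\{(F\circ G)(t{\bf S})\}_{t\in[0,1)}$ genuinely meets the hypotheses of the structure theorem of \cite{Po-pluriharmonic-polyball}, so that the reconstruction $(F\circ G)({\bf X})=(\mathrm{id}\otimes\boldsymbol\cB_{\bf X})[(F\circ G)(t{\bf S})]$ actually holds. I expect this to be controlled by the facts that $G$ maps ${\bf B}_{\bf n}$ into ${\bf B}_{\bf n}$ and that, writing $F$ as a combination of positive free $k$-pluriharmonic functions, the Harnack-type growth estimate of Theorem \ref{H-ineq} bounds the intermediate pieces uniformly on $r{\bf B}_{\bf n}(\cH)^-$, thereby legitimizing the interchange of summations.
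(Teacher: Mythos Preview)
Your overall strategy---evaluate at the universal model, verify the $k$-multi-Toeplitz property, and then invoke the structure theorem from \cite{Po-pluriharmonic-polyball}---is indeed the paper's route. Two points, however, need attention.

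First, a minor slip: the description of $\boldsymbol{\cT_{\bf n}}$ is in terms of $f^*g$, not $gf^*$, and $gf^*$ with $f,g\in\boldsymbol\cA_{\bf n}$ is \emph{not} $k$-multi-Toeplitz in general (e.g.\ $SS^*$ when $k=n_1=1$). The summands here \emph{are} $k$-multi-Toeplitz, but for the specific reason that for each $i$ either $\alpha_i$ or $\beta_i$ is trivial, so the $i$-th factor is purely holomorphic or purely anti-holomorphic in ${\bf S}_i$; the paper checks ${\bf R}_{i,s}^*T{\bf R}_{i,t}=\delta_{st}T$ directly using this structure. Your justification as written does not cover this.

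Second, and more seriously, the heart of the argument is missing. Once you know that $(F\circ G)(r{\bf S})$ is $k$-multi-Toeplitz for each $r\in[0,1)$, Theorem~1.5 of \cite{Po-pluriharmonic-polyball} furnishes a Fourier representation with coefficients $B^{(r)}_{(\boldsymbol\sigma;\boldsymbol\omega)}$ that are, a priori, $r$-dependent. The entire content of the proof is to show that these coefficients do \emph{not} depend on $r$; only then does a single pluriharmonic series emerge. Your ``regrouping'' paragraph asserts this implicitly but gives no mechanism, and the proposed remedy via Theorem~\ref{H-ineq} is a red herring: Harnack inequalities bound the size of positive pluriharmonic functions but say nothing about the $r$-independence of Fourier coefficients of a general $k$-multi-Toeplitz family. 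The paper handles this by computing $\langle F(G(r{\bf S}))(h\otimes x),\ell\otimes y\rangle$ for suitable basis vectors $x,y$ and observing that, because each $G_i$ depends only on ${\bf S}_i$, the inner products $\langle G_{i,\beta_i}(r{\bf S}_i)^*x_i, G_{i,\alpha_i}(r{\bf S}_i)^*y_i\rangle$ pick up exactly the factor $r^{|\omega_i|}$ (or $r^{|\sigma_i|}$) times an $r$-independent constant; matching against the formula $r^{\sum(|\sigma_i|+|\omega_i|)}\langle B^{(r)}_{(\boldsymbol\sigma;\boldsymbol\omega)}h,\ell\rangle$ then forces $B^{(r)}$ to be constant in $r$. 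This computation is precisely where the hypothesis that each $G_i$ is a function of the $i$-th block alone is used, and it is the step your proposal lacks.
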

 \begin{proof} According to \cite{Po-pluriharmonic-polyball},  $F$ is a free $k$-pluriharmonic function on the regular polyball ${\bf B}_{\bf n}$ with operator coefficients in $B(\cE)$ if and only if
  there exist  coefficients $A_{(\alpha_1,\ldots,\alpha_k;\beta_1,\ldots, \beta_k)}\in B(\cE)$ with  $\alpha_i,\beta_i\in \FF_{n_i}^+$, $|\alpha_i|=m_i^-, |\beta_i|=m_i^+$
such that, for any $r\in [0,1)$,
$$
F(r{\bf S})= \sum_{m_1\in \ZZ}\cdots \sum_{m_k\in \ZZ} \sum_{{\alpha_i,\beta_i\in \FF_{n_i}^+, i\in \{1,\ldots, k\}}\atop{|\alpha_i|=m_i^-, |\beta_i|=m_i^+}}A_{(\alpha_1,\ldots,\alpha_k;\beta_1,\ldots, \beta_k)}
\otimes r^{\sum_{i=1}^k(|\alpha_i|+|\beta_i|)}
{\bf S}_{1,\alpha_1}\cdots {\bf S}_{k,\alpha_k}{\bf S}_{1,\beta_1}^*\cdots {\bf S}_{k,\beta_k}^*
$$
where the multi-series is convergent in the operator norm topology and its sum does not depend on the order of the series. In this case,   $F(r{\bf S})$  is a $k$-multi-Toeplitz operator   in
$$
\text{\rm span} \{f^*g:\ f, g\in B(\cE)\otimes_{min} \boldsymbol\cA_{\bf n}\}^{-\|\cdot\|},
$$
where   $\boldsymbol\cA_{\bf n}$ is the polyball algebra.

Let $G=(G_1,\ldots, G_k):{\bf B}_{\bf n}\to {\bf B}_{\bf n}$  be a free holomorphic function with $G_i=(G_{i,1},\ldots, G_{i, n_i})$, where each
$G_{i}:[B(\cH)^{n_i}]_1\to [B(\cH)^{n_i}]_1$ is a free holomorphic function.
According to Proposition 2.2 from \cite{Po-automorphisms-polyball},
$\text{\rm range} \, G\subseteq {\bf B}_{\bf n}(\cH)$ if and only if
$G(r{\bf S})\in {\bf B}_{\bf n}(\otimes_{i=1}^k F^2(H_{n_i}))$ for any $r\in [0,1)$, where ${\bf S}=({\bf S}_1,\ldots, {\bf S}_n)$,  ${\bf S}_i=({\bf S}_{i,1},\ldots, {\bf S}_{i,n_i})$,  is the universal model of the regular polyball ${\bf B}_{\bf n}$. Consequently, $F(G(r{\bf S}))$ is equal to
\begin{equation*}
\begin{split}
  \sum_{m_1\in \ZZ}\cdots \sum_{m_k\in \ZZ} \sum_{{\alpha_i,\beta_i\in \FF_{n_i}^+, i\in \{1,\ldots, k\}}\atop{|\alpha_i|=m_i^-, |\beta_i|=m_i^+}}A_{(\alpha_1,\ldots,\alpha_k;\beta_1,\ldots, \beta_k)}
\otimes
G_{1,\alpha_1}(r{\bf S}_1)\cdots {G}_{k,\alpha_k}(r{\bf S}_k){G}_{1,\beta_1}(r{\bf S}_1)^*\cdots {G}_{k,\beta_k}(r{\bf S}_k)^*,
\end{split}
\end{equation*}
where the multi-series  is convergent in the operator norm topology.
  Now, we prove that $F(G(r{\bf S}))$ is a $k$-multi-Toeplitz operator  with respect to the universal model ${\bf R}$.   It is enough to show that
  $$
  T:=G_{1,\alpha_1}(r{\bf S}_1)\cdots {G}_{k,\alpha_k}(r{\bf S}_k){G}_{1,\beta_1}(r{\bf S}_1)^*\cdots {G}_{k,\beta_k}(r{\bf S}_k)^*
  $$
  is $k$-multi-Toeplitz operator  with respect to  ${\bf R}$, when
   $\alpha_i,\beta_i\in \FF_{n_i}^+$, $|\alpha_i|=m_i^-, |\beta_i|=m_i^+$.
   First, note that, for each $i\in \{1,\ldots, k\}$ and $s,t\in \{1,\ldots, n_i\}$
   \begin{equation*}
   {\bf R}_{i,s}^*G_{i,\alpha_i}(r{\bf S}_i) G_{i,\beta_i}(r{\bf S}_i)^* {\bf R}_{i,t}=\delta_{st}G_{i,\alpha_i}(r{\bf S}_i) G_{i,\beta_i}(r{\bf S}_i)^*.
    \end{equation*}
 Hence, we deduce that
 \begin{equation*}
 \begin{split}
 {\bf R}_{i,s}^*T {\bf R}_{i,t}&=\left[\prod_{p\in \{1,\ldots, k\}, p\neq i} G_{p,\alpha_p}(r{\bf S}_p)\right] {\bf R}_{i,s}^*G_{i,\alpha_i}(r{\bf S}_i) G_{i,\beta_i}(r{\bf S}_i)^* {\bf R}_{i,t}\left[\prod_{p\in \{1,\ldots, k\}, p\neq i} G_{p,\alpha_p}(r{\bf S}_p)\right]^*\\
 &=\delta_{st} T
 \end{split}
 \end{equation*}
 for each $i\in \{1,\ldots, k\}$, which proves our assertion.

 Using  Theorem 1.5   from \cite{Po-pluriharmonic-polyball} we deduce that, for each $r\in [0,1)$,   $F(G(r{\bf S}))$   has a unique Fourier representation
 $$
\varphi({\bf S}):= \sum_{m_1\in \ZZ}\cdots \sum_{m_k\in \ZZ} \sum_{{\sigma_i,\omega_i\in \FF_{n_i}^+, i\in \{1,\ldots, k\}}\atop{|\sigma_i|=m_i^-, |\omega_i|=m_i^+}}B^{(r)}_{(\sigma_1,\ldots,\sigma_k;\omega_1,\ldots, \omega_k)}
\otimes r^{\sum_{i=1}^k(|\sigma_i|+|\omega_i|)}
{\bf S}_{1,\sigma_1}\cdots {\bf S}_{k,\sigma_k}{\bf S}_{1,\omega_1}^*\cdots {\bf S}_{k,\omega_k}^*
$$
 and
 $$
\varphi(t{\bf S}):= \sum_{m_1\in \ZZ}\cdots \sum_{m_k\in \ZZ} \sum_{{\sigma_i,\omega_i\in \FF_{n_i}^+, i\in \{1,\ldots, k\}}\atop{|\sigma_i|=m_i^-, |\omega_i|=m_i^+}}B^{(r)}_{(\sigma_1,\ldots,\sigma_k;\omega_1,\ldots, \omega_k)}
\otimes (rt)^{\sum_{i=1}^k(|\sigma_i|+|\omega_i|)}
{\bf S}_{1,\sigma_1}\cdots {\bf S}_{k,\sigma_k}{\bf S}_{1,\omega_1}^*\cdots {\bf S}_{k,\omega_k}^*,
$$
  is convergent    in the operator norm topology for any $t\in [0,1)$.  Moreover
  $F(G(r{\bf S}))=\text{\rm SOT-}\lim_{t\to 1} \varphi(t{\bf S})$ and
 \begin{equation}\label{BFG}
 r^{\sum_{i=1}^k(|\sigma_i|+|\omega_i|)}\left<B^{(r)}_{(\sigma_1,\ldots,\sigma_k;
 \omega_1,\ldots, \omega_k)}h,\ell\right>=\left<F(G((r{\bf S}))(h\otimes x), \ell\otimes y\right>,\qquad  h,\ell\in \cE,
\end{equation}
  where $x:=x_1\otimes \cdots \otimes x_k$, $y=y_1\otimes \cdots \otimes y_k$ with \begin{equation*}
\begin{cases} x_i=e^i_{\omega_i} \text{ and } y_i=1&; \quad \text{if } m_i\geq 0\\
 x_i=1 \text{ and } y_i=e^i_{\sigma_i}&; \quad \text{if } m_i<0
 \end{cases}
 \end{equation*}
 for every $i\in \{1,\ldots, k\}$.
We need to show that each coefficient
$B^{(r)}_{(\sigma_1,\ldots,\sigma_k; \omega_1,\ldots, \omega_k)}$
 does not depend on $r\in [0,1)$. Indeed, using the relations above, we deduce that
 \begin{equation}\label{Prod}
 \begin{split}
 &\left<F(G((r{\bf S}))(h\otimes x), \ell\otimes y\right>\\
 &\qquad =
  \sum_{m_1\in \ZZ}\cdots \sum_{m_k\in \ZZ} \sum_{{\alpha_i,\beta_i\in \FF_{n_i}^+, i\in \{1,\ldots, k\}}\atop{|\alpha_i|=m_i^-, |\beta_i|=m_i^+}}\left<A_{(\alpha_1,\ldots,\alpha_k;\beta_1,\ldots, \beta_k)}h,\ell\right>
   \prod_{i=1}^k \left<G_{i,\beta_i}(r{\bf S}_i)^* x_i, G_{i,\alpha_i}(r{\bf S}_i)^* y_i\right>.
 \end{split}
 \end{equation}
On the other hand, note that if $m_i\geq 0$, then $x_i=e_{\omega_i}^i$ and $y_i=1$. Consequently, we have
\begin{equation*}
 \begin{split}
\left<G_{i,\beta_i}(r{\bf S}_i)^* x_i, G_{i,\alpha_i}(r{\bf S}_i)^* y_i\right>
&=\left<G_{i,\beta_i}(r{\bf S}_i)^* x_i, \overline{G_{i,\alpha_i}(0)} \right>\\
&=G_{i,\alpha_i}(0)\left<e_{\omega_i}^i, G_{i,\beta_i}(r{\bf S}_i)1\right>\\
&=r^{|\omega_i|} M(\alpha_i, \beta_i, \omega_i),
\end{split}
 \end{equation*}
where $M(\alpha_i, \beta_i, \omega_i)$ is a constant which does not depend on $r$.
 Similarly, if  $m_i<0$, we deduce that
 $$\left<G_{i,\beta_i}(r{\bf S}_i)^* x_i, G_{i,\alpha_i}(r{\bf S}_i)^* y_i\right>
 =r^{|\sigma_i|} M(\alpha_i, \beta_i, \sigma_i),
 $$
where $M(\alpha_i, \beta_i, \sigma_i)$ is a constant which does not depend on $r$.
Now, using relations \eqref{BFG} and \eqref{Prod}, one can see that  each coefficient
$B^{(r)}_{(\sigma_1,\ldots,\sigma_k; \omega_1,\ldots, \omega_k)}$
 does not depend on $r\in [0,1)$. Therefore we can write
 $B_{(\sigma_1,\ldots,\sigma_k; \omega_1,\ldots, \omega_k)}=B^{(r)}_{(\sigma_1,\ldots,\sigma_k; \omega_1,\ldots, \omega_k)}$.
Since, due to the considerations above, the multi-series
 $$
  \sum_{m_1\in \ZZ}\cdots \sum_{m_k\in \ZZ} \sum_{{\sigma_i,\omega_i\in \FF_{n_i}^+, i\in \{1,\ldots, k\}}\atop{|\sigma_i|=m_i^-, |\omega_i|=m_i^+}}B_{(\sigma_1,\ldots,\sigma_k;\omega_1,\ldots, \omega_k)}
\otimes (rt)^{\sum_{i=1}^k(|\sigma_i|+|\omega_i|)}
{\bf S}_{1,\sigma_1}\cdots {\bf S}_{k,\sigma_k}{\bf S}_{1,\omega_1}^*\cdots {\bf S}_{k,\omega_k}^*,
$$
  is convergent    in the operator norm topology for any $t, r\in [0,1)$, we conclude that  $F\circ G$ is a free $k$-pluriharmonic function.
  The proof is complete.
\end{proof}

 The following result is a Schwarz-Pick lemma for free holomorphic functions on the regular polyball ${\bf B_n}$
with operator-valued coefficients, with respect to the hyperbolic
metric.

\begin{theorem} \label{S-P} Let ${\bf G}:{\bf B_n}(\cH)\to{\bf B_n}(\cH)^-$ be  free holomorphic functions such that
$${\bf G}({\bf X}):=(G_1({X}_1),\ldots, G_k({ X}_k))\in {\bf B_n}(\cH)^-,\qquad
{\bf X}:=(X_1,\ldots, X_k)\in {\bf B_n}(\cH),
$$
where
  $G_i:=(G_{i,1},\ldots, G_{i,n_i}):[B(\cH)^{n_i}]_1\to [B(\cH)^{n_i}]_1^-$ and each $G_{i,j}:[B(\cH)^{n_i}]_1\to B(\cH)$ is a free holomorphic function on $[B(\cH)^{n_i}]_1$. If ${\bf X}, {\bf Y}\in {\bf B_n}(\cH)$,
then $G({\bf X})\,\overset{H}{{ \sim}}\, G({\bf Y})$ and
$$
\delta_H({\bf G}({\bf X}), {\bf G}({\bf Y}))\leq \delta_H({\bf X}, {\bf Y}),
$$
where $\delta_H$ is the hyperbolic  metric  defined on  the Harnack parts of  the closed   polyball ${\bf B_n}(\cH)^-$.
\end{theorem}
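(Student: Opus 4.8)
The plan is to reduce the whole statement to two results already available: the composition theorem (Theorem \ref{compo}) and the fact, proved in Theorem \ref{foias2}, that every point of the open polyball lies in the Harnack part of the origin. First I would observe that since ${\bf X},{\bf Y}\in {\bf B_n}(\cH)$, Theorem \ref{foias2} gives ${\bf X}\overset{H}{\sim}\,0$ and ${\bf Y}\overset{H}{\sim}\,0$, so by transitivity of $\overset{H}\sim$ we get ${\bf X}\overset{H}{\sim}\,{\bf Y}$; in particular there is $c>1$ with ${\bf X}\overset{H}{{\underset{c}\sim}}\,{\bf Y}$. Everything then follows once I establish the monotonicity implication $(\dagger)$:
$$
{\bf X}\overset{H}{{\underset{c}\sim}}\,{\bf Y}\ \Longrightarrow\ {\bf G}({\bf X})\overset{H}{{\underset{c}\sim}}\,{\bf G}({\bf Y})\qquad\text{for every }c>1 .
$$
Indeed, $(\dagger)$ shows at once that ${\bf G}({\bf X})\overset{H}{\sim}\,{\bf G}({\bf Y})$ (so that $\delta_H({\bf G}({\bf X}),{\bf G}({\bf Y}))$ is defined), and it yields the inclusion $\{c>1:{\bf X}\overset{H}{{\underset{c}\sim}}\,{\bf Y}\}\subseteq\{c>1:{\bf G}({\bf X})\overset{H}{{\underset{c}\sim}}\,{\bf G}({\bf Y})\}$; passing to the infimum in the definition of $\omega_H$ and then taking logarithms gives $\delta_H({\bf G}({\bf X}),{\bf G}({\bf Y}))\le \delta_H({\bf X},{\bf Y})$.

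To prove $(\dagger)$, I would fix $c>1$ with ${\bf X}\overset{H}{{\underset{c}\sim}}\,{\bf Y}$, a positive free $k$-pluriharmonic function $\Phi$ with operator-valued coefficients, and a radius $r\in[0,1)$. Since ${\bf G}$ maps ${\bf B_n}(\cH)$ into the closed polyball ${\bf B_n}(\cH)^-$ and $r<1$, the scaled map $r{\bf G}=(rG_1,\ldots,rG_k)$ carries ${\bf B_n}(\cH)$ into the open polyball, each component $rG_i$ mapping $[B(\cH)^{n_i}]_1$ into $[B(\cH)^{n_i}]_1$; thus $r{\bf G}$ satisfies the hypotheses of Theorem \ref{compo}. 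Hence $F:=\Phi\circ(r{\bf G})$ is free $k$-pluriharmonic on ${\bf B}_{\bf n}$, and it is positive because $F({\bf W})=\Phi(r{\bf G}({\bf W}))\ge 0$ for every ${\bf W}\in{\bf B_n}(\cH)$. Applying the domination ${\bf X}\overset{H}{{\underset{c}\sim}}\,{\bf Y}$ to this admissible test function $F$ gives, for every $s\in[0,1)$,
$$
\frac{1}{c^2}\,\Phi\left(r{\bf G}(s{\bf Y})\right)\le \Phi\left(r{\bf G}(s{\bf X})\right)\le c^2\,\Phi\left(r{\bf G}(s{\bf Y})\right).
$$

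The last step would be to let $s\to 1^-$. Since ${\bf X}\in{\bf B_n}(\cH)$ is an interior point and $s{\bf X}\to{\bf X}$ in norm, the norm-continuity of the free holomorphic map ${\bf G}$ on the open polyball gives ${\bf G}(s{\bf X})\to{\bf G}({\bf X})$, whence $r{\bf G}(s{\bf X})\to r{\bf G}({\bf X})$, a point of the open polyball because $r<1$; the norm-continuity of $\Phi$ there then yields $\Phi(r{\bf G}(s{\bf X}))\to\Phi(r{\bf G}({\bf X}))$, and likewise for ${\bf Y}$. Passing to the limit in the displayed inequalities produces
$$
\frac{1}{c^2}\,\Phi\left(r{\bf G}({\bf Y})\right)\le \Phi\left(r{\bf G}({\bf X})\right)\le c^2\,\Phi\left(r{\bf G}({\bf Y})\right),
$$
and since $\Phi$ and $r\in[0,1)$ were arbitrary, this is exactly ${\bf G}({\bf X})\overset{H}{{\underset{c}\sim}}\,{\bf G}({\bf Y})$, establishing $(\dagger)$.

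The one point requiring genuine care — and where I expect the real content to sit — is the verification that $\Phi\circ(r{\bf G})$ is a bona fide \emph{positive} free $k$-pluriharmonic function: this is precisely the role of Theorem \ref{compo}, while the scaling by $r$ is the device that pushes the range of ${\bf G}$ off the boundary ${\bf B_n}(\cH)^-$ into the open polyball, where both Theorem \ref{compo} applies and $\Phi$ is defined and continuous. The subsequent $s\to1^-$ passage is routine once the norm-continuity of free holomorphic and free pluriharmonic functions on the open polyball is invoked. An essentially equivalent route would rephrase $(\dagger)$, via the Berezin-transform characterization of Proposition \ref{equivalent2}, as the complete positivity of $c^2\boldsymbol\cB_{{\bf G}({\bf Y})}-\boldsymbol\cB_{{\bf G}({\bf X})}$ and of its symmetric partner; but the composition argument above seems the most direct.
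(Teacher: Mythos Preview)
Your proof is correct and follows essentially the same route as the paper's: both use Theorem~\ref{foias2} to place ${\bf X},{\bf Y}$ in the same Harnack part, invoke Theorem~\ref{compo} on the scaled map $r{\bf G}$ (the paper writes $\gamma{\bf G}$) to make $\Phi\circ(r{\bf G})$ an admissible positive free $k$-pluriharmonic test function, apply the Harnack domination at radii $s\in[0,1)$, and pass to the limit $s\to1^-$ by norm continuity on the open polyball. Your write-up is in fact somewhat more explicit than the paper's about why $r{\bf G}$ lands in the \emph{open} polyball and about the infimum step yielding the metric inequality.
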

\begin{proof}
Let $F:{\bf B}_{\bf n}(\cH)\to B(\cE)\otimes_{min} B(\cH)$ be a positive free $k$-pluriharmonic function on ${\bf B}_{\bf n}(\cH)$ with coefficients in $B(\cE)$.
 Due to Theorem \ref{compo}, the map
 $F\circ \gamma {\bf G}$  is a positive  free $k$-pluriharmonic function on ${\bf B}_{\bf n}(\cH)$ for any $\gamma\in [0,1)$.
 If   ${\bf X}, {\bf Y}\in {\bf B_n}(\cH)$, Theorem \ref{foias2} shows that ${\bf A}\,\overset{H}{{\underset{c}\sim}}\, {\bf B}$ for some $c\geq 1$. Consequently,
 $$
 \frac{1}{c^2}(F\circ \gamma{ \bf G})(r{\bf Y})\leq  (F\circ \gamma{ \bf G})(r{\bf X})\leq c^2 (F\circ \gamma{ \bf G})(r{\bf Y})
 $$
 for any $\gamma,r\in [0,1)$. Taking $r\to 1$ and using the continuity of $F\circ \gamma {\bf G}$ on ${\bf B_n}(\cH)$  in the operator norm topology, we deduce that $$
 \frac{1}{c^2}(F\circ \gamma{ \bf G})({\bf Y})\leq  (F\circ \gamma{ \bf G})({\bf X})\leq c^2 (F\circ \gamma{ \bf G})({\bf Y})
 $$
 for any $\gamma\in [0,1)$. Hence ${ \bf G}({\bf X})\overset{H}{{\underset{c}\prec}}\, { \bf G}({\bf Y})$. Using the definition of the hyperbolic  metric  defined on  the Harnack parts of  the closed   polyball ${\bf B_n}(\cH)^-$, one can complete the proof.
\end{proof}

 The next result is a    Schwarz-Pick lemma for free holomorphic functions from  the regular polyball ${\bf B_n}(\cH)$ to the unit ball $[B(\cH)^m]_1$,
  with respect to the hyperbolic
metric.

\begin{proposition}\label{SP1} Let $\Phi=(\Phi_1,\ldots, \Phi_m):{\bf B}_{\bf n} (\cH)\to   [B(\cH)^m]_1^-$  be a  free holomorphic function on the regular polyball.
If ${\bf X}, {\bf Y}\in {\bf B_n}(\cH)$,
then $\Phi({\bf X})\,\overset{H}{{ \sim}}\, \Phi({\bf Y})$ and
$$
\delta_H(\Phi({\bf X}), \Phi({\bf Y}))\leq \delta_H({\bf X}, {\bf Y}),
$$
where $\delta_H$ is the hyperbolic  metric  defined on  the Harnack parts of $[B(\cH)^m]_1^-$ and  on the polyball ${\bf B_n}(\cH)$, respectively.
\end{proposition}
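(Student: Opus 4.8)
The plan is to follow the template of Theorem~\ref{S-P}, the only genuinely new ingredient being a composition result adapted to the fact that the target is now the ball $[B(\cH)^m]_1^-$ (the regular polyball with $k=1$ and $n_1=m$) rather than the polyball ${\bf B_n}(\cH)$ itself. First I would fix a positive free pluriharmonic function $F:[B(\cH)^m]_1\to B(\cE)\otimes_{min}B(\cH)$ and, for each $\gamma\in[0,1)$, form the composition $F\circ\gamma\Phi$. Since $\Phi$ maps ${\bf B_n}(\cH)$ into the closed ball and $\gamma<1$, the tuple $\gamma\Phi({\bf X})$ lies in the open ball $[B(\cH)^m]_1$, so $F(\gamma\Phi({\bf X}))\geq 0$ is well defined for every ${\bf X}\in {\bf B_n}(\cH)$.

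The crucial step, and the one demanding the most care, is to show that $F\circ\gamma\Phi$ is a positive free $k$-pluriharmonic function on ${\bf B_n}(\cH)$. This is the analogue of Theorem~\ref{compo}, but here a function pluriharmonic on a \emph{single} ball is composed with a holomorphic map from the $k$-polyball into that ball, so the index $k$ is not preserved and Theorem~\ref{compo} cannot be quoted verbatim. I would instead mimic its proof: writing $F(Z)=\sum_{q\in\ZZ}\sum_{|\alpha|=q^-,|\beta|=q^+}A_{(\alpha;\beta)}\otimes Z_\alpha Z_\beta^*$ and substituting $Z=\gamma\Phi(r{\bf S})$, each term becomes $A_{(\alpha;\beta)}\otimes\gamma^{|\alpha|+|\beta|}\Phi_\alpha(r{\bf S})\Phi_\beta(r{\bf S})^*$. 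The key observation is that each $\Phi_\alpha(r{\bf S})$ lies in the algebra generated by $\{{\bf S}_{i,j}\}$ and hence commutes with every ${\bf R}_{i,s}$; combined with ${\bf R}_{i,s}^*{\bf R}_{i,t}=\delta_{st}I$ this gives ${\bf R}_{i,s}^*\Phi_\alpha(r{\bf S})\Phi_\beta(r{\bf S})^*{\bf R}_{i,t}=\delta_{st}\Phi_\alpha(r{\bf S})\Phi_\beta(r{\bf S})^*$, so each term, and therefore $F(\gamma\Phi(r{\bf S}))$, is a $k$-multi-Toeplitz operator with respect to ${\bf R}$. Exactly as in Theorem~\ref{compo}, I would then pass to the unique Fourier representation and verify that the resulting coefficients are independent of $r$, which identifies $F\circ\gamma\Phi$ as a bona fide free $k$-pluriharmonic function on the polyball; positivity is inherited from that of $F$ on the open ball.

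With this composition result in hand the argument closes as in Theorem~\ref{S-P}. Since ${\bf X},{\bf Y}\in{\bf B_n}(\cH)$, Theorem~\ref{foias2} shows that both are Harnack equivalent to $0$, hence to each other, so there is $c\geq 1$ with ${\bf X}\overset{H}{{\underset{c}\sim}}\,{\bf Y}$ and $\delta_H({\bf X},{\bf Y})$ is defined. Applying the Harnack domination to the positive free $k$-pluriharmonic function $F\circ\gamma\Phi$ gives
$$\frac{1}{c^2}(F\circ\gamma\Phi)(r{\bf Y})\leq(F\circ\gamma\Phi)(r{\bf X})\leq c^2(F\circ\gamma\Phi)(r{\bf Y})$$
for all $\gamma,r\in[0,1)$; letting $r\to 1$ and using the operator-norm continuity of $F\circ\gamma\Phi$ on ${\bf B_n}(\cH)$ yields $\frac{1}{c^2}F(\gamma\Phi({\bf Y}))\leq F(\gamma\Phi({\bf X}))\leq c^2 F(\gamma\Phi({\bf Y}))$. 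As this holds for every positive free pluriharmonic $F$ on $[B(\cH)^m]_1$ and every $\gamma\in[0,1)$, the definition of Harnack equivalence on $[B(\cH)^m]_1^-$ gives $\Phi({\bf X})\overset{H}{{\underset{c}\sim}}\,\Phi({\bf Y})$, so in particular $\Phi({\bf X})\overset{H}{\sim}\,\Phi({\bf Y})$ and $\omega_H(\Phi({\bf X}),\Phi({\bf Y}))\leq c$. Taking the infimum over all admissible $c$ gives $\omega_H(\Phi({\bf X}),\Phi({\bf Y}))\leq\omega_H({\bf X},{\bf Y})$, and passing to logarithms yields $\delta_H(\Phi({\bf X}),\Phi({\bf Y}))\leq\delta_H({\bf X},{\bf Y})$, as required. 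The main obstacle throughout is the first step: establishing the mixed-index composition lemma, since the change from $k$ to $1$ means one must re-run the multi-Toeplitz and Fourier-coefficient analysis rather than invoke Theorem~\ref{compo} directly.
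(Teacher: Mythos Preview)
Your proposal is correct, and the second half (applying Harnack domination to $F\circ\gamma\Phi$, taking $r\to 1$, then passing to the infimum) matches the paper exactly. The difference is in how you establish the composition step.

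You propose to re-run the multi-Toeplitz and Fourier-coefficient analysis of Theorem~\ref{compo}. The multi-Toeplitz part works exactly as you say, since $\Phi_\alpha(r{\bf S})$ lies in the polyball algebra and hence commutes with every ${\bf R}_{i,s}$. But your phrase ``exactly as in Theorem~\ref{compo}'' for the $r$-independence of the Fourier coefficients is a bit loose: that proof uses the factorization $\prod_i\langle G_{i,\beta_i}(r{\bf S}_i)^*x_i, G_{i,\alpha_i}(r{\bf S}_i)^*y_i\rangle$, which depends on the diagonal form $G_i=G_i(X_i)$. In your setting $\Phi$ mixes all the variables, so that product structure is gone; the argument can still be pushed through, but the details differ.

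The paper avoids this entirely by exploiting a structural fact about the \emph{single} ball $[B(\cH)^m]_1$: every free pluriharmonic function there is the real part of a free holomorphic one, i.e.\ $F=g^*+g$ with $g$ holomorphic. Then $F\circ\gamma\Phi=(g\circ\gamma\Phi)^*+(g\circ\gamma\Phi)$, and since the composition of free holomorphic functions is free holomorphic (Theorem~2.4 of \cite{Po-automorphisms-polyball}), one gets immediately that $F\circ\gamma\Phi$ is free $k$-pluriharmonic on ${\bf B_n}(\cH)$. This bypasses the multi-Toeplitz machinery altogether and is considerably shorter; it is the reason Proposition~\ref{SP1} does not require the diagonal hypothesis of Theorem~\ref{S-P}. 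Your route works but does more than necessary; the paper's route is the natural shortcut when the target has $k=1$.
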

\begin{proof}
Let $F:[B(\cH)^m]_1\to B(\cE)\otimes_{min} B(\cH)$ be a positive free pluriharmonic function. According to \cite{Po-pluriharmonic}, there are some operators $C_{(\alpha)}\in B(\cE)$, $\alpha\in \FF_m^+$, such that
$$
F(Y_1,\ldots, Y_m)=\sum_{q=1}^\infty \sum_{\alpha\in \FF_m^+, |\alpha|=q} C_{(\alpha)}^*\otimes Y_\alpha^*+\sum_{q=0}^\infty \sum_{\alpha\in \FF_m^+, |\alpha|=q} C_{(\alpha)}\otimes Y_\alpha, \qquad (Y_1,\ldots, Y_m)\in [B(\cH)^m]_1,
$$
where the convergence is in the operator norm topology.
 If $G=(G_1,\ldots, G_m):{\bf B}_{\bf n} (\cH)\to [B(\cH)^m]_1$ is a free holomorphic function  on the regular polyball, i.e. each $G_j:{\bf B_n}(\cH)\to B(\cH)$ is free holomorphic, then,  using Theorem 2.4 from \cite{Po-automorphisms-polyball}, we deduce that  $F\circ G$ is a  positive free $k$-pluriharmonic function  on ${\bf B}_{\bf n} (\cH)$.
 Applying this result when $G_j=r\Phi_j$, $r\in [0,1)$, and $j\in \{1,\ldots, m\}$, we have that $F\circ r\Phi$ is a positive free $k$-pluriharmonic function. Now, the rest of the proof is similar to that of Theorem  \ref{S-P}. We leave it to the reader.
\end{proof}

Let  ${\bf n}=(n_1,\ldots, n_k)\in \NN^k$
 and let $\sigma$ be  a permutation of the set  $\{1,\ldots, k\}$ such that $n_{\sigma(i)}=n_i$.  Then  the map $p_\sigma:{\bf B_n}(\cH)^-\to {\bf B_n}(\cH)^-$, defined by
 $$p_\sigma({\bf X} )=(X_{\sigma (1)},\ldots, X_{\sigma(k)}), \qquad {\bf X}:=(X_1,\ldots, X_k)\in {\bf B_n}(\cH)^-,$$
 is a homeomorphism of ${\bf B_n}(\cH)^-$ and $p_\sigma|_{{\bf B_n}(\cH)}$ a free holomorphic automorphism of  ${\bf B_n}(\cH)$.
 If each $U_i\in \CC^{n_i}$, $i\in \{1,\ldots, k\}$,  is a  unitary operator and    ${\bf U}\in B(C^{n_1+\cdots +n_k})$ is   the direct sum ${\bf U}=U_1\oplus \cdots \oplus U_k$, then  the map $\boldsymbol\Phi_{\bf U}:{\bf B_n}(\cH)^-\to
{\bf B_n}(\cH)^-$ defined by  $\boldsymbol\Phi_{\bf U}({\bf X}):={\bf X} {\bf U}$  is also a  free holomorphic  automorphism of ${\bf B_n}(\cH)$ and homeomorphism of ${\bf B_n}(\cH)^-$.

  In \cite{Po-automorphisms-polyball}, we obtained a complete description of the group $\text{\rm Aut}({\bf B_n})$ of all free holomorphic automorphisms of the regular polyball ${\bf B_n}$.  More precisely, we proved that if $\boldsymbol\Psi\in \text{\rm Aut}({\bf B_n})$ and  $\boldsymbol\lambda=(\lambda_1,\ldots, \lambda_k):=\boldsymbol\Psi^{-1}(0)$, then there are  unique unitary operators $U_i\in B(\CC^{n_i})$, $i\in \{1,\ldots, k\}$, and a unique permutation $\sigma\in \cS_k$ with $n_{\sigma(i)}=n_i$  such that
 $$
 \boldsymbol\Psi=p_\sigma\circ \boldsymbol\Phi_{\bf U}\circ \boldsymbol\Psi_{\boldsymbol\lambda},
 $$
 where ${\bf U}:=U_1\oplus\cdots \oplus U_k$,  $\boldsymbol\Psi_{\boldsymbol\lambda}:=(\Psi_{\lambda_1},\ldots, \Psi_{\lambda_k})$, and $\Psi_{\lambda_i}$ is the involutive free holomorphic automorphisms of the open unit ball $[B(\cH)^{n_i}]_1$ (see \cite{Po-automorphism}). Moreover, we showed that  if
  $\hat{\bf \Psi}=(\hat{\Psi}_1,\ldots, \hat{\Psi}_k)$ is
the boundary function with respect to the universal model ${\bf S}=\{{\bf S}_{i,j}\}$, i.e.  $\hat{\bf \Psi}:=\lim_{r\to 1}{\bf \Psi}(r{\bf S})$, then the following statements hold:
\begin{enumerate}
\item[(i)] ${\bf \Psi}$ is
 a free holomorphic function on the regular polyball $\gamma {\bf B_n}$ for some $\gamma>1$.
\item[(ii)]  $\hat{\bf \Psi}$   is a pure element in the polyball $ {\bf B_n}(\otimes_{i=1}^k F^2(H_{n_i}))^-$ and $\hat{\bf \Psi} ={\bf \Psi}({\bf S})$. Each $\hat{\Psi}_i=(\hat{\Psi}_{i,1},\ldots, \hat{\Psi}_{i,n_i})$  is an  isometry with entries in the noncommutative disk algebra generated by ${\bf S}_{i,1},\ldots, {\bf S}_{i,n_i}$ and the identity.

\item[(iii)]${\bf \Psi}$ is a homeomorphism of ${\bf B_n}(\cH)^-$ onto ${\bf B_n}(\cH)^-$.
    \end{enumerate}

In what follows we show that the hyperbolic metric is invariant under  the group $Aut({\bf B_n})$ of all free holomorphic automorphisms of ${\bf B_n}$.

\begin{theorem}\label{formula}
 Let
 ${\bf A}$ and ${\bf B}$ be  in
${\bf B_n}(\cH)^-$ such that ${\bf A}\overset{H}{\sim}\, {\bf B}$. Then
 $$
\delta_H({\bf A}, {\bf B})=\delta_H(\boldsymbol\Psi({\bf A}), \boldsymbol\Psi({\bf B})), \qquad \boldsymbol\Psi\in
Aut({\bf B_n}).
$$
\end{theorem}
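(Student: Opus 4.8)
The plan is to deduce the invariance from a single quantitative fact: that every automorphism preserves the relation $\overset{H}{{\underset{c}\sim}}$ together with its constant. First I would prove the forward implication
\begin{equation*}
{\bf A}\overset{H}{{\underset{c}\sim}}\, {\bf B}\ \Longrightarrow\ \boldsymbol\Psi({\bf A})\overset{H}{{\underset{c}\sim}}\, \boldsymbol\Psi({\bf B}),\qquad c\geq 1, \tag{$\star$}
\end{equation*}
for all $\boldsymbol\Psi\in Aut({\bf B_n})$. Once $(\star)$ is available for every automorphism, applying it to $\boldsymbol\Psi^{-1}\in Aut({\bf B_n})$ and to the points $\boldsymbol\Psi({\bf A}),\boldsymbol\Psi({\bf B})$ will give the reverse implication, so that ${\bf A}\overset{H}{{\underset{c}\sim}}\,{\bf B}$ holds precisely when $\boldsymbol\Psi({\bf A})\overset{H}{{\underset{c}\sim}}\,\boldsymbol\Psi({\bf B})$. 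In particular $\boldsymbol\Psi({\bf A})$ and $\boldsymbol\Psi({\bf B})$ will lie in a common Harnack part (so that $\delta_H$ is defined on them), and taking the infimum over the admissible constants will yield $\omega_H(\boldsymbol\Psi({\bf A}),\boldsymbol\Psi({\bf B}))=\omega_H({\bf A},{\bf B})$, whence $\delta_H(\boldsymbol\Psi({\bf A}),\boldsymbol\Psi({\bf B}))=\delta_H({\bf A},{\bf B})$.

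To prove $(\star)$ I would use the factorization $\boldsymbol\Psi=p_\sigma\circ\boldsymbol\Phi_{\bf U}\circ\boldsymbol\Psi_{\boldsymbol\lambda}$ recalled above. Since the forward implication is stable under composition of automorphisms, it is enough to establish it for each of the three factors. The factors $p_\sigma$ and $\boldsymbol\Phi_{\bf U}$ are the easy ones, because they commute with the dilation ${\bf X}\mapsto r{\bf X}$: indeed $p_\sigma(r{\bf X})=r\,p_\sigma({\bf X})$ and $\boldsymbol\Phi_{\bf U}(r{\bf X})=r{\bf X}{\bf U}=r\,\boldsymbol\Phi_{\bf U}({\bf X})$. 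Moreover, whenever $F$ is a positive free $k$-pluriharmonic function, so are $F\circ p_\sigma$ (obtained by relabelling the variables, which is licit because $n_{\sigma(i)}=n_i$) and $F\circ\boldsymbol\Phi_{\bf U}$ (by Theorem \ref{compo}, the $i$-th block of $\boldsymbol\Phi_{\bf U}$ being $X_i\mapsto X_iU_i$). Reading the defining inequalities of ${\bf A}\overset{H}{{\underset{c}\sim}}\,{\bf B}$ at the functions $F\circ p_\sigma$ and $F\circ\boldsymbol\Phi_{\bf U}$, and using that the dilation commutes with these maps, I would obtain at once the corresponding inequalities for the image points, with the same constant $c$ and with no limiting argument needed.

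The substantial case will be $\boldsymbol\Psi_{\boldsymbol\lambda}=(\Psi_{\lambda_1},\ldots,\Psi_{\lambda_k})$, and I expect this to be the main obstacle, precisely because the nonlinear involutive ball automorphisms $\Psi_{\lambda_i}$ do not commute with the dilation. I would circumvent this exactly as in the proof of Theorem \ref{S-P}, by carrying the scaling on the outside. Fixing a positive free $k$-pluriharmonic $F$ and $s\in[0,1)$, I would note that, $\boldsymbol\Psi_{\boldsymbol\lambda}$ being a homeomorphism of ${\bf B_n}(\cH)^-$, the map $s\,\boldsymbol\Psi_{\boldsymbol\lambda}$ carries ${\bf B_n}(\cH)^-$ into $s{\bf B_n}(\cH)^-\subseteq{\bf B_n}(\cH)$ with each block $s\Psi_{\lambda_i}:[B(\cH)^{n_i}]_1\to[B(\cH)^{n_i}]_1$ free holomorphic; hence, by Theorem \ref{compo}, $F\circ(s\,\boldsymbol\Psi_{\boldsymbol\lambda})$ is a positive free $k$-pluriharmonic function, continuous in the operator norm up to ${\bf B_n}(\cH)^-$. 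Applying ${\bf A}\overset{H}{{\underset{c}\sim}}\,{\bf B}$ to this function yields
\begin{equation*}
\frac{1}{c^2}F\left(s\,\boldsymbol\Psi_{\boldsymbol\lambda}(r{\bf B})\right)\leq F\left(s\,\boldsymbol\Psi_{\boldsymbol\lambda}(r{\bf A})\right)\leq c^2 F\left(s\,\boldsymbol\Psi_{\boldsymbol\lambda}(r{\bf B})\right),\qquad r\in[0,1).
\end{equation*}
Letting $r\to 1$ and invoking the norm-continuity just mentioned, these inequalities will persist with $r{\bf A},r{\bf B}$ replaced by ${\bf A},{\bf B}$; since $s\in[0,1)$ is arbitrary, this is exactly $\boldsymbol\Psi_{\boldsymbol\lambda}({\bf A})\overset{H}{{\underset{c}\sim}}\,\boldsymbol\Psi_{\boldsymbol\lambda}({\bf B})$, which establishes $(\star)$ for $\boldsymbol\Psi_{\boldsymbol\lambda}$. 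Combining the three factors proves $(\star)$ for every $\boldsymbol\Psi\in Aut({\bf B_n})$, and the reduction of the first paragraph then gives the theorem. The delicate point throughout is that, for the involutive factor, the equivalence of the images can only be recovered in the limit $r\to 1$, so the whole argument rests on Theorem \ref{compo} keeping $F\circ(s\,\boldsymbol\Psi_{\boldsymbol\lambda})$ positive free $k$-pluriharmonic, together with the operator-norm continuity of $\boldsymbol\Psi_{\boldsymbol\lambda}$ on the closed polyball.
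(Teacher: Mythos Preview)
Your argument is correct and follows essentially the same route as the paper's proof: show that ${\bf A}\overset{H}{{\underset{c}\prec}}\,{\bf B}$ is preserved by $\boldsymbol\Psi$ (with the same constant) by testing the defining inequalities against $F\circ(\gamma\boldsymbol\Psi)$, which is positive free $k$-pluriharmonic via Theorem~\ref{compo}, and then pass to the limit $r\to 1$ using norm continuity on the closed polyball; the reverse implication then comes from applying the same to $\boldsymbol\Psi^{-1}$. The only difference is presentational: the paper handles $\gamma\boldsymbol\Psi$ in one stroke (invoking ``Theorem~\ref{compo} and the remarks above''), whereas you make the factorization $p_\sigma\circ\boldsymbol\Phi_{\bf U}\circ\boldsymbol\Psi_{\boldsymbol\lambda}$ explicit and treat the three factors separately---this is in fact a little more careful, since Theorem~\ref{compo} is stated only for block-diagonal $G$, and the permutation must be absorbed into $F\mapsto F\circ p_\sigma$ before Theorem~\ref{compo} applies.
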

\begin{proof}
  Let
 ${\bf A}$ and ${\bf B}$ be  in
${\bf B_n}(\cH)^-$ and let $\boldsymbol\Psi\in Aut({\bf B_n})$. First, we prove that if $c\geq 1$, then
${\bf A}\overset{H}{{\underset{c}\prec}}\, {\bf B}$ if and only if
$\boldsymbol\Psi( {\bf A})\overset{H}{{\underset{c}\prec}}\,
\boldsymbol\Psi({\bf B})$. Assume that ${\bf A}\overset{H}{{\underset{c}\prec}}\, {\bf B}$
and let $F:{\bf B}_{\bf n}(\cH)\to B(\cE)\otimes_{min} B(\cH)$ be a positive free $k$-pluriharmonic function on ${\bf B}_{\bf n}(\cH)$.
 Due to Theorem \ref{compo} and the remarks above, the map
 $F\circ (\gamma\boldsymbol\Psi)$  is a positive  free $k$-pluriharmonic function on ${\bf B}_{\bf n}(\cH)$ for any $\gamma\in [0,1)$. If ${\bf A}\overset{H}{{\underset{c}\prec}}\, {\bf B}$, then
 $$
 (F\circ \gamma\boldsymbol\Psi)(r{\bf A})\leq c^2 (F\circ \gamma\boldsymbol\Psi)(r{\bf B})
 $$
 for any $\gamma,r\in [0,1)$. Taking $r\to 1$ in the inequality above and using the fact that  $\gamma  \boldsymbol\Psi(r{\bf A})\to \gamma \boldsymbol\Psi({\bf A})$ and
 $(F\circ \gamma\boldsymbol\Psi)(r{\bf A})\to (F\circ \gamma\boldsymbol\Psi)({\bf A})$ in the operator norm topology, we deduce that
$$
 F( \gamma\boldsymbol\Psi({\bf A}))\leq c^2 F (\gamma\boldsymbol\Psi({\bf B}))
 $$
for any $\gamma\in [0,1)$, which shows that
$\Psi( {\bf A})\overset{H}{{\underset{c}\prec}}\,
\Psi({\bf B})$. Conversely, if $\boldsymbol\Psi( {\bf A})\overset{H}{{\underset{c}\prec}}\,
\boldsymbol\Psi({\bf B})$, then  applying the direct implication  to $\boldsymbol\Psi^{-1}$, we obtain that $\boldsymbol\Psi^{-1}(\Psi( {\bf A}))\overset{H}{{\underset{c}\prec}}\,
\boldsymbol\Psi^{-1}(\boldsymbol\Psi({\bf B}))$. Since $\boldsymbol\Psi^{-1}\circ\boldsymbol\Psi=id $ on ${\bf B_n}(\cH)^-$, we deduce that    ${\bf A}\overset{H}{{\underset{c}\prec}}\, {\bf B}$.
Now, it is clear that ${\bf A}\,\overset{H}{{\underset{c}\sim}}\, {\bf B} $
if and only if $\boldsymbol\Psi({\bf A})\,\overset{H}{{\underset{c}\sim}}\, \boldsymbol\Psi({\bf B})$, which shows that $\delta_H({\bf A}, {\bf B})=\delta_H(\boldsymbol\Psi({\bf A}), \boldsymbol\Psi({\bf B}))$ for any $ \boldsymbol\Psi\in
Aut({\bf B_n})$.
The proof is complete.
\end{proof}

Fix  ${\bf n}=(n_1,\ldots, n_k)\in \NN^k$
 and let $\sigma$ be  a permutation of the set  $\{1,\ldots, k\}$ such that $n_{\sigma(i)}=n_i$.
Let $\cG$ be the set of all free holomorphic functions  of the form
 $p_\sigma\circ{\bf G}:{\bf B_n}(\cH)\to B(\cH)^{n_1+\cdots +n_k}$  where ${\bf G}$ has the form
$${\bf G}({\bf X}):=(G_1({X}_1),\ldots, G_k({ X}_k))\in {\bf B_n}(\cH),\qquad
{\bf X}:=(X_1,\ldots, X_k)\in {\bf B_n}(\cH),
$$
with
  $G_i:=(G_{i,1},\ldots, G_{i,n_i}):[B(\cH)^{n_i}]_1\to B(\cH)^{n_i}$ and each $G_{i,j}:[B(\cH)^{n_i}]_1\to B(\cH)$ is a free holomorphic function on $[B(\cH)^{n_i}]_1$.
  Note that  the group $\text{\rm Aut}({\bf B_n})$ of all free holomorphic automorphisms of the regular polyball ${\bf B_n}$ is included in $\cG$.

In what follows we define  a  Kobayashi type pseudo-distance on domains
$M\subset B(\cH)^{n_1+\cdots +n_k}$   with respect to the hyperbolic
metric $\delta_H$ of the regular polyball  ${\bf B_n}(\cH)$. Given two
points ${\bf X},{\bf Y}\in M$, we consider a {\it chain of free holomorphic
polyballs} from ${\bf X}$ to ${\bf Y}$. That is, a chain of elements
$$
{\bf X}={\bf X}_0,{\bf X}_1,\ldots, {\bf X}_m={\bf Y}
$$ in $M$, pairs  $({\bf A}^{(1)},
{\bf B}^{(1)}),\ldots, ({\bf A}^{(m)},{\bf B}^{(m)})$ of elements in  ${\bf B_n}(\cH)$, and free holomorphic
functions $F_1,\ldots, F_m$ in $\cG$ with values  in $M$
such that
$$
F_j({\bf A}^{(j)})={\bf X}^{(j-1)}\ \text{ and } \ F_j({\bf B}^{(j)})={\bf X}^{(j)}\ \text{ for } \
j=1,\ldots, m.
$$
Denote this  chain by $\gamma$ and define its length by
$$
\ell(\gamma):=\delta_H({\bf A}^{(1)},
{\bf B}^{(1)})+\cdots + \delta ({\bf A}^{(m)},{\bf B}^{(m)}),
$$
where $\delta_H$ is the hyperbolic metric  on ${\bf B_n}(\cH)$. We
define the Kobayashi type pseodo-distance
$$
\delta_{\bf B_n}^M(X,Y):=\inf \ell(\gamma),
$$
where the infimum is taken over all chains $\gamma$ of free
holomorphic polyballs from ${\bf X}$ to ${\bf Y}$. If there is no such chain, we set
$\delta_{\bf B_n}^M({\bf X},{\bf Y})=\infty$. In general,  $\delta_{\bf B_n}^M$ is not  a true  distance  on $M$.

\begin{proposition}\label{Koba} If $M={\bf B_n}(\cH)$, then
$\delta_{\bf B_n}^M$ is a true distance and \ $ \delta_{\bf B_n}^M=\delta_H. $
\end{proposition}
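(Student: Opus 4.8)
The plan is to prove the two inequalities $\delta_{\bf B_n}^M \leq \delta_H$ and $\delta_{\bf B_n}^M \geq \delta_H$ separately, conclude that the two agree on ${\bf B_n}(\cH)$, and then deduce that $\delta_{\bf B_n}^M$ is a genuine distance from the fact, established in Proposition \ref{delta}, that $\delta_H$ is a metric on the Harnack part of ${\bf B_n}(\cH)^-$ containing $0$, which by Theorem \ref{foias2} is exactly the open polyball ${\bf B_n}(\cH)$.

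For the upper bound I would exhibit the simplest admissible chain. The identity map on ${\bf B_n}(\cH)$ belongs to $\cG$: take $\sigma$ to be the identity permutation and each $G_i$ the identity of $[B(\cH)^{n_i}]_1$, so that $p_\sigma\circ{\bf G}=\mathrm{id}$, whose values obviously lie in $M={\bf B_n}(\cH)$. Hence the one-step chain ${\bf X}={\bf X}_0,\ {\bf X}_1={\bf Y}$ with $F_1=\mathrm{id}$ and $({\bf A}^{(1)},{\bf B}^{(1)})=({\bf X},{\bf Y})$ is admissible and has length $\delta_H({\bf X},{\bf Y})$; passing to the infimum gives $\delta_{\bf B_n}^M({\bf X},{\bf Y})\leq \delta_H({\bf X},{\bf Y})$.

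For the lower bound I would start from an arbitrary admissible chain $\gamma$ with links $F_j=p_{\sigma_j}\circ{\bf G}^{(j)}\in\cG$ taking values in ${\bf B_n}(\cH)$, together with points $({\bf A}^{(j)},{\bf B}^{(j)})\in{\bf B_n}(\cH)$ satisfying $F_j({\bf A}^{(j)})={\bf X}^{(j-1)}$ and $F_j({\bf B}^{(j)})={\bf X}^{(j)}$. The key step is to check that every link is $\delta_H$-nonexpansive. Since $F_j$ takes values in ${\bf B_n}(\cH)$ and the permutation map $p_{\sigma_j}$ preserves the polyball, the holomorphic part ${\bf G}^{(j)}$ itself maps into ${\bf B_n}(\cH)$, so that ${\bf G}^{(j)}({\bf A}^{(j)})$ and ${\bf G}^{(j)}({\bf B}^{(j)})$ lie in the Harnack part of $0$ (Theorem \ref{foias2}) and $\delta_H$ is defined on them. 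The Schwarz-Pick inequality of Theorem \ref{S-P} then gives $\delta_H({\bf G}^{(j)}({\bf A}^{(j)}),{\bf G}^{(j)}({\bf B}^{(j)}))\leq \delta_H({\bf A}^{(j)},{\bf B}^{(j)})$, and the invariance of $\delta_H$ under the automorphism $p_{\sigma_j}\in Aut({\bf B_n})$ (Theorem \ref{formula}) yields $\delta_H({\bf X}^{(j-1)},{\bf X}^{(j)})=\delta_H(F_j({\bf A}^{(j)}),F_j({\bf B}^{(j)}))\leq \delta_H({\bf A}^{(j)},{\bf B}^{(j)})$. Summing over $j$ and invoking the triangle inequality for the metric $\delta_H$ produces $\delta_H({\bf X},{\bf Y})\leq \sum_{j=1}^{m}\delta_H({\bf A}^{(j)},{\bf B}^{(j)})=\ell(\gamma)$, and taking the infimum over all chains gives $\delta_H({\bf X},{\bf Y})\leq \delta_{\bf B_n}^M({\bf X},{\bf Y})$.

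Combining the two inequalities yields $\delta_{\bf B_n}^M=\delta_H$ on ${\bf B_n}(\cH)$, and since $\delta_H$ is a metric, so is $\delta_{\bf B_n}^M$. The main obstacle I anticipate is the bookkeeping in the lower bound, namely verifying that each admissible link is distance-decreasing: this requires decomposing $F_j$ into its holomorphic part ${\bf G}^{(j)}$ and its permutation part $p_{\sigma_j}$, applying the Schwarz-Pick estimate to the former and the automorphism-invariance to the latter, and confirming throughout that all intermediate points lie in ${\bf B_n}(\cH)$ so that $\delta_H$ is meaningfully defined at each stage of the chain.
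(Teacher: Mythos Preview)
Your proof is correct and follows essentially the same strategy as the paper: the identity map furnishes the upper bound $\delta_{\bf B_n}^M\leq\delta_H$, and the triangle inequality together with the Schwarz--Pick contraction of each link gives the lower bound. The only difference is one of bookkeeping: the paper applies Theorem \ref{S-P} directly to each $F_j\in\cG$ without comment, whereas you explicitly split $F_j=p_{\sigma_j}\circ{\bf G}^{(j)}$ and invoke Theorem \ref{S-P} for ${\bf G}^{(j)}$ followed by Theorem \ref{formula} for the permutation $p_{\sigma_j}$; your version is in fact slightly more careful on this point, since Theorem \ref{S-P} as stated does not literally include the outer permutation.
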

\begin{proof} Fix  ${\bf X},{\bf Y}\in {\bf B_n}(\cH)$ and let
 $\gamma$ be a chain of free holomorphic
polyballs from ${\bf X}$ to ${\bf Y}$, as described above.
Since  $\delta_H$ is a metric, Theorem \ref{S-P}  implies
\begin{equation*}
\begin{split}
\delta_H({\bf X},{\bf Y})&\leq  \delta_H({\bf X}_0,{\bf X}_1)+ \cdots +
\delta_H({\bf X}_{m-1},{\bf X}_m)\\
&=
 \delta_H(F_1({\bf A}^{(1)}),
F_1({\bf B}^{(1)}) +\cdots +
 \delta_H(F_k({\bf A}^{(m)}),
F_k({\bf B}^{(m)})\\
&\leq \delta_H({\bf A}^{(1)},
{\bf B}^{(1)})+\cdots + \delta ({\bf A}^{(m)},{\bf B}^{(m)})=\ell(\gamma).
\end{split}
\end{equation*}
Taking the infimum over   all chains $\gamma$ of free holomorphic
polyballs from ${\bf X}$ to ${\bf Y}$, we deduce that $\delta_H({\bf X},{\bf Y})\leq \delta_{\bf B_n}^M({\bf X},{\bf Y})$. Taking $F$  the identity on ${\bf B_n}(\cH)$, we obtain
$\delta_H({\bf X},{\bf Y})= \delta_{\bf B_n}^M({\bf X},{\bf Y})$. The proof is complete.
\end{proof}

We remark that, in the particular case when $k=1$ and $n_1=1$, Proposition \ref{Koba} implies   the  well-known result that the Kobayashi distance on the open unit disc
$\DD$ coincides with the Poincar\' e  metric.    On the other hand, if $k=1$ and $n_1\in \NN$, we find again a result from \cite{Po-hyperbolic}.

\bigskip

\section{A metric   on the  Poisson parts of  the closed polyball}

In this section, we introduce the Poisson metric $\delta_\cP$ on Poisson parts of the closed polyball and  obtain an explicit  formula for $\delta_\cP$    in terms of certain noncommutative Cauchy kernels acting on tensor products of full Fock spaces.
We also prove that $\delta_\cP$ is a complete metric on ${\bf B_n}(\cH)$ and that the $\delta_\cP$-topology coincides with the operator norm topology on ${\bf B_n}(\cH)$.

Given ${\bf A}, {\bf B}\in {\bf B_n}(\cH)^-$ in the same Poisson part, i.e.
${\bf A}\,\overset{P}{\sim}\, {\bf B}$, we introduce
\begin{equation*}
 \omega_\cP({\bf A}, {\bf B}):=\inf\left\{ c > 1: \
{\bf A}\,\overset{P}{{\underset{c}\sim}}\, {\bf B}   \right\}.
\end{equation*}

\begin{lemma}\label{omega}
Let $\Delta$ be a Poisson part of ${\bf B_n}(\cH)^-$ and let ${\bf A}, {\bf B}, {\bf C}\in
\Delta$.
Then the following properties hold:
\begin{enumerate}
\item[(i)] $\omega_\cP({\bf A}, {\bf B})\geq 1$;
\item[(ii)] $\omega_\cP({\bf A}, {\bf B})=1$ if and only if ${\bf A}={\bf B}$;
\item[(iii)] $\omega_\cP({\bf A}, {\bf B})=\omega_\cP({\bf B}, {\bf A})$;
\item[(iv)] $\omega_\cP({\bf A}, {\bf C})\leq \omega_\cP({\bf A}, {\bf B}) \omega_\cP({\bf B}, {\bf C})$.
\end{enumerate}
\end{lemma}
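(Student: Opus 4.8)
The plan is to mirror the proof of Lemma \ref{beta}, with the family of all positive free $k$-pluriharmonic functions replaced by the single positive free $k$-pluriharmonic function ${\bf X}\mapsto \boldsymbol{\cP}({\bf R},{\bf X})$, since the relation $\overset{P}{\prec}$ is defined directly through the Poisson kernel. Properties (i) and (iii) are immediate from the definition of $\omega_\cP$ together with the symmetry of $\overset{P}{{\underset{c}\sim}}$: the two-sided estimate $\frac{1}{c^2}\boldsymbol{\cP}({\bf R},r{\bf B})\leq \boldsymbol{\cP}({\bf R},r{\bf A})\leq c^2\boldsymbol{\cP}({\bf R},r{\bf B})$ is unchanged when ${\bf A}$ and ${\bf B}$ are interchanged, which gives (iii), and the infimum defining $\omega_\cP$ runs over $c>1$, which gives (i).

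For (ii), the implication ${\bf A}={\bf B}\Rightarrow\omega_\cP({\bf A},{\bf B})=1$ is trivial, since then ${\bf A}\overset{P}{{\underset{c}\sim}}\,{\bf B}$ for every $c>1$. For the converse I would choose a sequence $c_n>1$ with $c_n\to 1$ and ${\bf A}\overset{P}{{\underset{c_n}\sim}}\,{\bf B}$, and let $n\to\infty$ in the defining inequalities to conclude that $\boldsymbol{\cP}({\bf R},r{\bf A})=\boldsymbol{\cP}({\bf R},r{\bf B})$ for every $r\in[0,1)$. The substantive step is then to recover ${\bf A}={\bf B}$ from the equality of the two Poisson kernels, and this uses verbatim the coefficient extraction carried out in the proof of Lemma \ref{beta}: taking $x:=\underbrace{1\otimes\cdots\otimes 1}_{i-1}\otimes\,e_j^i\otimes 1\otimes\cdots\otimes 1$ and $y:=1\otimes\cdots\otimes 1$, one evaluates the coefficients $\left<{\bf R}_{1,\widetilde\alpha_1}^*\cdots {\bf R}_{k,\widetilde\alpha_k}^*{\bf R}_{1,\widetilde\beta_1}\cdots {\bf R}_{k,\widetilde\beta_k}x,y\right>$ and finds that $\left<\boldsymbol{\cP}({\bf R},{\bf X})(x\otimes h),y\otimes\ell\right>=\left<X_{i,j}h,\ell\right>$ for every ${\bf X}\in{\bf B_n}(\cH)$ and all $h,\ell\in\cH$. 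Applying this to ${\bf X}=r{\bf A}$ and ${\bf X}=r{\bf B}$ yields $A_{i,j}=B_{i,j}$ for all admissible $i,j$, that is, ${\bf A}={\bf B}$.

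For (iv) I would take arbitrary $c_1>\omega_\cP({\bf A},{\bf B})$ and $c_2>\omega_\cP({\bf B},{\bf C})$ realizing ${\bf A}\overset{P}{{\underset{c_1}\sim}}\,{\bf B}$ and ${\bf B}\overset{P}{{\underset{c_2}\sim}}\,{\bf C}$, and chain the two two-sided operator inequalities (valid uniformly in $r\in[0,1)$) to obtain $\frac{1}{c_1^2 c_2^2}\boldsymbol{\cP}({\bf R},r{\bf C})\leq \boldsymbol{\cP}({\bf R},r{\bf A})\leq c_1^2 c_2^2\,\boldsymbol{\cP}({\bf R},r{\bf C})$ for every $r$, whence ${\bf A}\overset{P}{{\underset{c_1 c_2}\sim}}\,{\bf C}$ and $\omega_\cP({\bf A},{\bf C})\leq c_1 c_2$; passing to the infimum over such $c_1,c_2$ gives the submultiplicative inequality. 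There is no serious obstacle here: the genuinely new ingredient is the kernel-to-coefficient identity in (ii), which is already available from Lemma \ref{beta}, and the remainder is bookkeeping of operator inequalities holding simultaneously in $r$.
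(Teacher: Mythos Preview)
Your proposal is correct and is precisely the approach the paper takes: the paper's proof of Lemma \ref{omega} consists of the single sentence ``The proof is similar to Lemma \ref{beta},'' and you have faithfully carried out that similarity, including the coefficient-extraction argument for (ii). Your handling of (iv) via constants $c_1>\omega_\cP({\bf A},{\bf B})$, $c_2>\omega_\cP({\bf B},{\bf C})$ and a final infimum is a slightly more careful variant of the paper's argument in Lemma \ref{beta}, which uses $\omega_H$ directly in the inequalities; both are valid since the operator inequalities are closed under limits in $c$.
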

\begin{proof}
The proof is similar to Lemma \ref{beta}.
\end{proof}

Now, we can introduce  a
metric
  on the Poisson parts of ${\bf B_n}(\cH)^-$.

\begin{proposition}\label{delta2}
Let $\Delta$ be a Poisson part of ${\bf B_n}(\cH)^-$ and define the function
$\delta_\cP:\Delta\times \Delta \to \RR^+$ by setting
\begin{equation*}
 \delta_\cP({\bf A},{\bf B}):=\ln \omega_\cP({\bf A}, {\bf B}),\quad {\bf A}, {\bf B}\in \Delta.
\end{equation*}
Then $\delta_\cP$ is a metric on  $\Delta$.
\end{proposition}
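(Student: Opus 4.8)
The plan is to deduce all four metric axioms directly from Lemma \ref{omega} by passing to the logarithm, exactly as Proposition \ref{delta} was obtained from Lemma \ref{beta}. Since $\delta_\cP({\bf A},{\bf B})=\ln \omega_\cP({\bf A},{\bf B})$ and $\ln$ is a strictly increasing bijection from $[1,\infty)$ onto $[0,\infty)$ carrying products to sums, each multiplicative property of $\omega_\cP$ translates into the corresponding additive metric axiom.

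First I would establish nonnegativity together with the identity of indiscernibles: Lemma \ref{omega}(i) gives $\omega_\cP({\bf A},{\bf B})\geq 1$, whence $\delta_\cP({\bf A},{\bf B})\geq 0$, and Lemma \ref{omega}(ii) gives that $\delta_\cP({\bf A},{\bf B})=0$ holds exactly when $\omega_\cP({\bf A},{\bf B})=1$, that is, exactly when ${\bf A}={\bf B}$. Symmetry I would read off immediately from Lemma \ref{omega}(iii), since $\omega_\cP({\bf A},{\bf B})=\omega_\cP({\bf B},{\bf A})$ yields $\delta_\cP({\bf A},{\bf B})=\delta_\cP({\bf B},{\bf A})$. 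For the triangle inequality I would take logarithms in the submultiplicativity relation of Lemma \ref{omega}(iv), using the monotonicity and additivity of $\ln$, to obtain $\delta_\cP({\bf A},{\bf C})\leq \delta_\cP({\bf A},{\bf B})+\delta_\cP({\bf B},{\bf C})$.

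Since these four properties are precisely the metric axioms on $\Delta$, this completes the argument. I do not expect any genuine obstacle at this stage: all the content has already been absorbed into Lemma \ref{omega}, whose proof mirrors that of Lemma \ref{beta}---relying on the fact that ${\bf X}\mapsto \boldsymbol{\cP}({\bf R},{\bf X})$ is a positive free $k$-pluriharmonic function (so the defining Poisson inequalities apply) and on the explicit recovery of the entries $X_{i,j}$ from $\boldsymbol{\cP}({\bf R},{\bf X})$ that forces ${\bf A}={\bf B}$ in part (ii). The only point worth double-checking is that the Poisson part $\Delta$ is indeed the domain on which $\omega_\cP$ is finite, so that $\delta_\cP$ is everywhere well defined; this is guaranteed by restricting to a single equivalence class of $\overset{P}{\sim}$.
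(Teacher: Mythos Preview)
Your proposal is correct and is exactly the paper's approach: the proof in the paper simply says ``The result follows from Lemma \ref{omega},'' and you have spelled out precisely how each metric axiom drops out by taking logarithms in parts (i)--(iv) of that lemma.
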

\begin{proof} The result follows from  Lemma \ref{omega}.
\end{proof}

\begin{theorem}\label{dp} If ${\bf A}$ and ${\bf B}$ are   in the open ball
${\bf B_n}(\cH)$, then
$$
\delta_{\cP}({\bf A}, {\bf B})=\ln \max \left\{\left\|C_{\bf A}({\bf R}) C_{\bf B}({\bf R})^{-1}\right\|, \left\|C_{\bf B}({\bf R})C_{\bf A}({\bf R})^{-1}\right\|\right\},
$$
where
 $$C_{\bf X}({\bf R}):=(I\otimes \boldsymbol\Delta_{\bf X}(I)^{1/2})\prod_{i=1}^k(I-{\bf R}_{i,1}\otimes
X_{i,1}^*-\cdots -{\bf R}_{i,n_i}\otimes X_{i,n_i}^*)^{-1}$$ for any
${\bf X}=(X_1,\ldots, X_k)\in {\bf B_n}(\cH)$  with $X_i=(X_{i,1},\ldots, X_{i, n_i})$.
\end{theorem}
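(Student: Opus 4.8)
The plan is to realize the free pluriharmonic Poisson kernel as an operatorial square $C_{\bf X}({\bf R})^*C_{\bf X}({\bf R})$ and then read the Poisson domination inequalities directly off the norms of $C_{\bf A}({\bf R})C_{\bf B}({\bf R})^{-1}$.

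\emph{Factorization of the Poisson kernel.} Put $\Lambda_i:={\bf R}_{i,1}^*\otimes X_{i,1}+\cdots+{\bf R}_{i,n_i}^*\otimes X_{i,n_i}$. The computation carried out in the proof of Lemma \ref{les} gives, for ${\bf X}\in{\bf B_n}(\cH)$,
$$\boldsymbol{\cP}({\bf R},{\bf X})=\prod_{i=1}^k(I-\Lambda_i)^{-1}\left(I\otimes \boldsymbol\Delta_{\bf X}(I)\right)\prod_{i=1}^k(I-\Lambda_i^*)^{-1}.$$
Since the ${\bf R}_{i,j}$ act on distinct tensor factors and the blocks $X_i,X_{i'}$ commute entrywise (definition of the polyball), the operators $\Lambda_1,\dots,\Lambda_k$ mutually commute; hence $\left[\prod_{i=1}^k(I-\Lambda_i^*)^{-1}\right]^*=\prod_{i=1}^k(I-\Lambda_i)^{-1}$. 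Because $\sum_j {\bf R}_{i,j}\otimes X_{i,j}^*=\Lambda_i^*$, the Cauchy kernel is $C_{\bf X}({\bf R})=\left(I\otimes\boldsymbol\Delta_{\bf X}(I)^{1/2}\right)\prod_{i=1}^k(I-\Lambda_i^*)^{-1}$, and therefore
$$\boldsymbol{\cP}({\bf R},{\bf X})=C_{\bf X}({\bf R})^*C_{\bf X}({\bf R}),\qquad {\bf X}\in{\bf B_n}(\cH).$$
As ${\bf X}$ lies in the open ball, $\boldsymbol\Delta_{\bf X}(I)>0$ and each $r(X_i)<1$, so each $\Lambda_i$ has spectral radius $<1$ and $C_{\bf X}({\bf R})$ is invertible; the same identity with ${\bf X}$ replaced by $r{\bf X}$ yields $\boldsymbol{\cP}({\bf R},r{\bf X})=C_{r{\bf X}}({\bf R})^*C_{r{\bf X}}({\bf R})$.

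\emph{From domination to norms.} I would use the elementary fact that, for operators $T,S$ with $S$ invertible, $T^*T\le c^2 S^*S$ if and only if $\|TS^{-1}\|\le c$. Taking $T=C_{r{\bf A}}({\bf R})$, $S=C_{r{\bf B}}({\bf R})$ and using the factorization, $\boldsymbol{\cP}({\bf R},r{\bf A})\le c^2\boldsymbol{\cP}({\bf R},r{\bf B})$ is equivalent to $\|C_{r{\bf A}}({\bf R})C_{r{\bf B}}({\bf R})^{-1}\|\le c$, and symmetrically for the reverse inequality. Hence $ {\bf A}\overset{P}{{\underset{c}\sim}}{\bf B}$ holds precisely when both norms are $\le c$ for every $r$, so that
$$\omega_\cP({\bf A},{\bf B})=\sup_{r\in[0,1)}\max\left\{\|C_{r{\bf A}}({\bf R})C_{r{\bf B}}({\bf R})^{-1}\|,\ \|C_{r{\bf B}}({\bf R})C_{r{\bf A}}({\bf R})^{-1}\|\right\}.$$

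\emph{Reducing the supremum to $r=1$ — the main obstacle.} One inequality is soft: since ${\bf A},{\bf B}$ lie in the open ball, $(I-r\Lambda_i)^{-1}\to(I-\Lambda_i)^{-1}$ and $\boldsymbol\Delta_{r{\bf X}}(I)\to\boldsymbol\Delta_{\bf X}(I)$ in norm as $r\to1$, so $C_{r{\bf A}}({\bf R})C_{r{\bf B}}({\bf R})^{-1}\to C_{\bf A}({\bf R})C_{\bf B}({\bf R})^{-1}$ and the supremum is at least the $r=1$ quantity. For the reverse I would invoke the radial completely positive map of the Berezin calculus: multiplication of each coefficient ${\bf R}^*_{\tilde{\boldsymbol\alpha}}{\bf R}_{\tilde{\boldsymbol\beta}}$ by $r^{|\boldsymbol\alpha|+|\boldsymbol\beta|}$ defines a unital completely positive map $\theta_r$ on the $k$-multi-Toeplitz operator system for which $(\theta_r\otimes\mathrm{id})[\boldsymbol{\cP}({\bf R},{\bf X})]=\boldsymbol{\cP}({\bf R},r{\bf X})$ for all ${\bf X}\in{\bf B_n}(\cH)$. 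Applying this positive map to the boundary inequality $\boldsymbol{\cP}({\bf R},{\bf A})\le c^2\boldsymbol{\cP}({\bf R},{\bf B})$ (equivalent, by the step above at $r=1$, to $\|C_{\bf A}({\bf R})C_{\bf B}({\bf R})^{-1}\|\le c$) recovers $\boldsymbol{\cP}({\bf R},r{\bf A})\le c^2\boldsymbol{\cP}({\bf R},r{\bf B})$ for every $r\in[0,1)$; thus $ {\bf A}\overset{P}{{\underset{c}\sim}}{\bf B}$ holds as soon as $c\ge\max\{\|C_{\bf A}({\bf R})C_{\bf B}({\bf R})^{-1}\|,\|C_{\bf B}({\bf R})C_{\bf A}({\bf R})^{-1}\|\}$. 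Together with the soft bound this gives
$$\omega_\cP({\bf A},{\bf B})=\max\left\{\|C_{\bf A}({\bf R})C_{\bf B}({\bf R})^{-1}\|,\ \|C_{\bf B}({\bf R})C_{\bf A}({\bf R})^{-1}\|\right\},$$
and taking logarithms via $\delta_\cP=\ln\omega_\cP$ (Proposition \ref{delta2}) yields the asserted formula. The delicate point throughout is the honest verification that the substitution ${\bf X}\mapsto r{\bf X}$ is implemented by a completely positive map $\theta_r$ on the ${\bf R}$-multi-Toeplitz system; everything else is the operator inequality of the second step and the norm continuity of the resolvents on the open ball.
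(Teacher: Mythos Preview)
Your proof is correct and follows essentially the same route as the paper: factor the Poisson kernel as $C_{\bf X}({\bf R})^*C_{\bf X}({\bf R})$, translate the two-sided Poisson domination into the norm bounds on $C_{\bf A}({\bf R})C_{\bf B}({\bf R})^{-1}$ and $C_{\bf B}({\bf R})C_{\bf A}({\bf R})^{-1}$, and pass between the boundary inequality and the radial family via a unital completely positive radialization. The map you call $\theta_r$ is exactly the noncommutative Berezin transform $\boldsymbol\cB_{r{\bf R}}$ (acting on $\text{span}\{\boldsymbol\cR_{\bf n}^*\boldsymbol\cR_{\bf n}\}^{-\|\cdot\|}$, tensored with $\mathrm{id}$), which is what the paper invokes for the same step; so your ``delicate point'' is already handled by the established u.c.p.\ property of the Berezin transform rather than requiring separate verification.
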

\begin{proof}
Due to Theorem \ref{foias2}, the open polyball ${\bf B_n}(\cH)$ is the Poisson part of ${\bf B_n}(\cH)^-$ containing the zero element. Let ${\bf A},{\bf B} \in{\bf B_n}(\cH)$ and assume that
${\bf A}\,\overset{P}{{\underset{c}\sim}}\, {\bf B} $ for some $c\geq 1$. Then
\begin{equation}
\label{CB}
\frac{1}{c^2}\boldsymbol{\cP}({\bf R}, r{\bf B})\leq  \boldsymbol{\cP}({\bf R}, r{\bf A})\leq c^2 \boldsymbol{\cP}({\bf R}, r{\bf B})
\end{equation}
 for any $r\in [0,1)$. According to Theorem 4.2 from \cite{Po-pluriharmonic-polyball}, we have
 $$
 \boldsymbol{\cP}({\bf R}, {\bf X})=C_{\bf X}({\bf R})^*C_{\bf X}({\bf R}),\qquad {\bf X}\in {\bf B_n}(\cH),
 $$
where $C_{\bf X}({\bf R})$ is given in the theorem.
Since the  ${\bf X}\mapsto \boldsymbol{\cP}({\bf R}, {\bf X})$ is continuous on ${\bf B_n}(\cH)$ in the operator norm topology, and taking $r\to 1$ in relation  \eqref{CB}, we obtain that
$$
\frac{1}{c^2}C_{\bf B}({\bf R})^*C_{\bf B}({\bf R})\leq  C_{\bf A}({\bf R})^*C_{\bf A}({\bf R})\leq c^2 C_{\bf B}({\bf R})^*C_{\bf B}({\bf R})
$$
which, due to the fact that $C_{\bf A}({\bf R})$ and $C_{\bf B}({\bf R})$ are invertible operators, implies
$$
(C_{\bf A}({\bf R})^{-1})^*C_{\bf B}({\bf R})^*C_{\bf B}({\bf R})C_{\bf A}({\bf R})^{-1}\leq c^2I
$$
and
$$
(C_{\bf B}({\bf R})^{-1})^*C_{\bf A}({\bf R})^*C_{\bf A}({\bf R})C_{\bf B}({\bf R})^{-1}\leq c^2I.
$$
Consequently,
$$
t:=\max \left\{\left\|C_{\bf A}({\bf R})C_{\bf B}({\bf R})^{-1}\right\|, \left\|C_{\bf B}({\bf R})C_{\bf A}({\bf R})^{-1}\right\|\right\}\leq c
$$
which implies  $\ln t\leq \delta_\cP({\bf A}, {\bf B})$.
To prove the reverse inequality, note that
$$
\left\|C_{\bf A}({\bf R})C_{\bf B}({\bf R})^{-1}\right\|\leq t \quad \text{and} \quad \left\|C_{\bf B}({\bf R})C_{\bf A}({\bf R})^{-1}\right\|\leq t.
$$
These inequalities imply
 $$
\frac{1}{t^2}C_{\bf B}({\bf R})^*C_{\bf B}({\bf R})\leq  C_{\bf A}({\bf R})^*C_{\bf A}({\bf R})\leq t^2 C_{\bf B}({\bf R})^*C_{\bf B}({\bf R})
$$
which is equivalent to
$$
\frac{1}{t^2}\boldsymbol{\cP}({\bf R}, {\bf B})\leq  \boldsymbol{\cP}({\bf R}, {\bf A})\leq t^2 \boldsymbol{\cP}({\bf R}, {\bf B})
$$
and shows that $t\geq 1$. Hence,
$
\frac{1}{t^2}\boldsymbol{\cP}({\bf S}, {\bf B})\leq  \boldsymbol{\cP}({\bf R}, {\bf A})\leq t^2 \boldsymbol{\cP}({\bf S}, {\bf B}).
$
Applying the Berezin transform  at $r{\bf R}$, $r\in [0,1)$, and using the fact that ${\boldsymbol\cB_{r\bf R}}\otimes id$ is a unital completely positive map, we deduce that ${\bf A}\,\overset{P}{{\underset{t}\sim}}\, {\bf B}$.
Consequently,  $ \delta_\cP({\bf A}, {\bf B})\leq \ln t$, which completes the proof of the theorem.
\end{proof}

We remark that if ${\bf A}$ and ${\bf B}$ are   in
${\bf B}_{\bf n}(\cH)^-$, then ${\bf A}\,\overset{P}{\sim}\, {\bf B}$ if and only if $r{\bf A}\,\overset{P}{\sim}\, r{\bf B}$ for any $r\in [0,1)$ and
$\sup_{r\in [0,1)} \omega_\cP(r{\bf A}, r{\bf B})<\infty$. In this case, Theorem \ref{dp} implies
$$
\delta_{\cP}({\bf A}, {\bf B})=\ln \max \left\{\sup_{r\in [0,1)}\left\|C_{\bf A}({\bf R}) C_{\bf B}({\bf R})^{-1}\right\|, \sup_{r\in [0,1)}\left\|C_{\bf B}({\bf R})C_{\bf A}({\bf R})^{-1}\right\|\right\}.
$$

According to Lemma \ref{les}, if ${\bf X}=(X_1, \ldots, X_k)\in {\bf B}_{\bf n}(\cH)^-$,
 then
${\bf X}\overset{P}{\prec}\, {0}$ if and only if  the joint  spectral radius $r(X_i)<1$ for any $i\in \{1,\ldots, k\}$.
Set
$$
{\bf B}_{\bf n}(\cH)^-_0:=\left\{ {\bf X}\in {\bf B}_{\bf n}(\cH)^-:\ {\bf X}\overset{P}{\prec}\, {0}\right\}
$$
and note that Theorem \ref{foias2} implies that ${\bf B}_{\bf n}(\cH)\subset {\bf B}_{\bf n}(\cH)^-_0$. We define the map
$d_\cP$ on ${\bf B}_{\bf n}(\cH)^-_0\times {\bf B}_{\bf n}(\cH)^-_0$  by setting
$$
d_\cP({\bf A}, {\bf B}):=\sup_{r\in [0,1)}\left\|\boldsymbol{\cP}({\bf R}, r{\bf A})-\boldsymbol{\cP}({\bf R}, r{\bf B})\right\|, \qquad {\bf A}, {\bf B}\in {\bf B}_{\bf n}(\cH)^-_0.
$$
Note that if ${\bf X}\in {\bf B}_{\bf n}(\cH)^-_0$, then ${\bf X}\overset{P}{\prec}\, {0}$, which shows that there is $c\geq 1$ such that
$\boldsymbol{\cP}({\bf R}, r{\bf A})\leq c^2 I$ for any $r\in [0,1)$. Consequently,
$d_\cP({\bf A}, {\bf B})<\infty$ for any $ {\bf A}, {\bf B}\in {\bf B}_{\bf n}(\cH)^-_0$. If $d_\cP({\bf A}, {\bf B})=0$, then $d_\cP({\bf A}, {\bf B})=0$ and,
as in the proof of Lemma \ref{beta}, we deduce that ${\bf A}={\bf B}$.
Now, it is clear that $d_\cP$ is a metric on ${\bf B}_{\bf n}(\cH)^-_0$.

\begin{theorem}\label{prop-dp}
The map $d_\cP$ has the folowing properties:
\begin{enumerate}
\item[(i)] $d_\cP$ is a complete metric on $  {\bf B}_{\bf n}(\cH)^-_0$;
\item[(ii)]
the $d_\cP$-topology is stronger than the norm topology on $ {\bf B}_{\bf n}(\cH)^-_0$;
\item[(iii)]
the  $d_\cP$-topology coincides with the norm topology on ${\bf B}_{\bf n}(\cH)$.

\end{enumerate}

\end{theorem}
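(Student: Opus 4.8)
The plan is to prove (ii) first, since it underlies the completeness argument of (i), and then to establish (iii) by a resolvent estimate that is uniform in $r$. Throughout I would use the two available descriptions of the Poisson kernel: its series definition, and the closed form extracted in the proof of Lemma \ref{les},
$$\boldsymbol{\cP}({\bf R}, r{\bf X}) = \prod_{i=1}^k \bigl(I - r\Lambda_i({\bf X})\bigr)^{-1}\,\bigl(I\otimes \boldsymbol\Delta_{r{\bf X}}(I)\bigr)\, \prod_{i=1}^k \bigl(I - r\Lambda_i({\bf X})^*\bigr)^{-1}, \qquad \Lambda_i({\bf X}) := \sum_{j=1}^{n_i} {\bf R}_{i,j}^*\otimes X_{i,j}.$$
For (ii) I would recover the entries of ${\bf X}$ from the kernel by compression, exactly as in the proof of Lemma \ref{beta}. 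Taking $x := 1\otimes\cdots\otimes e_j^i\otimes\cdots\otimes 1$ and $y := 1\otimes\cdots\otimes 1$, the only surviving monomial of the series is the linear term ${\bf R}_{i,j}^*\otimes rX_{i,j}$, so $\left<\boldsymbol{\cP}({\bf R}, r{\bf X})(x\otimes h), y\otimes \ell\right> = r\left<X_{i,j}h,\ell\right>$. Applying this to ${\bf A}$ and ${\bf B}$ and subtracting gives $r\,\|A_{i,j}-B_{i,j}\| \le \|\boldsymbol{\cP}({\bf R}, r{\bf A}) - \boldsymbol{\cP}({\bf R}, r{\bf B})\| \le d_\cP({\bf A},{\bf B})$ for every $r\in[0,1)$; letting $r\to 1$ yields $\|A_{i,j}-B_{i,j}\| \le d_\cP({\bf A},{\bf B})$ for all $i,j$. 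Hence $d_\cP$-convergence forces entrywise norm convergence, so the $d_\cP$-topology is finer than the norm topology.

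For (i), let $\{{\bf A}^{(m)}\}$ be $d_\cP$-Cauchy. By (ii) it is norm-Cauchy, so its entries converge and produce a limit tuple ${\bf A}\in {\bf B}_{\bf n}(\cH)^-$. Being $d_\cP$-Cauchy it is $d_\cP$-bounded; comparing with the fixed element ${\bf A}^{(1)}$, which satisfies $\boldsymbol{\cP}({\bf R}, r{\bf A}^{(1)})\le c_1^2 I$ since ${\bf A}^{(1)}\overset{P}{\prec}0$, gives a single constant $C$ with $\boldsymbol{\cP}({\bf R}, r{\bf A}^{(m)})\le C^2 I$ for all $m$ and all $r$. For each fixed $r<1$ the map ${\bf X}\mapsto \boldsymbol{\cP}({\bf R}, r{\bf X})$ is norm-continuous on the closed polyball (the resolvents converge by Neumann series, as $\|r\Lambda_i({\bf X})\|\le r<1$), so $\boldsymbol{\cP}({\bf R}, r{\bf A}^{(m)})\to \boldsymbol{\cP}({\bf R}, r{\bf A})$ and the bound passes to the limit, giving ${\bf A}\overset{P}{\prec}0$; by Lemma \ref{les} this places ${\bf A}\in {\bf B}_{\bf n}(\cH)^-_0$. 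Finally, to get $d_\cP({\bf A}^{(m)},{\bf A})\to 0$, fix $\epsilon>0$, choose $N$ with $\sup_r\|\boldsymbol{\cP}({\bf R}, r{\bf A}^{(m)}) - \boldsymbol{\cP}({\bf R}, r{\bf A}^{(m')})\|<\epsilon$ for $m,m'\ge N$, fix $m\ge N$ and $r$, and let $m'\to\infty$ (pointwise-in-$r$ continuity) to obtain $\|\boldsymbol{\cP}({\bf R}, r{\bf A}^{(m)}) - \boldsymbol{\cP}({\bf R}, r{\bf A})\|\le \epsilon$; taking $\sup_r$ yields $d_\cP({\bf A}^{(m)},{\bf A})\le\epsilon$.

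For (iii), by (ii) it remains to show that norm-convergence implies $d_\cP$-convergence on the \emph{open} polyball, i.e.\ uniform-in-$r$ convergence of the kernels. Here the hypothesis ${\bf A}\in {\bf B}_{\bf n}(\cH)$ is essential: by Theorem \ref{foias2} it gives $r(A_i)<1$, hence $r(\Lambda_i({\bf A}))<1$ and a finite bound $M_i := \sup_{r\in[0,1]}\|(I-r\Lambda_i({\bf A}))^{-1}\|$ including the endpoint $r=1$. If $\|{\bf A}^{(m)}-{\bf A}\|\to 0$, then $\|\Lambda_i({\bf A}^{(m)})-\Lambda_i({\bf A})\|\to 0$, and perturbing the factorization $I - r\Lambda_i({\bf A}^{(m)}) = (I-r\Lambda_i({\bf A}))\bigl(I - (I-r\Lambda_i({\bf A}))^{-1} r(\Lambda_i({\bf A}^{(m)})-\Lambda_i({\bf A}))\bigr)$ gives, for large $m$, a uniform bound $\|(I-r\Lambda_i({\bf A}^{(m)}))^{-1}\|\le 2M_i$ and a uniform estimate $\|(I-r\Lambda_i({\bf A}^{(m)}))^{-1} - (I-r\Lambda_i({\bf A}))^{-1}\|\le 2M_i^2\|\Lambda_i({\bf A}^{(m)})-\Lambda_i({\bf A})\|$ for all $r\in[0,1)$. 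Combined with the uniform convergence of $\boldsymbol\Delta_{r{\bf A}^{(m)}}(I)\to\boldsymbol\Delta_{r{\bf A}}(I)$ (a noncommutative polynomial in the entries with $r$ in the compact set $[0,1]$), the closed form for $\boldsymbol{\cP}$ yields $\sup_r\|\boldsymbol{\cP}({\bf R}, r{\bf A}^{(m)}) - \boldsymbol{\cP}({\bf R}, r{\bf A})\|\to 0$, that is $d_\cP({\bf A}^{(m)},{\bf A})\to 0$.

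The main obstacle is precisely this uniformity in $r\in[0,1)$ for (iii): pointwise-in-$r$ continuity of the Poisson kernel is routine, but controlling $\sup_r$ requires the resolvent bound $\sup_{r\in[0,1]}\|(I-r\Lambda_i({\bf A}))^{-1}\|<\infty$ up to the endpoint $r=1$, which holds only because the joint spectral radius $r(A_i)$ is strictly less than $1$ on the open polyball. This is exactly the property that fails at boundary points of ${\bf B}_{\bf n}(\cH)^-_0$, which explains why the two topologies are shown to coincide only on ${\bf B}_{\bf n}(\cH)$ while on all of ${\bf B}_{\bf n}(\cH)^-_0$ one obtains merely that $d_\cP$ is stronger.
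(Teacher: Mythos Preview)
Your argument is correct, and in several places it is more direct than the paper's own proof.

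For (ii), the paper reduces to the single-ball Poisson kernel $P(R_q,rA_q)$ via the embedding \eqref{PP} and then recovers $r\Lambda_{A_q}$ from the Fourier integral $r\Lambda_{A_q}=\frac{1}{2\pi}\int_0^{2\pi}e^{-it}P(re^{it}R_q,A_q)\,dt$, obtaining the row estimate $\|A_q-B_q\|\le d_\cP({\bf A},{\bf B})$. Your ``matrix-coefficient'' extraction, reusing the computation from Lemma~\ref{beta}, gives the entrywise bound $\|A_{i,j}-B_{i,j}\|\le d_\cP({\bf A},{\bf B})$ with no integral needed; either bound suffices for (ii).

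For (i), the paper first establishes the monotonicity $\|G_{r_1}\|\le\|G_{r_2}\|$ for $r_1<r_2$ (via the Berezin transform $\boldsymbol\cB_{(r_1/r_2){\bf R}}\otimes id$), so that $d_\cP({\bf A}^{(m)},{\bf T})$ equals the \emph{limit} $\lim_{r\to 1}\|G_r\|$, and then identifies this limit as $\|C_{{\bf A}^{(m)}}({\bf R})^*C_{{\bf A}^{(m)}}({\bf R})-C_{\bf T}({\bf R})^*C_{\bf T}({\bf R})\|$, invoking upper semicontinuity of the spectral radius to justify taking $r\to 1$ in the closed form. You instead use the elementary ``uniform Cauchy plus pointwise limit $\Rightarrow$ uniform convergence'' argument, which avoids both the monotonicity step and the closed form. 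The paper's route buys, as a byproduct, the explicit identity $d_\cP({\bf A},{\bf B})=\|C_{\bf A}({\bf R})^*C_{\bf A}({\bf R})-C_{\bf B}({\bf R})^*C_{\bf B}({\bf R})\|$ on ${\bf B}_{\bf n}(\cH)^-_0$, which it reuses for (iii); your route is shorter but does not record this identity.

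For (iii), the approaches coincide in spirit: the paper invokes the identity above together with the norm-continuity of ${\bf X}\mapsto C_{\bf X}({\bf R})$ (cited from \cite{Po-pluriharmonic-polyball}), while you prove the needed uniform-in-$r$ resolvent perturbation estimate directly from $r(\Lambda_i({\bf A}))<1$. Your version is self-contained and makes transparent exactly where the hypothesis ${\bf A}\in{\bf B}_{\bf n}(\cH)$ enters.
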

\begin{proof}
Let ${\bf A}=(A_1,\ldots, A_k)$ and ${\bf B}=(B_1,\ldots, B_k)$ be in ${\bf B}_{\bf n}(\cH)^-_0$.
Set $w:=\sum_{\alpha\in \FF_{n_i}^+} e_\alpha^i\otimes h_\alpha\in F^2(H_{n_i})\otimes \cH$, where $h_\alpha\in \cH$ and $\sum_{\alpha\in \FF_{n_i}^+} \|h_\alpha\|^2<\infty$, and let
$$
\tilde{w}:=\sum_{\alpha\in \FF_{n_i}^+} (\underbrace{1\otimes\cdots \otimes  1}_{\text{${i-1}$
times}}  \otimes\,  e_\alpha^i\otimes 1\otimes \cdots \otimes 1)\otimes h_\alpha
$$
be in $F^2(H_{n_1})\otimes \cdots \otimes F^2(H_{n_k})$. Note that
\begin{equation}\label{PP}
\left< P(R_i, rA_i)w,w\right>=\left<\boldsymbol{\cP}({\bf R}, r{\bf A})\tilde{w}, \tilde{w}\right>
\end{equation}
for any $w\in F^2(H_{n_i})\otimes \cH$, where $P(R_i, rA_i)$ is the Poisson kernel associate with the row contraction $rA_i$ and where $R_i=(R_{i,1},\ldots, R_{i,n_i})$ is the $n_i$-tuple of right creation operators acting on the full Fock space $F^2(H_{n_i})$.

Setting $\Lambda_{A_q}:={\bf R}_{q,1}^*\otimes A_{q,1}+\cdots +{\bf R}_{q,n_q}^*\otimes A_{q,n_q}$, $q\in \{1,\ldots, k\}$,  we deduce that
$$
r\Lambda_{A_q}=\frac{1}{2\pi}\int_0^{2\pi} e^{-it}P(re^{it} R_q,A_q),\qquad r\in [0,1),
$$
and, consequently,
\begin{equation*}
\begin{split}
\|rA_q-rB_q\|&=\|r\Lambda_{A_q}-r\Lambda_{B_q}\|\\
&=\left\|
\frac{1}{2\pi}\int_0^{2\pi} e^{-it}\left[P(re^{it} R_q,A_q)-P(re^{it} R_q,B_q)\right]dt\right\|\\
&\leq \sup_{r\in [0,1)}\left\|P(re^{it} R_q,A_q)-P(re^{it} R_q,B_q)\right\|\\
&\leq
\left\|P( R_q,rA_q)-P(R_q,rB_q)\right\|
\end{split}
\end{equation*}
for any $r\in [0,1)$. Hence and using relation \eqref{PP}, we deduce that
\begin{equation} \label{aqbq}
\|A_q-B_q\|\leq d_\cP({\bf A}, {\bf B}),
\qquad {\bf A}, {\bf B}\in {\bf B}_{\bf n}(\cH)^-_0.
\end{equation}
Let ${\bf A}^{(m)}:=(A_1^{(m)},\ldots, A_k^{(m)})$ be a $d_\cP$-Cauchy sequence in ${\bf B}_{\bf n}(\cH)^-_0$. Due to the latter inequality, for each $i\in \{1,\ldots, k\}$, $\{A_i^{(m)}\}_{m=1}^\infty$ is a Cauchy sequence in the norm topology of $[B(\cH)^{n_i}]_1^-$. Consequently, there exists $T_i\in [B(\cH)^{n_i}]_1^-$ such that $\|A_i^{(m)}-T_i\|\to 0$, as $m\to \infty$.
Since ${\bf A}^{(m)}\in {\bf B}_{\bf n}(\cH)^-$, so is ${\bf T}=(T_1,\ldots, T_k)$.
Since ${\bf A}^{(m)}$ is a $d_\cP$-Cauchy sequence, there is $m_0\in \NN$ such that $d_\cP({\bf A}^{(m)}, {\bf A}^{(m_0)})\leq 1$ for any $m\geq m_0$.
On the other hand, ${\bf A}^{(m_0)}\in {\bf B}_{\bf n}(\cH)^-_0$ which shows that
${\bf A}^{(m_0)}\overset{P}{\prec}\, {0}$ and, consequently, there is $c\geq 1$ such that
$\boldsymbol{\cP}({\bf R}, r{\bf A}^{(m_0)})\leq c^2I$ for any $r\in [0,1)$. Hence, we deduce that
$$
\boldsymbol{\cP}({\bf R}, r{\bf A}^{(m)})\leq \left(d_\cP({\bf A}^{(m)}, {\bf A}^{(m_0)})+\left\|\boldsymbol{\cP}({\bf R}, r{\bf A}^{(m_0)})\right\|\right)I
\leq (c^2+1)I
$$
for any $m\geq m_0$. Using the continuity of the map ${\bf X}\mapsto \boldsymbol{\cP}({\bf R}, {\bf X})$ on ${\bf B}_{\bf n}(\cH)$ and taking $m\to \infty$, we obtain
$\boldsymbol{\cP}({\bf R}, r{\bf T})\leq (c^2+1)I$ for any $r\in [0,1)$, which shows that ${\bf T}\overset{P}{\prec}\, {0}$. Thus ${\bf T}\in {\bf B}_{\bf n}(\cH)^-_0$.

Set $G_r:=\boldsymbol{\cP}({\bf S}, r{\bf A}^{(m)})-\boldsymbol{\cP}({\bf S}, r{\bf T})$ and let $0\leq r_1<r_2<1$. Using the noncommutative Berezin transform
we have $\left(\boldsymbol{\cB}_{\frac{r_1}{r_2}{\bf R}}\otimes id\right)[G_{r_2}]=G_{r_1}$. Hence, $\|G_{r_1}\|\leq \|G_{r_2}\|$.  Using this result and the fact that
$$
 \boldsymbol{\cP}({\bf R}, {\bf X})=C_{\bf X}({\bf R})^*C_{\bf X}({\bf R}),\qquad {\bf X}\in {\bf B_n}(\cH),
 $$
where $$C_{\bf X}({\bf R}):=(I\otimes \boldsymbol\Delta_{\bf X}(I)^{1/2})\prod_{i=1}^k(I-{\bf R}_{i,1}\otimes
X_{i,1}^*-\cdots -{\bf R}_{i,n_i}\otimes X_{i,n_i}^*)^{-1}$$ for any
${\bf X}=(X_1,\ldots, X_k)\in {\bf B_n}(\cH)$  with $X_i=(X_{i,1},\ldots, X_{i, n_i})$, we obtain
\begin{equation*}
\begin{split}
d_\cP({\bf A}^{(m)}, {\bf T})&= \left\|\lim_{r\to 1}\left[\boldsymbol{\cP}({\bf R}, r{\bf A})-\boldsymbol{\cP}({\bf R}, r{\bf B})\right]\right\|\\
&=\left\| \lim_{r\to 1} \left[
 C_{r{\bf A}^{(m)}}({\bf R})^*C_{r{\bf A}^{(m)}}({\bf R})-  C_{r{\bf T}}({\bf R})^*C_{r{\bf T}}({\bf R})\right]\right\|\\
 &=\left\|
 C_{{\bf A}^{(m)}}({\bf R})^*C_{{\bf A}^{(m)}}({\bf R})-  C_{\bf T}({\bf R})^*C_{\bf T}({\bf R})
 \right\|.
\end{split}
\end{equation*}
The latter equality holds due to the fact that the spectral radius is upper semicontinuous and, for each $q\in \{1,\ldots, k\}$,  the spectral radii  of
$\Lambda_{{A_q}^{(m)}}$ and  $\Lambda_{T_q}$ are strictly less than $1$. Indeed, in this case we have
$C_{r{\bf A}^{(m)}}({\bf R})\to C_{{\bf A}^{(m)}}({\bf R})$ and  $C_{r{\bf T}}({\bf R})\to C_{{\bf T}}({\bf R})$ in the norm topology,
as $r\to 1$. On the other hand, since that map $Y\mapsto Y^{-1}$ is norm continuous on the open set of invertible operators, $C_{{\bf A}^{(m)}}({\bf R})\to C_{\bf T}({\bf R})$ in norm as $m\to \infty$. Consequently, $d_\cP({\bf A}^{(m)}, {\bf T})\to 0$ as $m\to \infty$, which completes the proof of part (i).
Due to relation \eqref{aqbq}, part (ii) is clear.

To prove part (iii), assume that  ${\bf A}, {\bf B}\in {\bf B}_{\bf n}(\cH)$.
Note that, as above, we have
\begin{equation*}
\begin{split}
d_\cP({\bf A}, {\bf B})&= \left\|\lim_{r\to 1}\left[\boldsymbol{\cP}({\bf R}, r{\bf A})-\boldsymbol{\cP}({\bf R}, r{\bf B})\right]\right\|\\
 &=\left\|
 C_{{\bf A}}({\bf R})^*C_{{\bf A}}({\bf R})-  C_{\bf T}({\bf R})^*C_{\bf T}({\bf R})
 \right\|.
\end{split}
\end{equation*}
According to Proposition 4.1 from \cite{Po-pluriharmonic-polyball}, the map ${\bf X}\to  C_{{\bf X}}({\bf R})$  is continuous in the operator norm topology. Now, one can easily complete the proof of part (iii).
\end{proof}

\begin{lemma}\label{dp2}If ${\bf A}, {\bf B}\in {\bf B_n}(\cH)^-$  are such that
${\bf A}\,\overset{P}{\sim}\, {\bf B}$, then
\begin{equation*}
 \delta_\cP({\bf A},{\bf B})= \frac{1}{2}\sup\left|\ln\frac{\left<\boldsymbol{\cP}({\bf R}, r{\bf A})x,x\right>}{\left<\boldsymbol{\cP}({\bf R}, r{\bf B})x,x\right>}\right|,
 \end{equation*}
where the supremum is taken over all $x\in F^2(H_{n_1})\otimes \cdots \otimes F^2(H_{n_k})\otimes \cH$, $x\neq 0$, and all $r\in [0,1)$.
\end{lemma}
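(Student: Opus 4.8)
The plan is to convert the operator inequalities that define ${\bf A}\overset{P}{{\underset{c}\sim}}\,{\bf B}$ into scalar inequalities between the associated quadratic forms, and then compute the infimum over admissible $c$ explicitly.

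First I would record that the two quadratic forms appearing in the ratio are strictly positive, so that the logarithm is well defined. Indeed, since ${\bf A},{\bf B}\in{\bf B_n}(\cH)^-$, for each $r\in[0,1)$ we have $r{\bf A},r{\bf B}\in r{\bf B_n}(\cH)^-$, and the lower Harnack bound for the Poisson kernel recorded as a consequence of Theorem \ref{H-ineq} gives $\boldsymbol{\cP}({\bf R},r{\bf A})\geq\left(\frac{1-r}{1+r}\right)^k I>0$ and likewise for ${\bf B}$. Hence $\left<\boldsymbol{\cP}({\bf R},r{\bf A})x,x\right>>0$ and $\left<\boldsymbol{\cP}({\bf R},r{\bf B})x,x\right>>0$ for every $x\neq0$ and every $r\in[0,1)$.

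Next, for a fixed $c\geq1$, I would observe that the defining relation ${\bf A}\overset{P}{{\underset{c}\sim}}\,{\bf B}$, namely $\frac{1}{c^2}\boldsymbol{\cP}({\bf R},r{\bf B})\leq\boldsymbol{\cP}({\bf R},r{\bf A})\leq c^2\boldsymbol{\cP}({\bf R},r{\bf B})$ for all $r$, is an inequality between positive operators and hence is equivalent to its scalarization: $\frac{1}{c^2}\left<\boldsymbol{\cP}({\bf R},r{\bf B})x,x\right>\leq\left<\boldsymbol{\cP}({\bf R},r{\bf A})x,x\right>\leq c^2\left<\boldsymbol{\cP}({\bf R},r{\bf B})x,x\right>$ for all $x$ and all $r\in[0,1)$. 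Dividing through by the strictly positive quantity $\left<\boldsymbol{\cP}({\bf R},r{\bf B})x,x\right>$ and taking logarithms, this is in turn equivalent to $\left|\ln\frac{\left<\boldsymbol{\cP}({\bf R},r{\bf A})x,x\right>}{\left<\boldsymbol{\cP}({\bf R},r{\bf B})x,x\right>}\right|\leq 2\ln c$ for all $x\neq0$ and all $r\in[0,1)$. Writing $M$ for the supremum appearing in the statement of the lemma, the latter condition reads exactly $M\leq 2\ln c$; and since ${\bf A}\overset{P}{\sim}\,{\bf B}$ guarantees some finite $c$ with this property, $M<\infty$.

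Finally I would compute $\omega_\cP$. By the equivalence just established, $\left\{c>1:\ {\bf A}\overset{P}{{\underset{c}\sim}}\,{\bf B}\right\}=\left\{c>1:\ c\geq e^{M/2}\right\}$, whose infimum equals $e^{M/2}$ (whether or not $M=0$). Therefore $\delta_\cP({\bf A},{\bf B})=\ln\omega_\cP({\bf A},{\bf B})=\tfrac12 M$, which is the asserted formula. The only point requiring genuine care is the strict positivity of the denominators, which is precisely what the Harnack lower bound of Theorem \ref{H-ineq} supplies; everything else is a direct rewriting of the definition of $\delta_\cP$.
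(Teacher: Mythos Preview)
Your proof is correct and follows essentially the same approach as the paper's own proof: scalarize the operator inequalities defining ${\bf A}\overset{P}{{\underset{c}\sim}}\,{\bf B}$, convert them to a bound on $|\ln(\cdot/\cdot)|$, and identify the infimum over admissible $c$. The only cosmetic difference is that you invoke the Harnack lower bound of Theorem \ref{H-ineq} to justify strict positivity of the quadratic forms, whereas the paper simply notes that $\boldsymbol{\cP}({\bf R},r{\bf B})$ is an invertible operator; both are valid, and your packaging of the two inequalities as a single equivalence with a direct computation of the infimum is arguably cleaner.
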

\begin{proof}
Let ${\bf A}, {\bf B}\in {\bf B_n}(\cH)^-$  be  such that
${\bf A}\,\overset{P}{{\underset{c}\sim}}\, {\bf B}$ with $c\geq 1$.
Then
\begin{equation*}
\frac{1}{c^2}\left<\boldsymbol{\cP}({\bf R}, r{\bf B})x,x\right>\leq  \left<\boldsymbol{\cP}({\bf R}, r{\bf A})x,x\right>\leq c^2 \left<\boldsymbol{\cP}({\bf R}, r{\bf B})x,x\right>
\end{equation*}
 for any $r\in [0,1)$ and any  $x\in F^2(H_{n_1})\otimes \cdots \otimes F^2(H_{n_k})\otimes \cH$ with $x\neq 0$. Since
$\boldsymbol{\cP}({\bf R}, r{\bf B})$ is an invertible operator, we deduce that
$$
-\ln c\leq \frac{1}{2} \ln\frac{\left<\boldsymbol{\cP}({\bf R}, r{\bf A})x,x\right>}{\left<\boldsymbol{\cP}({\bf R}, r{\bf B})x,x\right>}\leq \ln c
$$
which implies
\begin{equation*} M:=
\frac{1}{2}\sup\left|\ln\frac{\left<\boldsymbol{\cP}({\bf R}, r{\bf A})x,x\right>}{\left<\boldsymbol{\cP}({\bf R}, r{\bf B})x,x\right>}\right|\leq  \delta_\cP({\bf A},{\bf B}).
 \end{equation*}
 To prove the reverse inequality, note that
 $$
 \frac{1}{2} \left|\ln\frac{\left<\boldsymbol{\cP}({\bf R}, r{\bf A})x,x\right>}{\left<\boldsymbol{\cP}({\bf R}, r{\bf B})x,x\right>}\right|\leq M
 $$
which is equivalent to
\begin{equation*}
e^{-2M}\left<\boldsymbol{\cP}({\bf R}, r{\bf B})x,x\right>\leq  \left<\boldsymbol{\cP}({\bf R}, r{\bf A})x,x\right>\leq e^{2M} \left<\boldsymbol{\cP}({\bf R}, r{\bf B})x,x\right>
\end{equation*}
 for any $r\in [0,1)$ and any  $x\in F^2(H_{n_1})\otimes \cdots \otimes F^2(H_{n_k})\otimes \cH$ with $x\neq 0$.
 Consequently, $\delta_\cP({\bf A},{\bf B})\leq m$. The proof is complete.
\end{proof}

\begin{theorem}\label{dp-dh} Let  $\Delta$ be a Poisson part of  ${\bf B_n}(\cH)_0^-$.
Then   the following properties hold:
\begin{enumerate}
\item[(i)]  $\delta_\cP$ is a complete metric on $\Delta$.
\item[(ii)] the $\delta_\cP$-topology is stronger than the $d_\cP$-topology on $\Delta$.
\item[(iii)] the $\delta_\cP$-topology,  the $d_\cP$-topology, and the operator norm topology coincide on the open polyball ${\bf B}_{\bf n}(\cH)$.
    \item[(iv)] the $\delta_H$-topology is stronger that the $\delta_\cP$-topology on ${\bf B}_{\bf n}(\cH)$
\end{enumerate}
\end{theorem}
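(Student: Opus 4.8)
The plan is to reduce the four assertions to three tools already established: the defining formula $d_\cP({\bf A},{\bf B})=\sup_{r}\|\boldsymbol{\cP}({\bf R},r{\bf A})-\boldsymbol{\cP}({\bf R},r{\bf B})\|$, the explicit formula of Theorem \ref{dp} together with norm-continuity of ${\bf X}\mapsto C_{\bf X}({\bf R})$, and the completeness and topology statements for $d_\cP$ in Theorem \ref{prop-dp}. Throughout I write $P_{\bf X}^{(r)}:=\boldsymbol{\cP}({\bf R},r{\bf X})$, and recall that membership of a point in ${\bf B}_{\bf n}(\cH)_0^-$ means precisely $\sup_r\|P_{\bf X}^{(r)}\|<\infty$. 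I would establish the two pointwise comparison inequalities (iv) and (ii) first, then deduce completeness (i), and finally (iii).

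For (iv), the key observation is that ${\bf X}\mapsto\boldsymbol{\cP}({\bf R},{\bf X})$ is itself a positive free $k$-pluriharmonic function. Hence if ${\bf A}\overset{H}{\underset{c}{\sim}}{\bf B}$, applying the defining Harnack inequality to $F=\boldsymbol{\cP}({\bf R},\cdot)$ gives $\frac{1}{c^2}P_{\bf B}^{(r)}\le P_{\bf A}^{(r)}\le c^2P_{\bf B}^{(r)}$ for all $r$, that is, ${\bf A}\overset{P}{\underset{c}{\sim}}{\bf B}$; taking the infimum over admissible $c$ yields $\delta_\cP({\bf A},{\bf B})\le\delta_H({\bf A},{\bf B})$, so every $\delta_\cP$-ball contains a $\delta_H$-ball and the $\delta_H$-topology is stronger. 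For (ii), if ${\bf A}\overset{P}{\underset{c}{\sim}}{\bf B}$ with $c=e^{\delta_\cP({\bf A},{\bf B})}$, then $P_{\bf A}^{(r)}-P_{\bf B}^{(r)}\le (c^2-1)P_{\bf B}^{(r)}$ and symmetrically, so for self-adjoint operators $\|P_{\bf A}^{(r)}-P_{\bf B}^{(r)}\|\le (c^2-1)\max\{\|P_{\bf A}^{(r)}\|,\|P_{\bf B}^{(r)}\|\}$; since ${\bf A},{\bf B}\in{\bf B}_{\bf n}(\cH)_0^-$ the right-hand factor is bounded uniformly in $r$, giving $d_\cP({\bf A},{\bf B})\le (e^{2\delta_\cP({\bf A},{\bf B})}-1)M$. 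Thus $\delta_\cP$-convergence forces $d_\cP$-convergence, so the $\delta_\cP$-topology is stronger than the $d_\cP$-topology.

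Part (i) is where the real work lies and will be the main obstacle. Given a $\delta_\cP$-Cauchy sequence $({\bf A}^{(m)})$ in $\Delta$, part (ii) makes it $d_\cP$-Cauchy, so by Theorem \ref{prop-dp}(i) it converges in $d_\cP$ to some ${\bf T}\in{\bf B}_{\bf n}(\cH)_0^-$; the remaining task is to show that ${\bf T}$ lies in the same Poisson part $\Delta$ and that the convergence upgrades to $\delta_\cP$. The decisive maneuver is to fix $\epsilon>0$ and $N$ with $\delta_\cP({\bf A}^{(m)},{\bf A}^{(l)})<\epsilon$ for $m,l\ge N$, write $e^{-2\epsilon}P_{{\bf A}^{(l)}}^{(r)}\le P_{{\bf A}^{(m)}}^{(r)}\le e^{2\epsilon}P_{{\bf A}^{(l)}}^{(r)}$ for all $r$, and then let $l\to\infty$: since $d_\cP$-convergence gives $\|P_{{\bf A}^{(l)}}^{(r)}-P_{\bf T}^{(r)}\|\le d_\cP({\bf A}^{(l)},{\bf T})\to0$ uniformly in $r$, the positivity of $e^{2\epsilon}P_{{\bf A}^{(l)}}^{(r)}-P_{{\bf A}^{(m)}}^{(r)}$ and of $P_{{\bf A}^{(m)}}^{(r)}-e^{-2\epsilon}P_{{\bf A}^{(l)}}^{(r)}$ is preserved in the norm limit, yielding $e^{-2\epsilon}P_{\bf T}^{(r)}\le P_{{\bf A}^{(m)}}^{(r)}\le e^{2\epsilon}P_{\bf T}^{(r)}$ for all $r$. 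This says ${\bf A}^{(m)}\overset{P}{\underset{e^\epsilon}{\sim}}{\bf T}$, so by transitivity ${\bf T}$ lands in the part $\Delta$ and $\delta_\cP({\bf A}^{(m)},{\bf T})\le\epsilon$ for every $m\ge N$, proving $\delta_\cP$-completeness.

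Finally, (iii) follows by combining (ii) with the facts already available on the open ball. Theorem \ref{prop-dp}(iii) identifies the $d_\cP$-topology with the norm topology there, and (ii) shows $\delta_\cP$ is stronger than $d_\cP$; it remains only to see that norm-convergence implies $\delta_\cP$-convergence. For this I would invoke the explicit formula $\delta_\cP({\bf A},{\bf B})=\ln\max\{\|C_{\bf A}({\bf R})C_{\bf B}({\bf R})^{-1}\|,\|C_{\bf B}({\bf R})C_{\bf A}({\bf R})^{-1}\|\}$ of Theorem \ref{dp}, the norm-continuity of ${\bf X}\mapsto C_{\bf X}({\bf R})$ (Proposition 4.1 of \cite{Po-pluriharmonic-polyball}), and the norm-continuity of inversion on the invertibles: if ${\bf A}^{(m)}\to{\bf B}$ in norm then $C_{{\bf A}^{(m)}}({\bf R})\to C_{\bf B}({\bf R})$ and $C_{{\bf A}^{(m)}}({\bf R})^{-1}\to C_{\bf B}({\bf R})^{-1}$ in norm, so both norms above tend to $\|I\|=1$ and $\delta_\cP({\bf A}^{(m)},{\bf B})\to0$. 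Hence the three topologies coincide on ${\bf B}_{\bf n}(\cH)$.
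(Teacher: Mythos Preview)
Your proof is correct. Parts (ii), (iv), and (i) follow essentially the same path as the paper: the same pointwise comparison $\delta_\cP\le\delta_H$, the same inequality $d_\cP({\bf A},{\bf B})\le(e^{2\delta_\cP({\bf A},{\bf B})}-1)\max\{M_{\bf A},M_{\bf B}\}$, and the same completeness argument (pass to the $d_\cP$-limit via Theorem~\ref{prop-dp}, then freeze $m$ and let $l\to\infty$ in the two-sided Poisson inequality). One small gloss worth making explicit: to pass from ``$\delta_\cP$-Cauchy'' to ``$d_\cP$-Cauchy'' (and likewise from $\delta_\cP$-convergence to $d_\cP$-convergence) you need the factors $M_{{\bf A}^{(m)}}=\sup_r\|P_{{\bf A}^{(m)}}^{(r)}\|$ to be uniformly bounded in $m$; this follows immediately from the Cauchy condition since $P_{{\bf A}^{(m)}}^{(r)}\le e^{2\epsilon}P_{{\bf A}^{(m_0)}}^{(r)}$ for $m\ge m_0$.

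Where you genuinely diverge from the paper is in part (iii). The paper first proves Lemma~\ref{dp2} (the variational formula $\delta_\cP({\bf A},{\bf B})=\tfrac12\sup_{x,r}|\ln(\langle P_{\bf A}^{(r)}x,x\rangle/\langle P_{\bf B}^{(r)}x,x\rangle)|$) and from it derives the estimate
\[
\delta_\cP({\bf A},{\bf B})\le\tfrac12\ln\bigl(1+\max\{\|\boldsymbol\cP({\bf R},{\bf A})^{-1}\|,\|\boldsymbol\cP({\bf R},{\bf B})^{-1}\|\}\,d_\cP({\bf A},{\bf B})\bigr),
\]
using this to show that $d_\cP$-convergence (hence norm-convergence) implies $\delta_\cP$-convergence. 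You bypass Lemma~\ref{dp2} entirely and instead invoke the closed-form expression of Theorem~\ref{dp} together with norm-continuity of ${\bf X}\mapsto C_{\bf X}({\bf R})$ and of inversion. Your route is shorter and more direct for the topological statement; the paper's route has the advantage of producing an explicit quantitative bound relating $\delta_\cP$ and $d_\cP$ on the open polyball (recorded in the corollary following the theorem), which your argument does not yield.
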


\begin{proof}
Let ${\bf A}, {\bf B}\in \Delta$. Due to the definition of $\omega_\cP$, we have
$$
\boldsymbol{\cP}({\bf R}, r{\bf A})\leq\omega_\cP({\bf A}, {\bf B})^2 \boldsymbol{\cP}({\bf R}, r{\bf B}), \qquad r\in [0,1),
$$
which implies
$$
\boldsymbol{\cP}({\bf R}, r{\bf A})-\boldsymbol{\cP}({\bf R}, r{\bf B})\leq[\omega_\cP({\bf A}, {\bf B})^2-1]M_{\bf B} I, \qquad r\in [0,1),
$$
where $M_{\bf B}:=\sup_{r\in [0,1)} \|\boldsymbol{\cP}({\bf R}, r{\bf B})\|<\infty$ due to the fact that ${\bf B}\overset{P}{\prec}\, {0}$. Similarly, we can obtain the inequality
$$
\boldsymbol{\cP}({\bf R}, r{\bf B})-\boldsymbol{\cP}({\bf R}, r{\bf A})\leq[\omega_\cP({\bf A}, {\bf B})^2-1]M_{\bf A} I, \qquad r\in [0,1).
$$
Consequently,  since $\boldsymbol{\cP}({\bf R}, r{\bf B})-\boldsymbol{\cP}({\bf R}, r{\bf A})$ is a self-adjoint operator, we obtain
$$
\left\|\boldsymbol{\cP}({\bf R}, r{\bf B})-\boldsymbol{\cP}({\bf R}, r{\bf A})\right\|\leq \max\{M_{\bf A}, M_{\bf B}\}[\omega_\cP({\bf A}, {\bf B})^2-1],\qquad r\in [0,1).
$$
Hence, we deduce that
\begin{equation}\label{dpm}
d_\cP({\bf A}, {\bf B})\leq \max\{M_{\bf A}, M_{\bf B}\}\left(e^{2\delta_\cP({\bf A}, {\bf B})}-1\right).
\end{equation}
Now, we prove that $\delta_\cP$ is a complete metric on $\Delta$.
Let $\{{\bf A}^{(m)}\}_{m=1}^\infty$ be a $\delta_\cP$-Cauchy sequence in $\Delta$.
For any $\epsilon>0$, there is $m_0\in \NN$ such that
\begin{equation}
\label{Cau}
\delta_\cP({\bf A}^{(m)}, {\bf A}^{(p)})<\epsilon, \qquad m,p\geq m_0.
\end{equation}
Since ${\bf A}^{(m)}\overset{P}{\prec}\, {\bf A}^{(m_0)}$ and ${\bf A}^{(m)}\overset{P}{\prec}\, 0$, we have
\begin{equation}\label{po}
\begin{split}
\boldsymbol{\cP}({\bf R}, r{\bf A}^{(m)})&\leq \omega_\cP({\bf A}^{(m)}, {\bf A}^{(m_0)})\boldsymbol{\cP}({\bf R}, r{\bf A}^{(m_0)})\\
&\leq \omega_\cP({\bf A}^{(m)}, {\bf A}^{(m_0)}) \sup_{r\in [0,1)}\|\boldsymbol{\cP}({\bf R}, r{\bf A}^{(m_0)})\| I
\end{split}
\end{equation}
for any $r\in[0,1)$. Hence and using \eqref{Cau}, we obtain
$$
\left\|\boldsymbol{\cP}({\bf R}, r{\bf A}^{(m)})\right\|\leq
 \sup_{r\in [0,1)}\|\boldsymbol{\cP}({\bf R}, r{\bf A}^{(m_0)})\| e^{2\epsilon}
 $$
 for any $m\geq m_0$ and $r\in [0,1)$. Since $\sup_{r\in [0,1)}\|\boldsymbol{\cP}({\bf R}, r{\bf A}^{(m_0)})\|<\infty$, we deduce that
 the sequence
 $\left\{\sup_{r\in [0,1)}\left\|\boldsymbol{\cP}({\bf R}, r{\bf A}^{(m)})\right\|\right\}_{m=1}^\infty$ is bounded. The inequality
 \eqref{dpm}, implies that $\{{\bf A}^{(m)}\}_{m=1}^\infty$ is a $d_\cP$-Cauchy sequence.
 According to  Theorem \ref{prop-dp}, there exists ${\bf A}\in {\bf B}_{\bf n}(\cH)^-_0$ such that
 \begin{equation}
 \label{daa}
 \lim_{m\to \infty}d_\cP({\bf A}^{(m)}, {\bf A})\to 0.
 \end{equation}
Combining relations \eqref{Cau}  and \eqref{po}
\begin{equation}\label{po2}
\begin{split}
\boldsymbol{\cP}({\bf R}, r{\bf A}^{(m)})&\leq \omega_\cP({\bf A}^{(m)}, {\bf A}^{(m_0)})\boldsymbol{\cP}({\bf R}, r{\bf A}^{(m_0)})\\
&\leq  e^{2\epsilon} \sup_{r\in [0,1)}\boldsymbol{\cP}({\bf R}, r{\bf A}^{(m_0)})
\end{split}
\end{equation}
for any $m\geq m_0$ and $r\in [0,1)$. Using relation \eqref{daa} and passing to the limit as $m\to \infty$ in  relation \eqref{po2}, we obtain
\begin{equation}
\label{pe}
\boldsymbol{\cP}({\bf R}, r{\bf A})\leq e^{2\epsilon} \boldsymbol{\cP}({\bf R}, r{\bf A}^{(m_0)})
\end{equation}
for any $r\in [0,1)$, which shows that ${\bf A}\overset{P}{\prec}\, {\bf A}^{(m_0)}$. On the other hand, since ${\bf A}^{(m_0)}\overset{P}{\prec}\, {\bf A}^{(m)}$  for any $m\geq m_0$, relation  \eqref{Cau} implies
\begin{equation}\label{po3}
\begin{split}
\boldsymbol{\cP}({\bf R}, r{\bf A}^{(m_0)})&\leq \omega_\cP({\bf A}^{(m_0)}, {\bf A}^{(m)})\boldsymbol{\cP}({\bf R}, r{\bf A}^{(m)})\\
&\leq  e^{2\epsilon} \boldsymbol{\cP}({\bf R}, r{\bf A}^{(m)})
\end{split}
\end{equation}
for any $m\geq m_0$ and $r\in [0,1)$.
Due to Theorem \ref{prop-dp}, the $d_\cP$-topology is stronger than the norm topology on ${\bf B_n}(\cH)_0^-$. Therefore, relation \eqref{daa} implies ${\bf A}^{(m)}\to {\bf A}\in {\bf B_n}(\cH)_0^-$ in the operator norm topology.
Taking $m\to \infty$ in  relation \eqref{po3}, we obtain
\begin{equation}\label{po4}
\begin{split}
\boldsymbol{\cP}({\bf R}, r{\bf A}^{(m_0)})\leq
  e^{2\epsilon} \boldsymbol{\cP}({\bf R}, r{\bf A})
\end{split}
\end{equation}
for any  $r\in [0,1)$. Consequently, ${\bf A}^{(m_0)}\overset{P}{\prec}\, {\bf A}$ which together with relation  \eqref{po3} imply
${\bf A}^{(m_0)}\,\overset{P}{{\sim}}\, {\bf A}$. Thus ${\bf A}\in \Delta$.
Note that the inequalities \eqref{pe} and \eqref{po4} show that $\omega_\cP({\bf A}^{(m_0)}, {\bf A})\leq e^{2\epsilon}$ and, therefore,
$\delta_\cP({\bf A}^{(m_0)}, {\bf A})\leq \epsilon$. Now, using relation \eqref{Cau}, we obtain
$\delta_\cP({\bf A}^{(m)}, {\bf A})\leq 2\epsilon$ for any $m\geq m_0$. This shows that
$\delta_\cP({\bf A}^{(m)}, {\bf A})\to 0$ as $m\to \infty$ and completes the proof that $\delta_\cP$  is a complete metric on $\Delta$. Note that we have already proved part (ii) of the theorem.

Now, we prove part (iii). Assume that  ${\bf A}, {\bf B}\in {\bf B_n}(\cH)$.
Since $\boldsymbol{\cP}({\bf R}, {\bf B})$ is a positive invertible operator, we have
$I\leq \|\boldsymbol{\cP}({\bf R}, {\bf B})^{-1}\| \boldsymbol{\cP}({\bf R}, {\bf B})$. Using the fact that the Berezin transform $ \boldsymbol{\cB}_{r{\bf R}}\otimes id$ is a completely positive linear map, we deduce that
$I\leq \|\boldsymbol{\cP}({\bf R}, {\bf B})^{-1}\| \boldsymbol{\cP}({\bf R}, r{\bf B})$ for any $r\in [0,1)$. Consequently,
\begin{equation*}
\begin{split}
\frac{\left<\boldsymbol{\cP}({\bf R}, r{\bf A})x,x\right>}{\left<\boldsymbol{\cP}({\bf R}, r{\bf B})x,x\right>}
-1&\leq \frac{\|\boldsymbol{\cP}({\bf R}, {\bf B})^{-1}\|}{\|x\|}
\left<(\boldsymbol{\cP}({\bf R}, r{\bf A})-\boldsymbol{\cP}({\bf R}, r{\bf B}))x,x\right>\\
&\leq \|\boldsymbol{\cP}({\bf R}, {\bf B})^{-1}\|d_\cP({\bf A}, {\bf B})
\end{split}
\end{equation*}
for any  $x\in F^2(H_{n_1})\otimes \cdots \otimes F^2(H_{n_k})\otimes \cH$, $x\neq 0$, and all $r\in [0,1)$. Hence, we obtain
$$
\ln\frac{\left<\boldsymbol{\cP}({\bf R}, r{\bf A})x,x\right>}{\left<\boldsymbol{\cP}({\bf R}, r{\bf B})x,x\right>}
\leq \ln \left(1+ \|\boldsymbol{\cP}({\bf R}, {\bf B})^{-1}\|d_\cP({\bf A}, {\bf B})\right).
$$
Interchanging ${\bf A}$  with ${\bf B}$, we obtain a similar inequality. Putting the two inequalities together, we deduce that
$$
\left|\ln\frac{\left<\boldsymbol{\cP}({\bf R}, r{\bf A})x,x\right>}{\left<\boldsymbol{\cP}({\bf R}, r{\bf B})x,x\right>}\right|
\leq \ln \left(1+ \max\left\{\|\boldsymbol{\cP}({\bf R}, {\bf A})^{-1}\|,\|\boldsymbol{\cP}({\bf R}, {\bf B})^{-1}\|\right\}d_\cP({\bf A}, {\bf B})\right)
$$
for any  $x\in F^2(H_{n_1})\otimes \cdots \otimes F^2(H_{n_k})\otimes \cH$, $x\neq 0$, and all $r\in [0,1)$.
Using Lemma \ref{dp2}, we obtain
\begin{equation}
\label{ineq-d}
 \delta_\cP({\bf A},{\bf B})
 \leq \frac{1}{2}
 \ln \left(1+ \max\left\{\|\boldsymbol{\cP}({\bf R}, {\bf A})^{-1}\|,\|\boldsymbol{\cP}({\bf R}, {\bf B})^{-1}\|\right\}d_\cP({\bf A}, {\bf B})\right).
 \end{equation}
Let $\{{\bf A}^{(m)}\}_{m=1}^\infty$ be a sequence of elements in ${\bf B}_{\bf n}(\cH)$ and ${\bf A}\in {\bf B}_{\bf n}(\cH)$ be such that $d_\cP({\bf A}^{(m)},{\bf A} )\to 0$, as $m\to\infty$.
This implies
$\boldsymbol{\cP}({\bf R}, {\bf A}^{(m)})\to \boldsymbol{\cP}({\bf R}, {\bf A})$ in the operator norm topology, as $m\to\infty$. Since ${\bf A}^{(m)}, {\bf A}\in {\bf B}_{\bf n}(\cH)$, the operators
$\boldsymbol{\cP}({\bf R}, {\bf A}^{(m)})$ and $\boldsymbol{\cP}({\bf R}, {\bf A})$ are invertible and, consequently,
$\boldsymbol{\cP}({\bf R}, {\bf A}^{(m)})^{-1}\to \boldsymbol{\cP}({\bf R}, {\bf A})^{-1}$ in the operator norm topology, as $m\to\infty$. Now, it is clear that  there is $M>0$ such that
$\left\|\boldsymbol{\cP}({\bf R}, {\bf A}^{(m)})^{-1}\right\|\leq M$
for any $m\in \NN$. Applying inequality \eqref{ineq-d}, we obtain
$$
 \delta_\cP({\bf A}^{(m)},{\bf A})\leq \frac{1}{2} \ln\left[1+Md_\cP({\bf A}^{(m)},{\bf A})\right],\qquad m\in \NN.
 $$
Since $d_\cP({\bf A}^{(m)},{\bf A})\to$ as $m\to \infty$, we deduce that
$\delta_\cP({\bf A}^{(m)},{\bf A})\to 0$ as well. This shows that the $d_\cP$-topology on ${\bf B}_{\bf n}(\cH)$ is stronger than the $\delta_\cP$-topology. On the other hand, due to part (ii) of this theorem and the fact that ${\bf B}_{\bf n}(\cH)$ is a Poisson part in ${\bf B}_{\bf n}(\cH)^-_0$, we conclude that the $\delta_\cP$-topology coincides with the
$d_\cP$-topology on ${\bf B}_{\bf n}(\cH)$.
Now, using Theorem \ref{prop-dp} part (iii), we complete the proof of item (iii).

According to Theorem \ref{foias2}, the open unit polyball ${\bf B}_{\bf n}(\cH)$ is the  Harnack (respectively, Poisson) part  of ${\bf B}_{\bf n}(\cH)^-$ which contains the origin. Since $\delta_\cP({\bf X}, {\bf Y})\leq \delta_H({\bf X}, {\bf Y})$ for ${\bf X},{\bf Y}\in {\bf B}_{\bf n}(\cH)$, item (iv) follows.
The proof is complete.
\end{proof}

\begin{corollary} If  $\Delta$ is a Poisson part of  ${\bf B_n}(\cH)_0^-$, then
 \begin{equation*}
 \delta_\cP({\bf A},{\bf B})
 \geq \frac{1}{2}
 \ln \left(1+ \frac{d_\cP({\bf A},{\bf B})}{\max\left\{
 \sup_{r\in [0,1)}\|\boldsymbol{\cP}({\bf R}, {\bf A}) \|, \sup_{r\in [0,1)}\|\boldsymbol{\cP}({\bf R}, {\bf B})\|\right\}} \right).
 \end{equation*}

If ${\bf A},{\bf B}\in {\bf B}_{\bf n}(\cH)$, then
 \begin{equation*}
 \delta_\cP({\bf A},{\bf B})
 \leq \frac{1}{2}
 \ln \left(1+ \max\left\{\|\boldsymbol{\cP}({\bf R}, {\bf A})^{-1}\|,\|\boldsymbol{\cP}({\bf R}, {\bf B})^{-1}\|\right\}d_\cP({\bf A}, {\bf B})\right).
 \end{equation*}
\end{corollary}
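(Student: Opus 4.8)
The plan is to observe that both displayed inequalities are purely algebraic rearrangements of bounds already established inside the proof of Theorem \ref{dp-dh}, so that no new analytic input is required. First I would set $M_{\bf X}:=\sup_{r\in[0,1)}\|\boldsymbol{\cP}({\bf R}, r{\bf X})\|$ for ${\bf X}\in\Delta$, reading the supremum in the statement of the corollary accordingly; this quantity is finite because every ${\bf X}\in\Delta\subset{\bf B}_{\bf n}(\cH)_0^-$ satisfies ${\bf X}\overset{P}{\prec}\, 0$, so $\boldsymbol{\cP}({\bf R}, r{\bf X})\leq c^2 I$ for some $c\geq 1$ and all $r\in[0,1)$, and moreover $M_{\bf X}\geq\|\boldsymbol{\cP}({\bf R}, 0)\|=\|I\|=1$.

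For the upper bound (the second inequality), when ${\bf A},{\bf B}\in {\bf B}_{\bf n}(\cH)$ this is precisely inequality \eqref{ineq-d}, which was derived in the proof of Theorem \ref{dp-dh} from Lemma \ref{dp2} together with the invertibility of $\boldsymbol{\cP}({\bf R}, {\bf B})$ and the positivity of the Berezin transform. I would simply invoke that inequality verbatim.

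For the lower bound, I would start from relation \eqref{dpm}, namely
$$
d_\cP({\bf A}, {\bf B})\leq \max\{M_{\bf A}, M_{\bf B}\}\left(e^{2\delta_\cP({\bf A}, {\bf B})}-1\right).
$$
Dividing both sides by $\max\{M_{\bf A}, M_{\bf B}\}$, which is legitimate since this maximum is at least $1$, then adding $1$, applying the monotone increasing function $t\mapsto\ln(1+t)$, and finally dividing by $2$, one obtains
$$
\frac{1}{2}\ln\left(1+\frac{d_\cP({\bf A}, {\bf B})}{\max\{M_{\bf A}, M_{\bf B}\}}\right)\leq \delta_\cP({\bf A}, {\bf B}),
$$
which is exactly the first displayed inequality.

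Since the substantive estimates \eqref{dpm} and \eqref{ineq-d} are already in hand from Theorem \ref{dp-dh}, there is no genuine obstacle here; the only points to record are the finiteness and positivity of $\max\{M_{\bf A}, M_{\bf B}\}$ and the monotonicity of $t\mapsto\ln(1+t)$, both of which are immediate.
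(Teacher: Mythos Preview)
Your proposal is correct and matches the paper's approach exactly: the corollary is stated immediately after Theorem \ref{dp-dh} without a separate proof, precisely because both inequalities are algebraic rearrangements of \eqref{dpm} and \eqref{ineq-d} from that theorem's proof. Your observations about the finiteness and positivity of $M_{\bf X}$ are the only bookkeeping needed, and you have them right.
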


\bigskip

\section{Hyperbolic metric on the regular  polydisk}

Using a characterization of positive free $k$-pluriharmonic functions on regular polydisks   and the results of the previous sections, we prove that the Harnack parts and the Poisson parts on  $ {\bf D}^k(\cH)^-$ coincide, and so are   the metrics $\delta_H$ and $\delta_\cP$.   We show that the hyperbolic metric $\delta_H$ on ${\bf D}^k(\cH)$ has similar properties to the Poincar\' e distance on the open unit disc $\DD$.

Let $\Omega\subset {\bf F}_{\bf n}^+\times {\bf F}_{\bf n}^+$ be the set of all  pairs  $(\boldsymbol \alpha, \boldsymbol \beta)$ where $\boldsymbol \alpha =(\alpha_1,\ldots, \alpha_k), \boldsymbol \beta=(\beta_1,\ldots, \beta_k)$ are in ${\bf F }_{\bf n}^+:=\FF_{n_1}^+\otimes \cdots \otimes \FF_{n_k}^+$ such that
$\alpha_i,\beta_i\in \FF_{n_i}^+, |\alpha_i|=m_i^-$, and $ |\beta_i|=m_i^+$ for some $m_i\in \ZZ$.
    In \cite{Po-pluriharmonic-polyball}, we proved that a map $F:{\bf B_n}(\cH)\to B(\cE)\otimes_{min} B(\cH)$, with $F(0)=I$,   is a positive free $k$-pluriharmonic function on the regular  polyball if and only if   it has the form
 $$
 F({\bf X})=
 \sum_{(\boldsymbol \alpha, \boldsymbol \beta)\in \Omega}
 P_\cE{\bf V}_{\widetilde{\boldsymbol\alpha}}^*{\bf V}_{\widetilde{\boldsymbol\beta}}|_\cE\otimes
 {\bf X}_{\boldsymbol \alpha} {\bf X}_{\boldsymbol\beta}^*,
 $$
 where ${\bf V}=(V_1,\ldots, V_k)$ is a
 $k$-tuple   of commuting row isometries on a space $\cK\supset \cE$ such that
  $$
  \sum_{(\boldsymbol \alpha, \boldsymbol \beta)\in \Omega}
 P_\cE{\bf V}_{\widetilde{\boldsymbol\alpha}}^*{\bf V}_{\widetilde{\boldsymbol\beta}}|_\cE\otimes r^{|\boldsymbol\alpha|+|\boldsymbol\beta|}
 {\bf S}_{\boldsymbol \alpha} {\bf S}_{\boldsymbol\beta}^*\geq 0,\qquad r\in [0,1),
  $$
the series is convergent in the operator topology,
and
 $\widetilde{\boldsymbol\alpha}=(\widetilde{\alpha}_1,\ldots \widetilde{\alpha}_k)$ is the reverse of ${\boldsymbol\alpha}=({\alpha_1},\ldots {\alpha_k})$, i.e.
  $\widetilde \alpha_i= g^i_{i_k}\cdots g^i_{i_1}$ if
   $\alpha_i=g^i_{i_1}\cdots g^i_{i_k}\in\FF_{n_i}^+$.
As a consequence of this result we have the following characterization of positive
 free $k$-pluriharmonic function  on regular polidisks. We include a proof for completeness.
In what follows we consider the {\it regular polydisc}   ${\bf D}^k(\cH):={\bf B}_{(1,\ldots, 1)}(\cH)$.

\begin{proposition} \label{polydisk}
 Let  $F:{\bf D}^k(\cH)\to B(\cE)\otimes_{min} B(\cH)$ be a free $k$-pluriharmonic function with $F(0)=I$. Then $F$ is positive if and only if
$$
F({\bf X})=(P_\cE\otimes I)\boldsymbol{\cP}({\bf U}, {\bf X})|_{\cE\otimes \cH},
$$
where  ${\bf U}=(U_1,\ldots, U_k)$ is a $k$-tuple of  commuting unitaries on a Hilbert space $\cK\supset \cE$, and the free pluriharmonic Poisson kernel  $\boldsymbol{\cP}({\bf U}, {\bf X})$   is equal to
\begin{equation*}
  \sum_{m_1\in \ZZ}\cdots \sum_{m_k\in \ZZ} ({U}_{1}^*)^{m_1^-}\cdots ({ U}_{k}^*)^{m_k^-}{U}_{1}^{m_1^+}\cdots ({ U}_{k})^{m_k^+}
\otimes { X}_{1}^{m_1^-}\cdots {X}_{k}^{m_k^-}({X}_{1}^*)^{m_1^+}\cdots ({X}_{k}^*)^{m_k^+}
\end{equation*}
for any ${\bf X}=(X_1,\ldots, X_k)\in {\bf D}^k(\cH)$,
where the convergence of the multi-series is in the operator norm topology.
\end{proposition}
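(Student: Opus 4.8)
The statement is an equivalence, so the plan is to treat the two implications separately, using the characterization of positive free $k$-pluriharmonic functions recalled just above, specialized to $n_1=\cdots=n_k=1$. In this case each $\FF_{n_i}^+$ is generated by a single letter, so $\widetilde\alpha_i=\alpha_i$ and a ``row isometry'' $V_i$ is just a single isometry; the cited representation then reads
$$F({\bf X})=\sum_{m_1\in\ZZ}\cdots\sum_{m_k\in\ZZ}P_\cE(V_1^*)^{m_1^-}\cdots(V_k^*)^{m_k^-}V_1^{m_1^+}\cdots V_k^{m_k^+}\big|_\cE\otimes X_1^{m_1^-}\cdots X_k^{m_k^-}(X_1^*)^{m_1^+}\cdots(X_k^*)^{m_k^+},$$
with $V_1,\dots,V_k$ commuting isometries on some $\cK\supseteq\cE$. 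Comparing with the asserted formula, everything comes down to replacing the commuting isometries $V_i$ by commuting \emph{unitaries} $U_i$ without changing the compressions $A_{(m_1,\dots,m_k)}:=P_\cE(V_1^*)^{m_1^-}\cdots V_k^{m_k^+}|_\cE\in B(\cE)$.

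For the implication that positivity forces the unitary form, I would first evaluate $F$ at the universal model: by the quoted theorem $F(r{\bf S})\geq0$ is a positive $k$-multi-Toeplitz operator on $\bigl(\bigotimes_i F^2(H_1)\bigr)\otimes\cE$, and pairing it against vectors $\sum_{n\in\ZZ_+^k}e_n\otimes v_n$ shows that the $B(\cE)$-valued sequence $\{A_{(m_1,\dots,m_k)}\}_{(m_1,\dots,m_k)\in\ZZ^k}$ is positive definite on the group $\ZZ^k$ (differences $n-n'$ of multi-indices in $\ZZ_+^k$ exhaust $\ZZ^k$). The key step is then to invoke the operator-valued Naimark (Sz.-Nagy--Herglotz) dilation theorem: a positive-definite $B(\cE)$-valued function on $\ZZ^k$ is realized as $A_{(m_1,\dots,m_k)}=P_\cE U_1^{m_1}\cdots U_k^{m_k}|_\cE$ for a $k$-tuple of commuting unitaries $U_1,\dots,U_k$ on a space $\cK\supseteq\cE$ (the coordinate multiplications attached to a positive $B(\cE)$-valued measure on $\TT^k$); note $(U_i^*)^{m_i^-}U_i^{m_i^+}=U_i^{m_i}$ since $m_i=m_i^+-m_i^-$. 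Substituting back into the series and recognizing $\boldsymbol\cP({\bf U},{\bf X})$ gives $F({\bf X})=(P_\cE\otimes I)\boldsymbol\cP({\bf U},{\bf X})|_{\cE\otimes\cH}$. I expect this upgrade to be the main obstacle: a naive unitary extension of the $V_i$ does \emph{not} preserve the two-sided compressions $A_{(m_1,\dots,m_k)}$, since $\cK$ is invariant but not co-invariant and the adjoint factors fail to telescope; routing through positive-definiteness and Naimark's theorem is precisely what avoids this.

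For the converse, suppose $F({\bf X})=(P_\cE\otimes I)\boldsymbol\cP({\bf U},{\bf X})|_{\cE\otimes\cH}$ with the $U_i$ commuting unitaries. That $F(0)=I$ is immediate, as only the $(0,\dots,0)$ term survives at ${\bf X}=0$. Since the $U_i$ and the $X_i$ commute pairwise, the multi-series factors as a product $\boldsymbol\cP({\bf U},{\bf X})=\prod_{i=1}^k\cP_i$ of commuting single-variable kernels $\cP_i=\sum_{m\in\ZZ}(U_i^*)^{m^-}U_i^{m^+}\otimes X_i^{m^-}(X_i^*)^{m^+}$, each norm-convergent because $\|X_i\|<1$. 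Setting $Z_i:=U_i\otimes X_i^*$ (so $\|Z_i\|<1$), a short resummation yields the positive factorization
$$\cP_i=\bigl((I-Z_i)^{-1}\bigr)^*(I-Z_i^*Z_i)(I-Z_i)^{-1}\geq0,$$
and a product of commuting positive operators is positive, so $\boldsymbol\cP({\bf U},{\bf X})\geq0$; compressing by $P_\cE\otimes I$ preserves positivity, giving $F({\bf X})\geq0$. Finally $F$ is free $k$-pluriharmonic because it has exactly the required two-sided power-series form with coefficients $P_\cE U_1^{m_1}\cdots U_k^{m_k}|_\cE$, with norm convergence for ${\bf X}\in{\bf D}^k(\cH)$ following from the same geometric estimate. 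Alternatively, one may simply observe that commuting unitaries are a special case of commuting row isometries when $n_i=1$ and appeal to the sufficiency half of the quoted theorem.
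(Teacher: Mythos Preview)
Your argument is correct, but it takes a genuinely different route from the paper. The paper's proof of the forward implication applies the same structure theorem to obtain commuting isometries $V_1,\dots,V_k$ on $\cG\supset\cE$, then invokes It\^o's theorem to extend them to commuting unitaries $U_1,\dots,U_k$ on $\cK\supset\cG$, with Fuglede's theorem giving double commutativity. You instead extract positive-definiteness of the coefficient function on $\ZZ^k$ and appeal to Naimark's dilation theorem. Both are valid; your route is perhaps more self-contained (it does not require It\^o's result on simultaneous unitary extension of commuting isometries), while the paper's is shorter once those classical theorems are granted.

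Your stated reason for avoiding the ``naive'' route, however, is mistaken: the It\^o extension \emph{does} preserve the compressions $A_{(m_1,\dots,m_k)}$, precisely because in the displayed monomials all positive powers $V_i^{m_i^+}$ sit to the right of all adjoints $(V_j^*)^{m_j^-}$. For $x,y\in\cE\subset\cG$ one has
\[
\big\langle (V_1^*)^{m_1^-}\cdots(V_k^*)^{m_k^-}V_1^{m_1^+}\cdots V_k^{m_k^+}x,\,y\big\rangle
=\big\langle V_1^{m_1^+}\cdots V_k^{m_k^+}x,\,V_k^{m_k^-}\cdots V_1^{m_1^-}y\big\rangle,
\]
and since $\cG$ is $U_i$-invariant with $U_i|_\cG=V_i$, both vectors on the right equal the corresponding $U$-expressions; hence the compression to $\cE$ is unchanged when the $V_i$ are replaced by the $U_i$. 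The obstruction you describe would arise only if adjoints and non-adjoints were interleaved, which they are not here. For the converse, the paper simply cites the positivity of the free pluriharmonic Poisson kernel; your direct factorization works too, but note that the commutation of the factors $\cP_i$ relies on $U_iU_j^*=U_j^*U_i$, which again uses Fuglede's theorem.
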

\begin{proof}

Assume that $F$ is a positive free $k$-pluriharmonic function with $F(0)=I$.
According to the above-mentioned result, there is a $k$-tuple
${\bf V}=(V_1,\ldots, V_k)$ of commuting isometries on a Hilbert space $\cG\supset \cH$ such that

\begin{equation*}
 F({\bf X})= \sum_{m_1\in \ZZ}\cdots \sum_{m_k\in \ZZ} ({ V}_{1}^*)^{m_1^-}\cdots ({V}_{k}^*)^{m_k^-}{V}_{1}^{m_1^+}\cdots ({V}_{k})^{m_k^+}
\otimes { X}_{1}^{m_1^-}\cdots {X}_{k}^{m_k^-}({X}_{1}^*)^{m_1^+}\cdots ({X}_{k}^*)^{m_k^+}
\end{equation*}
for any ${\bf X}=(X_1,\ldots, X_k)\in {\bf D}^k(\cH)$,
where the convergence is in the operator norm topology.
Due to It\^o's theorem (see \cite{SzFBK-book}), there is a $k$-tuple    ${\bf U}=(U_1,\ldots, U_k)$ is of  commuting unitaries on a Hilbert space $\cK\supset \cG$ such that $U_i|_\cG=V_i$ for any $i\in \{1,\ldots, k\}$. Due to Fuglede's theorem (see \cite{Dou}), the unitaries are doubly commuting, i.e. $U_iU_j^*=U_j^*U_i$ for any $i,j\in \{1,\ldots, k\}$. Consequently, we have $
F({\bf X})=(P_\cE\otimes I)\boldsymbol{\cP}({\bf U}, {\bf X})|_{\cE\otimes \cH}.
$
The converse of the theorem is due to Theorem 4.2 from \cite{Po-pluriharmonic-polyball}. The proof is complete.
\end{proof}

Given a completely  bounded linear  map $\mu:\text{\rm span}\{\boldsymbol\cR_{\bf n}^*\boldsymbol\cR_{\bf n}\}\to B(\cE)$, we  introduce   the {\it noncommutative Poisson transform} of $\mu$ to
be the map \ $\boldsymbol\cP\mu : {\bf B}_{\bf n}(\cH)\to B(\cE)\otimes_{min}B(\cH)$
defined by
$$
(\boldsymbol\cP \mu)({\bf X}):=\widehat \mu[\boldsymbol{\cP}({\bf R}, {\bf X})],\qquad
{\bf X} \in {\bf B}_{\bf n}(\cH),
$$
 where  the  completely bounded linear map
 $$\widehat
\mu:=\mu\otimes \text{\rm id} : \text{\rm span}\{\boldsymbol\cR_{\bf n}^*\boldsymbol\cR_{\bf n}\}^{-\|\cdot \|} \otimes_{min} B(\cH)\to
B(\cE)\otimes_{min} B(\cH)
$$
is uniquely defined by  $ \widehat \mu(A\otimes Y):= \mu(A)\otimes Y$ for any $A\in
\text{\rm span}\{\boldsymbol\cR_{\bf n}^*\boldsymbol\cR_{\bf n}\} $ and  $Y\in B(\cH)$.

  Using Corollary 4.5 from \cite{Po-pluriharmonic-polyball} and Proposition \ref{polydisk},  we obtain the following   structure theorem for  positive  $k$-harmonic functions on the regular polydisk  ${\bf D}^k(\cH)$,
    which extends the corresponding  classical result in scalar polydisks \cite{Ru1}.

\begin{theorem}\label{cp} Let $F:{\bf D}^k(\cH)\to B(\cE)\otimes_{min} B(\cH)$ be  a free $k$-pluriharmonic function. Then the following statements are equivalent:
\begin{enumerate}
\item[(i)]
  $F$ is positive;
\item[(ii)]
there exists a completely positive linear map $\mu:C^*({\bf R})\to B(\cE)$ such that  $F=\boldsymbol\cP\mu$;

\item[(iii)]
there exists a
$k$-tuple   ${\bf U}=(U_1,\ldots, U_k)$  of commuting unitaries acting on  a Hilbert space $\cK\supset \cE$  and a bounded operator $W:\cE\to \cK$ such  that
$$
F({\bf X})=  (W^*\otimes I)\left[ C_{\bf X}({\bf U})^*
C_{\bf X}({\bf U}) \right](W\otimes I),
$$
\end{enumerate}
where $$C_{\bf X}({\bf U}):=(I\otimes \boldsymbol\Delta_{\bf X}(I)^{1/2})\prod_{i=1}^k(I-U_{i}\otimes
X_{i}^*), \qquad {\bf X}=(X_1,\ldots, X_k)\in {\bf D}^k(\cH).$$
 \end{theorem}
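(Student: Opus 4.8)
The plan is to establish $(i)\Leftrightarrow(ii)$ directly from the general theory and then close the loop with the implications $(ii)\implies(iii)\implies(i)$. The equivalence $(i)\Leftrightarrow(ii)$ is the noncommutative Poisson-transform description of positive free $k$-pluriharmonic functions: Corollary 4.5 of \cite{Po-pluriharmonic-polyball} asserts that a free $k$-pluriharmonic $F$ on a regular polyball is positive if and only if $F=\boldsymbol\cP\mu$ for a completely positive map $\mu\colon C^*({\bf R})\to B(\cE)$, so specializing to ${\bf n}=(1,\ldots,1)$ gives $(i)\Leftrightarrow(ii)$ and I would simply invoke it. Evaluating at ${\bf X}=0$ forces $\mu(I)=F(0)$, so $\mu$ need not be unital and $F(0)$ need not be invertible; this is exactly the generality that the operator $W$ in $(iii)$ is designed to absorb.

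For $(ii)\implies(iii)$ I would dilate. Given $\mu\colon C^*({\bf R})\to B(\cE)$ completely positive, Stinespring's theorem yields a Hilbert space $\cK$, a unital $*$-representation $\rho\colon C^*({\bf R})\to B(\cK)$, and a bounded $W_0\colon\cE\to\cK$ with $\mu=W_0^*\rho(\cdot)W_0$ and $W_0^*W_0=\mu(I)=F(0)$. Since the $R_i$ are commuting isometries, so are the $T_i:=\rho(R_i)$; It\^o's theorem (as in the proof of Proposition \ref{polydisk}, via \cite{SzFBK-book}) extends them to commuting unitaries ${\bf U}=(U_1,\ldots,U_k)$ on a space $\cK'\supset\cK$ with $U_i|_\cK=T_i$ and $\cK$ invariant under each $U_i$, and Fuglede's theorem \cite{Dou} makes them doubly commuting. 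Applying $\widehat\mu=(W_0^*\otimes I)(\rho\otimes\text{id})(\cdot)(W_0\otimes I)$ to the Poisson kernel and using norm-continuity to substitute $R_i\mapsto T_i$ termwise gives $F({\bf X})=(W_0^*\otimes I)\,\boldsymbol{\cP}({\bf T},{\bf X})\,(W_0\otimes I)$, where $\boldsymbol{\cP}({\bf T},{\bf X})$ is the isometric Poisson kernel. The decisive step is the compression identity $\boldsymbol{\cP}({\bf T},{\bf X})=(P_\cK\otimes I)\,\boldsymbol{\cP}({\bf U},{\bf X})\,(P_\cK\otimes I)|_{\cK\otimes\cH}$: because each $U_i$ leaves $\cK$ invariant, $\cK^\perp$ is invariant under every $U_i^*$, whence $P_\cK U_i^{\,m}|_\cK=T_i^{\,m}$ and $P_\cK(U_i^*)^{m}|_\cK=(T_i^*)^{m}$, and the mixed monomials $(U_1^*)^{m_1^-}\cdots U_1^{m_1^+}\cdots$ then compress to the corresponding products of the $T_i$. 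Setting $W:=J W_0$ with $J\colon\cK\hookrightarrow\cK'$ the inclusion, this yields $F({\bf X})=(W^*\otimes I)\,\boldsymbol{\cP}({\bf U},{\bf X})\,(W\otimes I)$, and the factorization $\boldsymbol{\cP}({\bf U},{\bf X})=C_{\bf X}({\bf U})^*C_{\bf X}({\bf U})$ (Theorem 4.2 of \cite{Po-pluriharmonic-polyball}, unitary case) produces exactly the expression in $(iii)$.

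For $(iii)\implies(i)$ the argument is immediate: if $F$ has the stated form, then $F({\bf X})=\bigl[C_{\bf X}({\bf U})(W\otimes I)\bigr]^*\bigl[C_{\bf X}({\bf U})(W\otimes I)\bigr]\ge 0$ for every ${\bf X}\in{\bf D}^k(\cH)$, so $F$ is positive; that $F$ is genuinely free $k$-pluriharmonic follows by expanding $C_{\bf X}({\bf U})^*C_{\bf X}({\bf U})=\boldsymbol{\cP}({\bf U},{\bf X})$ into its norm-convergent multi-series in the monomials $X_1^{m_1^-}\cdots(X_k^*)^{m_k^+}$ and compressing by $W$, which reproduces the defining form of a free $k$-pluriharmonic function.

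The main obstacle is the passage from the isometric Poisson kernel to the unitary one in $(ii)\implies(iii)$, that is, proving the compression identity and verifying that the projections inserted by the Stinespring dilation do not disturb the compressed monomials. Everything hinges on the invariance of $\cK$ under the $U_i$ (equivalently of $\cK^\perp$ under the $U_i^*$), which forces $P_\cK(U_i^*)^m|_\cK=(T_i^*)^m$; the doubly-commuting structure then lets the one-variable computation propagate across the $k$ factors. Since this is precisely the mechanism already deployed in Proposition \ref{polydisk}, the remaining work is bookkeeping rather than a genuinely new idea.
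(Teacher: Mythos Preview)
Your argument for $(ii)\Rightarrow(iii)$ via Stinespring followed by It\^o's extension, together with the compression identity for the monomials $(U_1^*)^{m_1^-}\cdots U_k^{m_k^+}$, is correct and matches what the paper does through Proposition~\ref{polydisk}; the invariance of $\cK$ under each $U_i$ is exactly what makes the adjoint-first, then non-adjoint ordering compress cleanly, as you check. Likewise $(iii)\Rightarrow(i)$ is immediate.

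The gap is in $(i)\Rightarrow(ii)$. You write that Corollary~4.5 of \cite{Po-pluriharmonic-polyball} already gives this equivalence in general and that you would ``simply invoke it.'' But as the paper's own proof makes explicit, that corollary is stated under the normalization $F(0)=I$; the structure theorem quoted immediately before Proposition~\ref{polydisk} carries the same hypothesis. When $F(0)=A$ is merely positive (possibly non-invertible), you cannot conjugate by $A^{-1/2}$ to normalize. The paper closes this gap by an $\epsilon$-regularization: set
\[
G_\epsilon:=\bigl[(A+\epsilon I)^{-1/2}\otimes I\bigr](F+\epsilon I)\bigl[(A+\epsilon I)^{-1/2}\otimes I\bigr],
\]
apply Corollary~4.5 to the unital $G_\epsilon$ to obtain $\mu_\epsilon$, conjugate back to $\nu_\epsilon:=(A+\epsilon I)^{1/2}\mu_\epsilon(\cdot)(A+\epsilon I)^{1/2}$, observe that $\nu_\epsilon$ agrees with the desired $\mu$ on all nontrivial monomials and differs only by $\epsilon\langle g(1),1\rangle I$, and let $\epsilon\to 0$. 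This limiting argument is the one genuinely new ingredient in the proof, and your proposal omits it. Once you insert this step, your outline becomes complete and coincides with the paper's route.
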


\begin{proof} In the particular case when $F(0)=I$, the implication (i)$\implies $(ii) is due to Corollary 4.5 from \cite{Po-pluriharmonic-polyball}.
Now, we consider the general case when $F:{\bf D}^k(\cH)\to B(\cE)\otimes_{min} B(\cH)$ is  an arbitrary positive  free $k$-pluriharmonic function
of the form
\begin{equation*}
  \sum_{m_1\in \ZZ}\cdots \sum_{m_k\in \ZZ} A_{{m_1^-}\cdots {m_k^-};{m_1^+}\cdots {m_k^+}}\otimes { X}_{1}^{m_1^-}\cdots {X}_{k}^{m_k^-}
({X}_{1}^*)^{m_1^+}\cdots ({X}_{k}^*)^{m_k^+}
\end{equation*}
for any ${\bf X}=(X_1,\ldots, X_k)\in {\bf D}^k(\cH)$,
where the convergence is in the operator norm topology.
For each $\epsilon >0$, set
$$
G_\epsilon:=\left[(A+\epsilon I_\cE)^{-1/2}\otimes I\right](F+\epsilon I_\cE\otimes I) \left[(A+\epsilon I_\cE)^{-1/2}\otimes I\right],
$$
where $A\otimes I:=F(0)$ with $A\geq 0$.
Since $G_\epsilon$ is a positive free $k$-pluriharmonic function on ${\bf D}^k(\cH)$ with $G_\epsilon(0)=I$, we can apply Corollary 4.5 from
\cite{Po-pluriharmonic-polyball} and a find a completely positive linear map $\mu_\epsilon:C^*({\bf R})\to B(\cE)$ such that
$$
\mu_\epsilon\left(({\bf R}_{1}^*)^{m_1^-}\cdots ({\bf R}_{k}^*)^{m_k^-}{\bf R}_{1}^{m_1^+}\cdots {\bf R}_{k}^{m_k^+}\right)=
(A+\epsilon I_\cE)^{-1/2} A_{{m_1^-}\cdots {m_k^-};{m_1^+}\cdots {m_k^+}} (A+\epsilon I_\cE)^{-1/2}.
$$
Define the completely positive linear map $\nu_\epsilon:C^*({\bf R})\to B(\cE)$ by setting
$$
\nu_\epsilon (g):=(A+\epsilon I_\cE)^{1/2}  \mu_\epsilon(g)(A+\epsilon I_\cE)^{1/2}, \qquad g\in C^*({\bf R}).
$$
Note that $
\nu_\epsilon\left(({\bf R}_{1}^*)^{m_1^-}\cdots ({\bf R}_{k}^*)^{m_k^-}{\bf R}_{1}^{m_1^+}\cdots {\bf R}_{k}^{m_k^+}\right)=
 A_{{m_1^-}\cdots {m_k^-};{m_1^+}\cdots {m_k^+}}
$ if $(m_1,\ldots, m_k)\neq (0,\ldots, 0)$, and $\nu_\epsilon(I)=A+\epsilon I_\cE$.
Define $\mu:C^*({\bf R})\to B(\cE)$ by setting
$$
\mu\left(({\bf R}_{1}^*)^{m_1^-}\cdots ({\bf R}_{k}^*)^{m_k^-}{\bf R}_{1}^{m_1^+}\cdots {\bf R}_{k}^{m_k^+}\right)=
 A_{{m_1^-}\cdots {m_k^-};{m_1^+}\cdots {m_k^+}}
$$
 if $(m_1,\ldots, m_k)\neq (0,\ldots, 0)$, and $\mu(I)=A$.
It is clear that $\nu_\epsilon (g)=\mu(g)+\epsilon \left<g(1),1\right> I$ for any $g\in C^*({\bf R})$, and $\nu_\epsilon (g)\to \mu(g)$ as $\epsilon\to 0$.
Therefore, $\mu$ is a completely  positive linear map and
$F=\boldsymbol\cP\mu$.
Using  Corollary 4.5 from \cite{Po-pluriharmonic-polyball} and Proposition \ref{polydisk}, one deduce the implications (ii)$\implies$(iii) and (iii)$\implies$ (i). The proof is complete.
\end{proof}

We recall \cite{JP} that the Kobayashi distance for the polydisc $\DD^k$ is given by
$$
K_{\DD^k}({\bf z}, {\bf w})=\frac{1}{2}\ln \frac{1+\|\psi_{\bf z}({\bf w})\|_\infty}{1-\|\psi_{\bf z}({\bf w})\|_\infty},
$$
 where   $\psi_{\bf z}$  is the involutive automorphisms of $\DD^k$  given by
 $$
 \psi_{\bf z}=\left(\frac{w_1-z_1}{1-\bar z_1 w_1}, \ldots, \frac{w_k-z_k}{1-\bar z_k w_k}\right)
 $$
for any ${\bf z}=(z_1,\ldots, z_k)$ and ${\bf w}=(w_1,\ldots, w_k)$   in $\DD^k$.  
\begin{theorem}\label{disk}
 Let ${\bf D}^k(\cH)$ be the regular polydisk. The following statements hold.
\begin{enumerate}
\item[(i)]  If ${\bf A}, {\bf B}\in {\bf D}^k(\cH)^-$, then ${\bf A}\,\overset{H}{\sim}\, {\bf B}$ if and only if ${\bf A}\,\overset{P}{\sim}\, {\bf B}$.
\item[(ii)]
The metrics $\delta_H$ and $\delta_\cP$ coincide on the Harnack parts of ${\bf D}^k(\cH)^-$.
\item[(iii)] If ${\bf A}$ and ${\bf B}$ are   in
 ${\bf D}^k(\cH)^-$ and ${\bf A}\,\overset{H}{\sim}\, {\bf B}$, then
$$
\delta_H({\bf A}, {\bf B})=\delta_H(\boldsymbol\Psi({\bf A}), \boldsymbol\Psi({\bf B})), \qquad \boldsymbol\Psi\in
Aut({\bf D}^k).
$$
\item[(iv)] If ${\bf A}$ and ${\bf B}$ are   in
 ${\bf D}^k(\cH)$, then
$$
\delta_H({\bf A}, {\bf B})=\ln \max \left\{\left\|C_{\bf A}({\bf R})C_{\bf B}({\bf R})^{-1}\right\|, \left\|C_{\bf B}({\bf R})C_{\bf A}({\bf R})^{-1}\right\|\right\},
$$
where
$$C_{\bf X}({\bf R}):=(I\otimes \boldsymbol\Delta_{\bf X}(I)^{1/2})\prod_{i=1}^k(I-R_{i}\otimes
X_{i}^*), \qquad {\bf X}=(X_1,\ldots, X_k)\in {\bf D}^k(\cH). $$
\item[(v)]
$\delta_H|_{\DD^k\times \DD^k}$ is equivalent to the  Kobayashi distance on the polydisk $\DD^k$ and
$$\delta_H({\bf z}, {\bf w})=\frac{1}{2} \ln \frac{\prod_{i=1}^k\left(1+|\psi_{z_i}(w_i)|\right)}
{\prod_{i=1}^k\left(1-|\psi_{z_i}(w_i)|\right)}
$$
for any ${\bf z}=(z_1,\ldots, z_k)$ and ${\bf w}=(w_1,\ldots, w_k)$ in $\DD^k$, where $\psi_{\bf z}:=(\psi_{z_1},\ldots, \psi_{z_n})$ is the involutive automorphisms of $\DD^k$ such that
$\psi_{z_i}(0)=z_i$ and $\psi_{z_i}(z_i)=0$.
\item[(vi)] The hyperbolic metric $\delta_H$ is complete on the Harnack parts of ${\bf D}^k(\cH)_0^-$.
\item[(vii)]
The $\delta_H$-topology coincides with the operator norm topology on the regular polydisk ${\bf D}^k(\cH)$.
\end{enumerate}
\end{theorem}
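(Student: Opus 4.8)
The plan is to treat parts (i) and (ii) as the crux, since they encode the only genuinely polydisk-specific phenomenon, and then to obtain (iii)--(vii) by specializing the general results of Sections~2 and~3 together with one explicit Cauchy-kernel computation. The decisive tool is Theorem~\ref{cp}: on the regular polydisk \emph{every} positive free $k$-pluriharmonic function $F$ is the Poisson transform $F=\boldsymbol\cP\mu$ of a completely positive map $\mu\colon C^*({\bf R})\to B(\cE)$, so that $F(r{\bf X})=\widehat\mu[\boldsymbol{\cP}({\bf R},r{\bf X})]$ for every $r\in[0,1)$.

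For part (i), the implication ${\bf A}\overset{H}{\sim}\,{\bf B}\Rightarrow{\bf A}\overset{P}{\sim}\,{\bf B}$ is immediate, since ${\bf X}\mapsto\boldsymbol{\cP}({\bf R},{\bf X})$ is itself a positive free $k$-pluriharmonic function. For the converse I would fix $c\ge 1$ with ${\bf A}\overset{P}{{\underset{c}\prec}}\,{\bf B}$, that is $\boldsymbol{\cP}({\bf R},r{\bf A})\le c^2\boldsymbol{\cP}({\bf R},r{\bf B})$ for all $r$, and apply the completely positive, hence order-preserving, map $\widehat\mu=\mu\otimes\mathrm{id}$ attached by Theorem~\ref{cp} to an arbitrary positive free $k$-pluriharmonic $F$. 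This gives $F(r{\bf A})=\widehat\mu[\boldsymbol{\cP}({\bf R},r{\bf A})]\le c^2\widehat\mu[\boldsymbol{\cP}({\bf R},r{\bf B})]=c^2F(r{\bf B})$, so ${\bf A}\overset{H}{{\underset{c}\prec}}\,{\bf B}$ with the \emph{same} constant $c$. Symmetrizing yields ${\bf A}\overset{P}{{\underset{c}\sim}}\,{\bf B}\Leftrightarrow{\bf A}\overset{H}{{\underset{c}\sim}}\,{\bf B}$; in particular $\omega_H=\omega_\cP$ wherever defined, which proves (i) and, after taking logarithms, (ii).

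Parts (iii), (iv), (vi), (vii) then follow by specialization to ${\bf n}=(1,\dots,1)$. Statement (iii) is the invariance Theorem~\ref{formula} in this case. Statement (iv) is the formula of Theorem~\ref{dp} for $\delta_\cP$, transported to $\delta_H$ by (ii), with each $C_{\bf X}({\bf R})$ simplified since $n_i=1$. For (vi) I would combine (i)--(ii) with Theorem~\ref{dp-dh}(i): on the polydisk the Harnack and Poisson parts of ${\bf D}^k(\cH)_0^-$ coincide and $\delta_H=\delta_\cP$ there, so completeness of $\delta_\cP$ transfers to $\delta_H$. Likewise (vii) is (ii) together with Theorem~\ref{dp-dh}(iii).

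The remaining content is the explicit evaluation in (v). Using the invariance (iii) and $\psi_{\bf z}({\bf z})=0$, I would reduce to $\delta_H({\bf z},{\bf w})=\delta_H(0,{\bf u})$ where ${\bf u}=(\psi_{z_1}(w_1),\dots,\psi_{z_k}(w_k))$, and then apply (iv). Here $C_0({\bf R})=I$, while for scalar ${\bf u}$ one has $\boldsymbol\Delta_{\bf u}(I)=\prod_i(1-|u_i|^2)I$ and $C_{\bf u}({\bf R})=\bigl(\prod_i(1-|u_i|^2)\bigr)^{1/2}\prod_i(I-\bar u_iR_i)^{-1}$. Realizing $R_i$ as the coordinate shifts on $H^2(\DD^k)=H^2(\DD)^{\otimes k}$, the factor $I-\bar u_iR_i$ is multiplication by $1-\bar u_iz_i$, so $\|I-\bar u_iR_i\|=1+|u_i|$ and $\|(I-\bar u_iR_i)^{-1}\|=(1-|u_i|)^{-1}$; the tensor structure then gives $\|C_{\bf u}({\bf R})\|=\|C_{\bf u}({\bf R})^{-1}\|=\prod_i\bigl(\tfrac{1+|u_i|}{1-|u_i|}\bigr)^{1/2}$, whence $\delta_H(0,{\bf u})=\tfrac12\ln\prod_i\tfrac{1+|u_i|}{1-|u_i|}$, which is the asserted formula. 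Finally, with $t_i=|\psi_{z_i}(w_i)|$ and $t=\max_i t_i$, monotonicity of $s\mapsto\frac{1+s}{1-s}$ yields $\frac{1+t}{1-t}\le\prod_i\frac{1+t_i}{1-t_i}\le\bigl(\frac{1+t}{1-t}\bigr)^k$, i.e. $K_{\DD^k}\le\delta_H\le k\,K_{\DD^k}$, giving the equivalence with the Kobayashi distance. The main obstacle is the converse in part (i): this is precisely where the commuting unitary dilation underlying Theorem~\ref{cp} (It\^o together with Fuglede) is indispensable, and since no comparable representation is known on a general polyball, it is exactly this step that does not extend.
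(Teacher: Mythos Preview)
Your proposal is correct and follows essentially the same strategy as the paper: you derive (i)--(ii) from Theorem~\ref{cp} by applying the completely positive Poisson transform $\widehat\mu$ to the Poisson-kernel inequality (with the same constant), and you obtain (iii), (iv), (vi), (vii) by specializing Theorems~\ref{formula}, \ref{dp}, and \ref{dp-dh} exactly as the paper does.

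The only mild difference is in the computation for (v). The paper bounds $\|C_{\bf z}({\bf R})\|$ from above by $\prod_i\bigl(\tfrac{1+|z_i|}{1-|z_i|}\bigr)^{1/2}$ and then computes $\|\prod_i(I-\bar z_iR_i)\|=\prod_i(1+|z_i|)$ via the von Neumann inequality for polyballs together with the Riesz representation, obtaining equality only for $\|C_{\bf z}({\bf R})^{-1}\|$ and then taking the maximum. You instead exploit the tensor factorization $\prod_i(I-\bar u_iR_i)^{\pm1}=\bigotimes_i(I-\bar u_iR_i)^{\pm1}$ on $H^2(\DD)^{\otimes k}$ and multiplicativity of the norm on tensor products to get equality for both $\|C_{\bf u}({\bf R})\|$ and $\|C_{\bf u}({\bf R})^{-1}\|$ directly; this is a slightly cleaner route to the same formula, but not a substantively different argument.
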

\begin{proof} Let ${\bf A}, {\bf B}\in {\bf D}^k(\cH)^-$ and recall that the map  ${\bf X}\mapsto \boldsymbol{\cP}({\bf R}, {\bf X})$ is a positive free $k$-pluriharmonic function on ${\bf D}^k(\cH)$. Consequently, if
${\bf A}\overset{H}{{\underset{c}\prec}}\, {\bf B}$, then ${\bf A}\overset{P}{{\underset{c}\prec}}\, {\bf B}$. To prove the converse, assume that ${\bf A}\overset{P}{{\underset{c}\prec}}\, {\bf B}$. Then we have
\begin{equation}
\label{pcp}
\boldsymbol{\cP}({\bf R}, r{\bf A})\leq c^2 \boldsymbol{\cP}({\bf R}, r{\bf B})
\end{equation}
 for any $r\in [0,1)$. Let $F:{\bf D}^k(\cH)\to B(\cE)\otimes_{min} B(\cH)$ be  an arbitrary positive  free $k$-pluriharmonic function with coefficients in $B(\cE)$. According to Theorem \ref{cp}, there exists a completely positive linear map $\mu:C^*({\bf R})\to B(\cE)$ such that  $F({\bf X})=(\boldsymbol\cP\mu)({\bf X})=(\mu\otimes id)[\boldsymbol{\cP}({\bf R}, {\bf X})]$ for any ${\bf X}\in {\bf D}^k(\cH)$. Consequently, using
 relation \eqref{pcp}, we deduce that $F(r{\bf A})\leq c^2F(r{\bf B})$
 for any $r\in [0,1)$. This shows that ${\bf A}\overset{H}{{\underset{c}\prec}}\, {\bf B}$ and  completes the proof of item (i). As a consequence, we deduce that
 the Harnack parts of ${\bf D}^k(\cH)^-$ coincide with the Poisson parts. Moreover, since
${\bf A}\overset{H}{{\underset{c}\sim}}\, {\bf B}$ if and only if ${\bf A}\overset{P}{{\underset{c}\sim}}\, {\bf B}$, part (ii) holds.
Note that part (iii)  is a particular case of Theorem \ref{formula}, while part (iv) is a particular case of Theorem \ref{dp}. On the other hand, the items (vi) and (vii) are due to part (i) and Theorem \ref{dp-dh}.

It remains to  prove part (v). Due to part (iv), we have
\begin{equation}\label{deh}
\delta_H({\bf z},0)=\ln \max \left\{ \|C_{\bf z}({\bf R})\|, \|C_{\bf z}({\bf R})^{-1}\|\right\},
\end{equation}
where  $$C_{\bf z}({\bf R})=\prod_{i=1}^k (1-|z_i|^2)^{1/2} \prod_{i=1}^k (I-\bar z_i R_i)^{-1}.
$$
Since $\left\|(I-\bar z_i R_i)^{-1}\right\|\leq \frac{1}{1-|z_i|}$, we deduce that
\begin{equation}\label{Cz}
\|C_{\bf z}({\bf R})\|\leq \prod_{i=1}^k\left(\frac{1+|z_i|}{1-|z_i|}\right)^{1/2}.
\end{equation}
Now, we calculate $ \|C_{\bf z}({\bf R})^{-1}\|$. First, note that
$$
\left\|\prod_{i=1}^k(I-\bar z_i R_i)\right\|\leq \prod_{i=1}^k (1+|z_i|).
$$
Due to Riesz representation theorem we have
$$\
\sup_{(w_1,\ldots, w_k)\in \DD^k}\left|\prod_{i=1}^k (1+\bar z_iw_i)\right|=
\prod_{i=1}^k(1+ |z_i| ).
$$
The von Neumann inequality for regular polyballs (see \cite{Po-Berezin-poly}) implies
$$
\left|\prod_{i=1}^k (1+\bar z_iw_i)\right|\leq \left\|\prod_{i=1}^k(I-\bar z_i R_i)\right\|.
$$
Now, combining these relations, we deduce that
$$
\left\|\prod_{i=1}^k(I-\bar z_i R_i)\right\|=\prod_{i=1}^k (1+|z_i|),
$$
which implies
$$
 \|C_{\bf z}({\bf R})^{-1}\|=\prod_{i=1}^k\left(\frac{1+|z_i|}{1-|z_i|}\right)^{1/2}.
$$
Hence and using relations \eqref{deh} and \eqref{Cz}, we obtain
\begin{equation}
\label{dhz}
\delta_H({\bf z},0)=\frac{1}{2} \ln \prod_{i=1}^k\frac{1+|z_i|}{1-|z_i|}.
\end{equation}
Now, let $\psi_{\bf z}:=(\psi_{z_1},\ldots, \psi_{z_n})$  be the involutive automorphism of $\DD^k$ such that
$\psi_{z_i}(0)=z_i$ and $\psi_{z_i}(z_i)=0$.
Using part (iii) and relation \eqref{dhz}, we deduce that
\begin{equation*}
\begin{split}
\delta_H({\bf z}, {\bf w})&=\delta_H(\psi_{\bf z}({\bf z}), \psi_{\bf z}({\bf w}))
 =\delta_H(0, \psi_{\bf z}({\bf w}))\\
 &=\frac{1}{2} \ln \frac{\prod_{i=1}^k\left(1+|\psi_{z_i}(w_i)|\right)}
{\prod_{i=1}^k\left(1-|\psi_{z_i}(w_i)|\right)}=\sum_{i=1}^k \delta_{\DD}(z_i,w_i),
\end{split}
\end{equation*}
where $\delta_{\DD}$ is the Poincar\' e distance on the open disk $\DD$.
Since the function  $t\mapsto \ln \frac{1+t}{1-t}$ is increasing on $[0,1)$, we have
$$
\max_{i=1,\ldots, k} \frac{1}{2} \ln \frac{1+|\psi_{z_i}(w_i)|}{1-|\psi_{z_i}(w_i)|}
=\frac{1}{2} \ln \frac{1+\|\psi_{\bf z}({\bf w})\|_\infty}{1-|\psi_{\bf z}({\bf w})\|_\infty},
$$
which is the Kobayashi distance for the polydisc (see \cite{Ko1}). Consequently, $\delta_H|_{\DD^k\times \DD^k}$ is equivalent to the  Kobayashi distance on the polydisk $\DD^k$.  This completes the proof of part
(v).
\end{proof}

\begin{corollary}
Let $f=(f_1,\ldots, f_m):{\bf D}^k(\cH)\to   [B(\cH)^m]_1$  be a free holomorphic function on the regular polydisk.
If ${\bf X}, {\bf Y}\in {\bf D}^k(\cH)$,
then
$$
\delta_H(f({\bf X}), f({\bf Y}))\leq \delta_H({\bf X}, {\bf Y}),
$$
where $\delta_H$ is the hyperbolic  metric.    In particular, if $f(0)=0$, then
$$
\frac{1+\|f({\bf z})\|_2}{1-\|f({\bf z})\|_2}\leq \prod_{i=1}^k \frac{1+|z_i|}{1-|z_i|}
$$
for any ${\bf z}=(z_1,\ldots, z_k)$  in $\DD^k$.
\end{corollary}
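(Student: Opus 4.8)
The first assertion is a direct specialization of the Schwarz--Pick lemma already established. The plan is to observe that the regular polydisk is exactly the regular polyball associated with the multi-index $(1,\ldots,1)$, i.e. ${\bf D}^k(\cH)={\bf B}_{(1,\ldots,1)}(\cH)$, and that $f=(f_1,\ldots,f_m):{\bf D}^k(\cH)\to[B(\cH)^m]_1$ is a free holomorphic function on this polyball with values in the unit ball. Consequently Proposition \ref{SP1} applies verbatim and yields both $f({\bf X})\overset{H}{\sim} f({\bf Y})$ and the inequality $\delta_H(f({\bf X}),f({\bf Y}))\le\delta_H({\bf X},{\bf Y})$ for all ${\bf X},{\bf Y}\in{\bf D}^k(\cH)$. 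No further work is needed for this part.

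For the \emph{in particular} statement, the plan is to specialize the inequality to the pair ${\bf X}={\bf z}\in\DD^k$ and ${\bf Y}=0$ and to use the hypothesis $f(0)=0$. This gives $\delta_H(f({\bf z}),0)\le\delta_H({\bf z},0)$, where the left-hand distance is the hyperbolic metric on $[B(\cH)^m]_1^-$ and the right-hand distance is that on ${\bf D}^k(\cH)^-$. I would then evaluate each side explicitly. For the right-hand side I invoke the formula of Theorem \ref{disk}(v) with ${\bf w}=0$: since $\psi_{z_i}(0)=z_i$, this yields $\delta_H({\bf z},0)=\frac12\ln\prod_{i=1}^k\frac{1+|z_i|}{1-|z_i|}$ (equivalently relation \eqref{dhz}).

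For the left-hand side I use the corresponding formula for the hyperbolic metric on the noncommutative ball restricted to scalar points, established in \cite{Po-hyperbolic}, namely $\delta_H({\bf w},0)=\frac12\ln\frac{1+\|{\bf w}\|_2}{1-\|{\bf w}\|_2}$ for ${\bf w}\in\BB_m$; applying this with ${\bf w}=f({\bf z})\in\BB_m$ supplies the left-hand side. Substituting both evaluations into $\delta_H(f({\bf z}),0)\le\delta_H({\bf z},0)$, multiplying by $2$, and exponentiating produces exactly $\frac{1+\|f({\bf z})\|_2}{1-\|f({\bf z})\|_2}\le\prod_{i=1}^k\frac{1+|z_i|}{1-|z_i|}$.

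I expect the only genuine point requiring care to be the identification of the scalar restriction $\delta_H(\cdot,0)$ of the ball metric on $[B(\cH)^m]_1$ with the Poincar\'e--Bergman expression $\frac12\ln\frac{1+\|{\bf w}\|_2}{1-\|{\bf w}\|_2}$: this is the radial $\ell^2$ formula, not the product formula that governs the polydisk, so one must quote the ball version (the $k=1$ case) from \cite{Po-hyperbolic} rather than Theorem \ref{disk}. Everything else is a mechanical substitution and exponentiation.
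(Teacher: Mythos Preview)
Your proof is correct and follows essentially the same approach as the paper: the first assertion is obtained from Proposition \ref{SP1}, and the second by specializing to ${\bf X}={\bf z}$, ${\bf Y}=0$, computing $\delta_H({\bf z},0)$ via Theorem \ref{disk}(v) and $\delta_H(f({\bf z}),0)$ via the scalar ball formula from \cite{Po-hyperbolic}. The paper additionally cites Theorem \ref{disk}(ii), but your direct use of part (v) (relation \eqref{dhz}) is equivalent, and your explicit attribution of the ball formula to \cite{Po-hyperbolic} is appropriate.
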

\begin{proof} The first part of this corollary is due to Proposition \ref{SP1}. To prove the second part, we use Theorem \ref{disk} part (ii) and (v).
Note that
$$\delta_H({\bf z}, 0)=\frac{1}{2}\ln \prod_{i=1}^k \frac{1+|z_i|}{1-|z_i|}\quad
\text{ and } \quad
\delta_H(f({\bf z}),0)= \frac{1}{2}\ln \frac{1+\|f({\bf z})\|_2}{1-\|f({\bf z})\|_2}.
$$
Since $\delta_H(f({\bf z}),0)\leq \delta_H({\bf z}, 0)$,  one can complete the proof.
\end{proof}

\bigskip

       %

      \end{document}